\renewcommand{\qedsymbol}{\hfill{\vrule height7pt width6pt depth-1pt}}
\newcommand{\bbA}{\mathbb{A}}
\newcommand{\bbN}{\mathbb{N}}
\newcommand{\bbZ}{\mathbb{Z}}
\newcommand{\cO}{\mathcal{O}}
\newcommand{\fkm}{\mathfrak{m}}
\newcommand{\End}{\operatorname{End}}
\newcommand{\Aut}{\operatorname{Aut}}
\newcommand{\ann}{\operatorname{ann}}
\newcommand{\id}{\operatorname{id}}
\newcommand{\Inn}{\operatorname{Inn}}
\newcommand{\define}[1]{\textbf{#1\index{#1}}}
\newcommand{\iso}{\cong}
\newcommand{\sub}{\subseteq}
\newcommand{\notsub}{\not\subseteq}
\newcommand{\normal}{\triangleleft}
\newcommand{\compose}{\circ}
\newcommand{\cp}{\compose}
\newcommand{\Th}{^{\text{th}}}
\theoremstyle{plain}
\newtheorem{thm}{Theorem}[section]
\newtheorem{lem}[thm]{Lemma}
\newtheorem{cor}[thm]{Corollary}
\newtheorem{prop}[thm]{Proposition}
\theoremstyle{definition}
\newtheorem{ex}[thm]{Example}
\theoremstyle{plain}
\theoremstyle{definition}
\newtheorem*{remx}{Remark}
\newcommand{\ssa}[2]{{#2}^{#1}}
\newcommand{\ssaa}[3]{({#3}^{#2})^{#1}}
\newcommand{\sss}{$\sigma$-stable}
\newcommand{\GGs}{$G$-stable}
\newcommand{\ssi}{$\sigma$-invariant}
\newcommand{\GGi}{$G$-invariant}
\newcommand{\hasacc}{satisfies the a.c.c.\ on ideals}
\newcommand{\psurj}{$\sigma$ is pointwise surjective}
\newcommand{\revsh}{the reversibility condition}
\newcommand{\Revsh}{The reversibility condition}
\newcommand{\rev}{$(G,\sigma)$ is reversible up to conjugation}
\newcommand{\abs}[1]{\lvert #1\rvert}
\newcommand{\Ass}{\operatorname{Ass}}
\newcommand{\st}[2]{(#1:#2)}
\newcommand{\smin}{\setminus}
\newcommand{\smid}{\mid}
\newcommand{\upm}{unique product monoid}
\newcommand{\upg}{unique product group}
\newcommand{\skewm}[2]{#1*#2}
\newcommand{\basis}[1]{\mspace{1mu}\overline{#1\mspace{2mu}}\mspace{1mu}}
\begin{document}

\title[Associated primes of induced modules]%
{Prime Modules and Associated Primes of Induced Modules over Rings
Graded by Unique Product Monoids}

\author{Allen D. Bell}

\address{Dept.~of Mathematical Sciences, University of Wisconsin--Milwaukee}

\email{adbell@uwm.edu}

\subjclass[2010]{Primary 16S36, 16N60, 16S35, 16W50}

\keywords{Graded ring, associated primes, unique product, %
prime ideal, prime module, skew polynomial ring, crossed product, %
strongly graded}

\begin{abstract}
We study prime ideals, prime modules, and associated primes of graded
modules over rings $S$ graded by a \upm. We consider two situations in
detail: (a) the case where $S$ is strongly group-graded and (b) the case
where $S$ is a crossed product and the ideal or module is induced from the
identity component $R$ of $S$. We give explicit conditions for ideals and
modules of $R$ to induce prime ideals of or prime modules over $S$ in
these two cases. We then describe the set of associated prime ideals of an
arbitrary induced module.

One of our main interests is to give necessary and sufficient conditions for
primeness, and to describe the associated primes, in the crossed product
case when the action of the monoid is not an action by automorphisms; this
includes the case of a skew polynomial ring $R[x;\sigma]$ where $\sigma$
is an endomorphism of $R$.

At the end, we give some illustrative examples, several of which show the
necessity of the various hypotheses in our results.
\end{abstract}

\maketitle

\section{Introduction}\label{Introduction}

Suppose $S$ is a ring graded by a \upm{} $G$ and $R=S_e$ is the
identity component of $S$.  Suppose also that $I$ is an ideal of $R$ and
$M$ is a right $R$-module, whence the induced right $S$-module
$M\otimes_R S$ is a graded right $S$-module.  In each of
Sections~\ref{The strongly graded case}, \ref{The crossed product case},
and \ref{The skew polynomial and skew laurent cases}, we give necessary
and sufficient conditions for $IS$ to be a prime ideal of $S$, and, more
generally, for the induced $S$-module $M\otimes_R S$ to be prime. We
then give a description of the annihilators of the prime submodules of
$M\otimes_R S$; these are known as the associated primes of
$M\otimes_R S$.

The definitions and general results used in the rest of the paper are
introduced in Section~\ref{Definitions and basic results}.  Assuming the
grading monoid to be a \upm{} allows us to extend arguments used in
studying $\bbN$- and $\bbZ$-graded rings, such as degree arguments.
We are able to conclude that the associated primes of a graded $S$-module
must be homogeneous.

In Section~\ref{The strongly graded case}, we consider the case where
$G$ is a group and the grading is strong.  In this case, there is a natural
notion of \GGi{} ideal of $R$, and we show that $IS$ is a prime ideal if and
only if $I$ is a \GGi{} ideal such that whenever $I$ contains a product of
\GGi{} ideals, $I$ contains one of them.  Let $\st{I}{G}$ denote the
largest \GGi{} ideal contained in the ideal $I$ of $R$.  We show that a
graded $S$-module is prime if and only if $\st{\ann N}{G}=\st{\ann
M}{G}$ for all nonzero $R$-submodules $N$ of the identity component
$M$ of the module.  We then show that the associated primes of a graded
module are of the form $KS$ where $K$ is the largest \GGi{} ideal
contained in the annihilator of an $S$-prime submodule of $M$.

In Section~\ref{The crossed product case}, we study the case where $S$
is a crossed product over $R$ with an action $r\mapsto
\ssa{g}{r}$ of $G$ on $R$. When $G$ is not a group and the action of $G$ on $R$
is not by automorphisms, the situation is naturally more
complicated. For example, we must distinguish between
\GGs{} and \GGi{} ideals. We give necessary and sufficient conditions for
$IS$ or $M\otimes_R S$ to be prime, but the conditions are less
symmetric than in Section~\ref{The strongly graded case}.  For example,
Proposition~\ref{S-prime module for R*G} shows that $M\otimes_R S$ is
a prime $S$-module if and only if for any $m\in M, b\in R, g\in G$, if
$m\ssa{g}{R}\ssaa{g}{h}{b}=0$ for all $h\in G$, then $m=0$ or
$Mb=0$. Sometimes nicer conditions can be given for primeness  with an
extra hypothesis.  For example, Corollary~\ref{S-prime ideal R*G comm
or surj} shows that when each map $r\mapsto \ssa{g}{r}$ is surjective, a
\GGi{} ideal $I$ of $R$ induces a prime ideal $IS$ if and only if whenever
$A,B$ are ideals of $R$ with $B$ \GGs{} and $AB\sub I$, we have
$A\sub I$ or $B\sub I$. This result is also true without the surjectivity
hypothesis if $R$ is commutative, but another hypothesis about
reversibility of the action of $G$ must then be added.   This reversibility
hypothesis is a novel feature of considering non-commutative monoids that
are not groups; in particular, it has no analog in the study of skew
polynomial rings.  When $R$ \hasacc{} and $G$ acts by automorphisms,
conditions just like those in the strongly group-graded case are necessary
and sufficient.

We also describe the associated primes of $M\otimes_R S$ as extensions
$KS$ where $K$ is the largest \GGs{} ideal contained in the annihilator of
an $S$-prime submodule of some twist of $M$.  These twists are shown to
be necessary in general by examples in Section~\ref{Examples}, but may
be eliminated when each map $r\mapsto \ssa{g}{r}$ is surjective and the
reversibility hypothesis mentioned in the previous paragraph holds.

We discuss skew polynomial and skew laurent rings in Section~\ref{The
skew polynomial and skew laurent cases}; the results are special cases of
the results in previous sections but often have a simpler form.  One of the
motivations for this paper was to understand conditions for the skew
polynomial ring $R[x;\sigma]$ to be prime and generalize them to
characterize associated primes.  Various definitions of a $\sigma$-prime
ring have been given over the years: see for example, Goldie-Michler
\cite{GM74}, Pearson--Stephenson \cite{PS77}, and Irving \cite{Irv79}.
(See the discussion in Section~\ref{The skew polynomial and skew laurent
cases} for more details.)  All of these definitions give necessary and
sufficient conditions for $R[x;\sigma]$ to be prime in special cases, but we
are unaware of any published definition that works in complete generality.
We give such a definition here.

Even if $\sigma$ is an automorphism, the skew laurent ring $R[x^{\pm1
};\sigma]$ can be prime when the skew polynomial ring $R[x;\sigma]$ is
not.  This indicates that we should refer to $S$-primeness rather than
$\sigma$-primeness, where $S=R[x;\sigma]$ or
$S=R[x^{\pm1};\sigma]$ is the extension ring we are concerned with.
That is the approach we take in this paper.

As noted above, Section~\ref{Examples} collects various examples
illustrating the results of the paper and showing that various hypotheses
are necessary.  Example~\ref{affine example} gives a good picture of our
results in a simple setting. It computes the associated primes in the skew
polynomial case when $R$ is the coordinate ring of an affine algebraic set
and $M$ is the simple module corresponding to a point. The example is
still complex enough to exhibit phenomena that do not occur for
automorphic skew polynomial rings.

\section{Definitions and basic results}\label{Definitions and basic results}

Throughout this paper $R$ will be a subring of $S$.  We are interested in
the connection between primeness of ideals and $R$-modules induced
from $R$ to $S$.  We give some general definitions and then quickly
specialize to the case we will study: $S$ is a $G$-graded ring for a \upm{}
$G$ and $R$ is the identity component of $S$.  In this section, we prove
that such rings and induced modules possess properties that are similar to
properties of polynomial extensions.  In particular, we show that it is
generally enough to consider graded ideals and graded submodules.

\medskip

A nonzero $R$-module $M$ is said to be \define{prime} if $\ann N=\ann
M$ for all nonzero submodules $N$.  If $M$ is any $R$-module, an
\define{associated ideal} of $M$ is the annihilator of a prime submodule of
$M$.  We denote the set of all associated ideals of $M$ by $\Ass M$. The
annihilator of a prime module is always a prime ideal, and so an associated
ideal is also called an \define{associated prime}. Conversely, an ideal $I$ is
prime if and only if the module $R/I$ is prime.

Note that $M$ is prime if and only if for any $m\in M$ and any ideal
$J\normal R$ with $mJ=0$, we have either $m=0$ or $MJ=0$.  Another
equivalent condition for $M$ to be prime is that whenever $m\in M$,
$b\in R$ and $mRb=0$, either $m=0$ or $Mb=0$.  In each of
Sections~\ref{The strongly graded case}, \ref{The crossed product case},
and \ref{The skew polynomial and skew laurent cases}, we generalize
these conditions to equivalent conditions for the primeness of the induced
module $M\otimes_R S$ in the setting where $M$ is an $R$-module and
$S$ is an overring of $R$.

\smallskip

We say an ideal $I$ of $R$ is \define{right $S$-stable} if $SI\sub IS$, that
is, if $IS$ is an ideal of $S$. We say $I$ is \define{$S$-stable} if $IS=SI$.
We say an ideal $I$ of $R$ is \define{right $S$-prime} if $IS$ is a prime
ideal of $S$.  Such an $I$ must be right $S$-stable. We say $R$ is an
$S$-prime ring if $0$ is a right $S$-prime ideal, i.e., if $S$ is a prime ring.

A similar definition of $S$-stable was given in Montgomery-Schneider
\cite{MS99}, although the name they used would be ``$G$-stable'' in our
case.  They did not make the analogous definition for $S$-prime.

If $M$ is a right $R$-module, we say $M$ is \define{$S$-prime} if the
induced $S$-module $M\otimes_R S$ is prime.

\begin{lem}\label{S-prime ideal vs S-prime module}
Let $I$ be a right $S$-stable ideal of $R$.  Then the right $R$-module
$R/I$ is $S$-prime if and only if the ideal $I$ is right $S$-prime.
\end{lem}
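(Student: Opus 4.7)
The plan is to reduce the lemma to the general fact, stated earlier in the section, that for any ring $T$ and any ideal $J$ of $T$, the right $T$-module $T/J$ is prime if and only if $J$ is a prime ideal of $T$. Applied to $T=S$ and $J=IS$, this gives that $S/IS$ is a prime right $S$-module if and only if $IS$ is a prime ideal of $S$, i.e.\ if and only if $I$ is right $S$-prime. So all that remains is to identify the induced module $(R/I)\otimes_R S$ with $S/IS$ as right $S$-modules.

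To produce this identification, I would tensor the short exact sequence $0\to I\to R\to R/I\to 0$ of right $R$-modules on the right over $R$ with $S$. By right exactness of $-\otimes_R S$, this yields an exact sequence $I\otimes_R S\to S\to (R/I)\otimes_R S\to 0$ of right $S$-modules. The map $I\otimes_R S\to S$ sends $a\otimes s$ to $as$, so its image is precisely the additive subgroup $IS$ of $S$. Because $I$ is right $S$-stable we have $SI\sub IS$, so $IS$ is a two-sided ideal of $S$ and in particular a right $S$-submodule; hence $(R/I)\otimes_R S\cong S/IS$ as right $S$-modules.

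Combining the two steps, $R/I$ is $S$-prime, i.e.\ $(R/I)\otimes_R S$ is a prime right $S$-module, if and only if $S/IS$ is a prime right $S$-module, if and only if $IS$ is a prime ideal of $S$, if and only if $I$ is right $S$-prime. I do not anticipate any real obstacle; the only subtle point is to note that right $S$-stability of $I$ is exactly what is needed to make $IS$ a two-sided ideal so that both ``$IS$ is a prime ideal'' and ``$S/IS$ is a prime right $S$-module'' make sense and are equivalent via the standard correspondence recalled in the opening paragraphs of this section.
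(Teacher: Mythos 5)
Your proposal is correct and follows the same route as the paper: both reduce to the standard equivalence ``$S/IS$ is a prime $S$-module iff $IS$ is a prime ideal'' and the $S$-module isomorphism $(R/I)\otimes_R S\iso S/IS$, with your right-exactness argument simply spelling out the latter isomorphism that the paper takes as known.
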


\begin{proof}
Since $IS\normal S$, our remarks after the definition of prime module
imply $S/IS$ is a prime module if and only if $IS$ is a prime ideal.  Since
$R/I\otimes_R S\iso S/IS$ as right $S$-modules, this proves the lemma.
\end{proof}

\bigskip

A \define{\upm} is a monoid $G$ with the property that whenever $X,Y$
are nonempty finite subsets of $G$, there exist $x\in X,y\in Y$ such that
$xy\ne x'y'$ for any pair $(x',y')\in X\times Y$ different from $(x,y)$.  By
choosing the sets $X,Y$ appropriately, we see that a \upm{} must be
cancellative and torsionfree (i.e., if $a$ is not the identity, all powers
$a,a^2,\dots$ are distinct). If a \upm{} is a group, we call it a
\define{\upg}.

\begin{remx}
Any orderable monoid is a \upm.  Thus, for example, any submonoid of a
free group or a torsionfree nilpotent group is a \upm.

The group generated by $x,y$ subject to the relations
$x^{-1}y^2x=y^{-2}$ and $y^{-1}x^2y=x^{-2}$ is torsionfree but not a
\upg.  For a discussion of this and related examples, see Carter \cite{Car13}.

The monoid generated by $x,y$ subject to the relations $xy=yx$ and
$x^2=y^2$ is commutative, torsionfree, and cancellative but is not a
\upm, as can be seen by taking $X=Y=\{\,x,y\,\}$.
\end{remx}

\noindent\textbf{Conventions.}
Throughout this paper, $G$ will be a \upm{} or a \upg{} with identity $e$.
We will use $\bbN$ to denote the additive monoid of nonnegative integers.
Modules will be unital right modules and ideals will be two-sided unless the
contrary is explicitly stated.

\medskip

Recall that an abelian group $A$ is \define{$G$-graded} if a
decomposition $A=\oplus_{g\in G} A_g$ into a direct sum of subgroups is
given. The nonzero elements of $A_g$ are said to be
\define{homogeneous} of \define{degree} $g$.  For a homogeneous $a\in
A$, we write $\partial a$ for the degree of $a$.   Note that $0$ is not
regarded as a homogeneous element, although we will sometimes write
``nonzero homogeneous element'' for emphasis.

If $a\in A$ is arbitrary, we say an expression $a=a_1+\dots+a_k$ is a
\define{canonical form} for $a$ if the elements $a_i$ are homogeneous
and their degrees $\partial a_i$ are distinct.  If $a=a_1+\dots+a_k$ is a
canonical form, the $\partial a_i$ \define{homogeneous component} of
$a$ is $a_i$. We call $\{\,\partial a_1,\dots,\partial a_k\,\}$ the
\define{support} of $a$. A subset $B\sub A$ is said to be
\define{homogeneous} if the components of any element of $B$ are all in
$B$.

If $A$ is a ring, we say it is a \define{$G$-graded} ring if $A_gA_h\sub
A_{gh}$ for all $g,h\in G$ and $1_A\in A_{e}$.  If $A$ is a $G$-graded
ring and $M=\oplus_{g\in G}M_g$ is a right $A$-module that is
$G$-graded, we say $M$ is a \define{graded module} if $M_gA_h\sub
M_{gh}$ for all $g,h\in G$.

\smallskip

Since $G$ is a cancellation monoid, degrees and homogeneous components
are fairly well-behaved.  This is exhibited in the following lemma and
corollary. Recall that if $X$ is a subset of $R$ or of an $R$-module $M$,
then $\ann X=\{\,r\in R\smid xr=0\text{ for all }x\in X\,\}$.  We leave
the proofs to the reader.

\begin{lem}\label{degree works}
Let $G$ be a cancellation monoid,  $S$ a $G$-graded ring, and $M$ a
graded right $S$-module.
\begin{enumerate}
\item
If $m\in M$  is homogeneous and $s=s_1+\dots+s_k\in A$ is a
canonical form, then $ms=ms_1+\dots+ms_{k}$ becomes a canonical
form when we omit any $ms_i$ that are $0$.  In particular, $ms=0$ if
and only if each $ms_i=0$.
\item
If $s\in S$  is homogeneous and $m=m_1+\dots+m_k\in M$ is a
canonical form, then $ms=m_1s+\dots+m_ks$ becomes a canonical
form when we omit any $m_is$ that are $0$.  In particular, $ms=0$ if
and only if each $m_is=0$.
\qedsymbol
\end{enumerate}
\end{lem}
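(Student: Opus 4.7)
The plan is to leverage cancellation in $G$: multiplying a homogeneous element by another homogeneous element carries distinct degrees to distinct degrees, and that single fact drives both parts.

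For part (1), I would set $\partial m = g$ and write $s_i \in S_{h_i}$ with the $h_i$ pairwise distinct. Since $M$ is graded as an $S$-module, $ms_i \in M_{gh_i}$, so each nonzero $ms_i$ is homogeneous of degree $gh_i$. Left cancellation in $G$ makes the map $h \mapsto gh$ injective, so $gh_1, \dots, gh_k$ are pairwise distinct. Discarding the $ms_i$ that vanish therefore leaves a sum of homogeneous summands with pairwise distinct degrees, which is by definition a canonical form for $ms = ms_1 + \dots + ms_k$. The ``in particular'' assertion then drops out of the directness of $M = \bigoplus_{g \in G} M_g$: elements sitting in pairwise distinct homogeneous components can sum to zero only if each summand is zero, so $ms = 0$ forces every $ms_i = 0$.

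Part (2) is the mirror argument: with $m_j \in M_{g_j}$ (the $g_j$ distinct) and $s \in S_h$, right cancellation in $G$ makes $g \mapsto gh$ injective, so the nonzero $m_j s$ land in pairwise distinct components $M_{g_j h}$, and exactly the same reasoning delivers both the canonical form and the vanishing criterion.

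There is essentially no obstacle here; the content is a direct unpacking of the cancellation hypothesis on $G$ together with the directness of the homogeneous decomposition of $M$, which is presumably why the authors leave the proof to the reader. The only thing one has to be slightly careful about is that cancellation is needed on the correct side in each part, which is exactly why the cancellation property (as opposed to, say, mere associativity) is the crucial hypothesis on $G$ in this section.
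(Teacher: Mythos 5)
Your proof is correct and is exactly the argument the paper intends (the authors explicitly leave it to the reader): cancellation makes the degrees $gh_1,\dots,gh_k$ (resp.\ $g_1h,\dots,g_kh$) distinct, and directness of the grading then forces each term of a vanishing sum to vanish. Nothing further is needed.
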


\begin{cor}\label{homog annihilator}
Let $G$ be a cancellation monoid,  $S$ a $G$-graded ring, and $M$ a
graded right $S$-module.
\begin{enumerate}
\item
If $m\in M$ is homogeneous, then $\ann m$ is a homogeneous right
ideal of $S$.
\item
$\ann M$ is a homogeneous ideal of $S$.
\qedsymbol
\end{enumerate}
\end{cor}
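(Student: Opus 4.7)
The plan is to deduce both parts directly from Lemma \ref{degree works}, using the characterization that a subset $B \subseteq S$ is homogeneous precisely when every element of $B$ has all of its homogeneous components in $B$.

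For part (1), I take an arbitrary $s \in \ann m$ and write it in canonical form $s = s_1 + \dots + s_k$. Since $ms = 0$ and $m$ is homogeneous, Lemma \ref{degree works}(1) applies and gives $ms_i = 0$ for every $i$. Thus each $s_i \in \ann m$, which is exactly what is needed to conclude that $\ann m$ is homogeneous. It is of course a right ideal by general principles, so nothing new needs to be checked there.

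For part (2), I observe that $\ann M = \bigcap_{m \in M} \ann m$, and since every element of $M$ is a finite sum of homogeneous elements, this can be rewritten as $\ann M = \bigcap_{m \text{ homogeneous}} \ann m$. By part (1), each term in this intersection is a homogeneous right ideal of $S$, and an arbitrary intersection of homogeneous subgroups of a $G$-graded abelian group is again homogeneous (if every $s \in B_\alpha$ has its components in $B_\alpha$, the same is true of their intersection). Finally, $\ann M$ is automatically a two-sided ideal since it annihilates a module, so this yields the conclusion.

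Neither step presents any real obstacle: the first is a one-line invocation of Lemma \ref{degree works}(1), and the second is a routine intersection argument. The only thing worth being careful about is the two-sidedness in part (2), which does not follow from part (1) alone (where only right-ideal structure is asserted) but is immediate from the module-theoretic definition of annihilator.
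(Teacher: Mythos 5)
Your proof is correct and follows essentially the same route as the paper's (omitted) argument: part (1) is a direct application of Lemma~\ref{degree works}(1), and part (2) writes $\ann M$ as the intersection of $\ann m$ over homogeneous $m\in M$ and uses that intersections of homogeneous sets are homogeneous. The extra remark about two-sidedness is a harmless and accurate clarification.
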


\medskip

We now prove a general result about submodules of graded modules over
rings graded by a \upm. This pivotal result  is presumably rather old in the
cases $G=\bbN$ and $G=\bbZ$.  The idea and the proof often show up in
results like our Corollary~\ref{anns of prime submods of homog mods}
below (often without our preliminary lemma); see the comments before
that corollary for more citations. A similar idea was used in the paper
Jespers-Krempa-Puczy{\l}owski \cite{JKP82} to study radicals in rings
graded by \upm{}s. The result is loosely related to the notion of ``good''
polynomial employed in Leroy-Matczuk \cite{LM04} to study associated
primes for $R[x;\sigma,\delta]$.

\begin{lem}\label{anns of submods of homog mods}
Let $G$ be a \upm, $S$ a $G$-graded ring, and $M$ a graded right
$S$-module.  Suppose $N$ is a nonzero submodule of $M$ and $a\in N$
has canonical form $a=a_1+\dots+a_k$ with $k$ minimal for a nonzero
element of $N$. Then the following statements hold.
\begin{enumerate}
\item
$\ann a=\ann a_i$ for $i=1,\dots,k$.
\item
$\ann a$ is homogeneous.
\item
$\ann aS$ is homogeneous.
\end{enumerate}
\end{lem}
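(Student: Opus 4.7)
The plan is to prove (2) first; parts (1) and (3) then follow as corollaries. Write $\partial a_j = g_j$. The pivotal consequence of the minimality of $k$ is: \emph{for any homogeneous $t \in S$, if some $a_j t = 0$, then $at = 0$, and hence $a_i t = 0$ for all $i$}. Indeed, Lemma~\ref{degree works}(2) exhibits $at = a_1 t + \cdots + a_k t$ as a canonical form once zero terms are omitted; if one $a_j t$ vanishes and $at$ were nonzero, then $at \in N$ would have canonical length strictly less than $k$, contradicting minimality.

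To prove (2), take $r \in \ann a$ with canonical form $r = r_1 + \cdots + r_l$ and expand $0 = ar = \sum_{j, i} a_j r_i$, each summand being homogeneous of degree $g_j \partial r_i$ or zero. Applying the unique product property to the finite sets $\{g_1,\dots,g_k\}$ and $\{\partial r_1,\dots,\partial r_l\}$ produces indices $(j_0, i_0)$ such that $g_{j_0}\partial r_{i_0}$ is attained by no other pair. Then $a_{j_0} r_{i_0}$ is the entire homogeneous component of $ar = 0$ in that degree, so $a_{j_0} r_{i_0} = 0$. By the pivotal observation, $r_{i_0} \in \ann a$. Replacing $r$ by $r - r_{i_0}$ and inducting on $l$ shows every $r_i \in \ann a$, establishing (2).

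For (1), (2) reduces matters to homogeneous elements; combining the pivotal observation with Lemma~\ref{degree works} gives $\ann a = \ann a_i$ for each $i$, with both inclusions obtained by splitting an annihilator into its homogeneous components and applying the appropriate part of Lemma~\ref{degree works}. For (3), observe that $\ann(aS) = \{\,r \in S \smid Sr \sub \ann a\,\}$. Given $r = \sum r_i \in \ann(aS)$ and any homogeneous $s \in S$, the expression $sr = \sum s r_i$ is a canonical decomposition of an element of the homogeneous right ideal $\ann a$ (by (2)), so each $s r_i \in \ann a$; summing over the homogeneous components of an arbitrary $s \in S$ then yields $S r_i \sub \ann a$, whence $r_i \in \ann(aS)$. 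The principal obstacle is isolating a single summand in the double sum $\sum a_j r_i$, which is precisely what the unique product property delivers.
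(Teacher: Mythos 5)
Your proof is correct and follows essentially the same route as the paper's: the unique product property isolates a single summand $a_{j_0}r_{i_0}$, minimality of $k$ then forces $ar_{i_0}=0$, and induction on the number of homogeneous components of $r$ finishes the argument. The only differences are cosmetic --- you prove (2) first and deduce (1), whereas the paper does the reverse, and you argue (3) directly rather than citing Corollary~\ref{homog annihilator}(2) applied to $S/\ann a\iso aS$.
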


\begin{proof}
(1) Let $s=s_1+\dots+s_{\ell}$ be a canonical form for a nonzero $s\in S$
satisfying $as=0$. We will show by induction on $\ell$ that $a_is_j=0$ for
all $i,j$. First suppose $\ell=1$, i.e., $s$ is homogenous.  By
Lemma~\ref{degree works}(2), $a_is=0$ for all $i$.

Now suppose $\ell>1$.  By the unique product property, there exist
subscripts $m,n$ such that the product $\partial a_m \partial s_n$ is
unique. Since $as=0$, the product $a_ms_n$ must equal $0$.  Thus
$as_n$ has smaller support than $a$.  By the minimality of $k$, this forces
$as_n=0$, which in turn implies $a_is_n=0$ for all $i$.  This also implies
$a(s-s_n)=0$, so by induction on $\ell$, we have that $a_is_j=0$ for all
$i$ and all $j\ne n$. This finishes the proof of (1).

(2) This is an immediate consequence of (1) and Corollary~\ref{homog
annihilator}(1).

(3) This follows from (2) and Corollary~\ref{homog annihilator}(2).
\end{proof}

Recall that when $G$ is a group, we say a $G$-graded ring $S$ is
\define{strongly graded} if $S_gS_h=S_{gh}$ for all $g,h\in G$.

\begin{cor}\label{submods of homog mods}
Let $G$ be a \upm, $S$ a $G$-graded ring, $M$ a graded right
$S$-module, and $N$ a nonzero submodule of $M$.  Then $N$ contains an
isomorphic copy of $bS$ for some (nonzero) homogeneous $b\in M$.  If
$G$ is a group and $S$ is strongly graded, then we can take $b$ to have
degree $e$.
\end{cor}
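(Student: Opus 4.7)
The plan is to apply Lemma~\ref{anns of submods of homog mods} to an element of $N$ chosen to have minimal canonical form length. Since $N\ne 0$, we may pick $a\in N$ with canonical form $a=a_1+\dots+a_k$ such that $k$ is minimal among the support sizes of nonzero elements of $N$. The lemma then gives $\ann a=\ann a_i$ for every $i$.

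For the first assertion, I will take $b=a_1$. The right $S$-module map $S\to aS$ defined by $s\mapsto as$ is surjective with kernel $\ann a$, so $aS\iso S/\ann a$; similarly $bS\iso S/\ann b$; and since $\ann a=\ann b$ we get $aS\iso bS$. Because $aS\sub N$, this exhibits an isomorphic copy of $bS$ inside $N$.

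For the case when $G$ is a group and $S$ is strongly graded, the task is to replace $b$ by a homogeneous element of degree $e$ while preserving the annihilator equality. Let $g_1=\partial a_1$. Strong grading gives $a_1 S_{g_1^{-1}}S_{g_1}=a_1 S_e\ni a_1$, so $a_1 S_{g_1^{-1}}\ne 0$, and we may choose $s\in S_{g_1^{-1}}$ with $a_1 s\ne 0$. The element $a'=as\in N$ has candidate homogeneous components $a_1 s,\dots,a_k s$ lying in the distinct degrees $g_ig_1^{-1}$. Since $a_1 s\ne 0$, the support of $a'$ has size between $1$ and $k$; by the minimality of $k$, it is exactly $k$, so every $a_i s$ is nonzero. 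Applying Lemma~\ref{anns of submods of homog mods}(1) to $a'$ now yields $\ann a'=\ann(a_1 s)$, and setting $b=a_1 s$ (which has degree $g_1^{-1}g_1=e$), the argument of the previous paragraph shows that $a'S\sub N$ is an isomorphic copy of $bS$.

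The main obstacle is precisely the passage to the strongly graded case: the naive choice $b=a_1$ need not have degree $e$, and to replace it by something of degree $e$ one must multiply $a$ by a homogeneous element $s$ of degree $g_1^{-1}$ without destroying the minimality of the support size. Strong grading is exactly what guarantees the existence of such an $s$ with $a_1 s\ne 0$, and the minimality of $k$ then automatically transfers so that Lemma~\ref{anns of submods of homog mods}(1) applies to $a'=as$.
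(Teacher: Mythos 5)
Your proof is correct and follows essentially the same route as the paper: choose $a\in N$ of minimal support size, take $b=a_1$ via Lemma~\ref{anns of submods of homog mods}(1), and in the strongly graded case multiply by a suitable $s\in S_{g_1^{-1}}$ to move the first component into degree $e$, using minimality of $k$ to keep the lemma applicable. (The degree of $a_1s$ is $g_1g_1^{-1}=e$ rather than $g_1^{-1}g_1$ as written, but this is immaterial since $G$ is a group.)
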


\begin{proof}
Let $a\in N$ be as in Lemma~\ref{anns of submods of homog mods}, with
canonical form $a=a_1+\dots+a_k$. Then $a_1\in M$ and $aS\sub N$ is
isomorphic to the cyclic module $S/\ann a=S/\ann a_1\iso a_1S$. Thus
we may take $b=a_1$.

If $S$ is strongly graded and $a$ has a nonzero component in degree $g\in
G$, then $aS_{g^{-1}}$ contains an element $a'$ with nonzero component
$b$ in degree $e$.  This $a'$ has support no larger than that of $a$ and is
still in $N$. By Lemma~\ref{anns of submods of homog mods}, we see
that $\ann a'=\ann b$.  Thus $bS\iso a'S\sub N$.
\end{proof}

\smallskip

The following corollary of Lemma~\ref{anns of submods of homog mods}
tells us that the associated prime ideals of a graded module are all
homogeneous and can be found by considering only graded submodules.
This result for $G=\bbN$ (and $R$ commutative) is Proposition 3.12 in
Eisenburd \cite{Eis95}.  Exercise 3.5 in \cite{Eis95} asks the reader to
extend the result to gradings by ordered abelian monoids. The corollary is
also a key result in the study of associated primes of induced modules for
$R[x;\sigma]$ in Nordstrom \cite[Proposition~3.1]{Nor05}.

\begin{cor}\label{anns of prime submods of homog mods}
Let $G$ be a \upm, $S$ a $G$-graded ring, $M$ a graded right
$S$-module, and $N$ a prime submodule of $M$.  Then $\ann N$ is a
homogenous prime ideal and is equal to $\ann N'$ for some prime, graded
submodule $N'$ of $M$.
\end{cor}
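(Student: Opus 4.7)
The plan is to extract from $N$ a homogeneous element that captures all of the annihilator information, then use it to build a prime graded submodule of $M$ with the same annihilator. The setup laid out in Lemma~\ref{anns of submods of homog mods} does almost all of the work.

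First I would pick $a\in N$ nonzero whose canonical form $a=a_1+\dots+a_k$ has minimal length $k$ among nonzero elements of $N$. Lemma~\ref{anns of submods of homog mods} then gives $\ann a=\ann a_i$ for every $i$, and tells us that $\ann aS$ is homogeneous. Since $aS$ is a nonzero $S$-submodule of the prime module $N$, we have $\ann aS=\ann N$, so $\ann N$ is homogeneous. The fact that it is a prime ideal is immediate from $N$ being prime, using the general observation (noted just after the definition of \textbf{prime}) that the annihilator of any prime module is a prime ideal.

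Next I would set $N'=a_1S\sub M$. This is a graded submodule of $M$ because it is generated by the homogeneous element $a_1\in M_{\partial a_1}$. To identify its annihilator, note that $a_1S\iso S/\ann a_1=S/\ann a\iso aS$ as right $S$-modules (both are cyclic with the same annihilator), and hence $\ann N'=\ann a_1S=\ann aS=\ann N$. It remains to check that $N'$ is prime. Since $N'\iso aS$, it is enough to verify primeness of $aS$; but every nonzero submodule $T$ of $aS$ is also a nonzero submodule of the prime module $N$, so $\ann T=\ann N=\ann aS$. This shows $aS$, and therefore $N'$, is prime.

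I do not anticipate any real obstacle: the inductive minimality argument is already packaged inside Lemma~\ref{anns of submods of homog mods}, and the remaining step is the standard observation that any nonzero submodule of a prime module is itself prime with the same annihilator. The only small point to be careful about is that $a_1$ need not lie in $N$, only in $M$; this is fine because the statement only requires $N'\sub M$, not $N'\sub N$.
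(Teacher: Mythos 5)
Your proof is correct and follows exactly the paper's intended route: the paper's one-line proof simply cites Lemma~\ref{anns of submods of homog mods} and Corollary~\ref{submods of homog mods}, and your argument is precisely the unpacking of those two results (minimal canonical form, $\ann aS=\ann N$ homogeneous, $N'=a_1S\iso aS$ prime and graded). Your closing remark that $a_1$ need only lie in $M$, not in $N$, is a correct and worthwhile observation.
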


\begin{proof}
This is immediate from Lemma~\ref{anns of submods of homog mods}
and Corollary~\ref{submods of homog mods}.
\end{proof}

\smallskip

The next result is well-known in the cases $G=\bbN$ and $G=\bbZ$.  See
for example, N{\u{a}}st{\u{a}}sescu-van Oystaeyen
\cite[Prop.~II.1.4]{NvO82}.

\begin{lem}\label{graded prime}
Let $G$ be a \upm{} and let $S$ be a $G$-graded ring. Let $I$ be a
homogenous ideal of $S$ and let $M$ be a graded $S$-module.
\begin{enumerate}
\item
$I$ is prime if and only if for any homogeneous $a,b\in S$, if
$aS_gb\sub I$ for all $g\in G$, then $a\in I$ or $b\in I$.
\item
$M$ is prime if and only if for any homogeneous $m\in M,b\in S$, if
$mS_gb=0$ for all $g\in G$, then $m=0$ or $Mb=0$.
\end{enumerate}
\end{lem}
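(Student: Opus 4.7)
The plan is to prove (2) directly from Lemmas \ref{degree works} and \ref{anns of submods of homog mods}, then deduce (1) by applying (2) to the graded $S$-module $M = S/I$.

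The ``only if'' direction of each part is a one-line observation: if $I$ is prime (resp.\ $M$ is prime) and $aS_g b \subseteq I$ (resp.\ $mS_g b = 0$) for every $g \in G$, then summing over $g$ gives $aSb \subseteq I$ (resp.\ $mSb = 0$), and primeness yields the conclusion.

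For the substantive ``if'' direction of (2), assume the homogeneous criterion holds, and let $m \in M$, $b \in S$ satisfy $m \ne 0$ and $mSb = 0$; the goal is $Mb = 0$. The submodule $N = mS$ is nonzero since it contains $m$, so Lemma \ref{anns of submods of homog mods} produces an element $a = a_1 + \dots + a_k \in N$ in canonical form with $\ann a = \ann a_1$. Because $a \in mS$, we have $aSb \subseteq mSb = 0$, which forces $Sb \subseteq \ann a = \ann a_1$, i.e., $a_1 Sb = 0$. Now decompose $b = b_1 + \dots + b_l$ into canonical form. For each homogeneous $s \in S_g$, the products $a_1 s b_j$ lie in the pairwise distinct degrees $(\partial a_1)\, g\, (\partial b_j)$ (distinct by left cancellation in $G$), so Lemma \ref{degree works} forces each $a_1 s b_j = 0$. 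Hence $a_1 S_g b_j = 0$ for every $g$ and every $j$, and the homogeneous hypothesis applied to the homogeneous elements $a_1$ and $b_j$ gives $a_1 = 0$ or $M b_j = 0$. Since $a_1 \ne 0$, this yields $M b_j = 0$ for every $j$, and summing gives $Mb = 0$.

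For (1), the homogeneity of $I$ makes $S/I$ a graded $S$-module, and $I$ is a prime ideal of $S$ if and only if $S/I$ is a prime right $S$-module; moreover the homogeneous criterion in (1) for $I$ translates verbatim into the homogeneous criterion in (2) for $M = S/I$. So (1) follows from (2). The main obstacle is coordinating the two non-homogeneous quantities: Lemma \ref{anns of submods of homog mods} reduces $m$ to a homogeneous surrogate $a_1$ inside $mS$, while Lemma \ref{degree works} lets us peel off the homogeneous components of $b$ one at a time, so that the hypothesis (which speaks only about homogeneous $m$ and $b$) can be applied.
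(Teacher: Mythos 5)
Your proof is correct and follows essentially the same route as the paper: reduce (1) to (2) via $M=S/I$, handle the trivial ``only if'' direction by summing over $g$, and for the ``if'' direction replace the nonhomogeneous $m$ by a homogeneous element $a_1$ with $\ann a_1=\ann a$ via Lemma~\ref{anns of submods of homog mods} (the paper routes this through Corollary~\ref{submods of homog mods}, which is the same mechanism), then peel apart the components of $b$ using homogeneity of the annihilator. The only cosmetic difference is that you take $N=mS$ where the paper takes $N=\{\,m'\in M\smid m'Sb=0\,\}$; both work.
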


\begin{proof}
(1)  This follows from (2) if we set $M=S/I$.

(2) If $M$ is prime and $mS_gb=0$ for all $g\in G$, then $mSb=0$.
 Thus $m=0$ or $b\in\ann M$.  This proves the ``only if'' direction.

Suppose now that $M$ satisfies the ``if'' hypothesis and suppose
$mSb=0$ for nonzero $m\in M$, $b\in S$.  Set $N=\{\,m'\in M\smid
m'Sb=0\,\}$; this $N$ is a nonzero submodule of $M$.  By
Corollary~\ref{submods of homog mods}, there is a homogeneous element
$x\in M$ such that $xS$ is isomorphic to a submodule of $N$.  In
particular, $xSb=0$.

Let $b=b_1+\dots+b_{\ell}$ be a canonical form.  Since $\ann xS$ is
homogenous and $b\in \ann xS$, we have $xSb_j=0$ for each $j$. By
hypothesis, this implies $Mb_j=0$ for each $j$, and so $Mb=0$.
\end{proof}

\section{The strongly graded case}\label{The strongly graded case}

In this section, we assume $G$ is a \upg, $S$ is a strongly $G$-graded
ring, and $R=S_e$. We give alternative characterizations of the notion of
$S$-prime in this case. For example, we show $IS$ is a prime ideal of $S$
for an $S$-stable ideal $I$ of $R$ if and only if whenever $J,K$ are
$S$-stable ideals of $R$ with $JK\sub I$, we have $J\sub I$ or $K\sub
I$.  This result --- in the case $I=0$, but this limitation is easily removed
--- was proved for any torsionfree group $G$ by
Passman \cite[Corollary 4.6]{Pas84} or \cite[Corollary 8.5]{Pas89}, using
more complicated methods.

We also show that the associated primes of a graded $S$-module $N$ are
precisely the ideals $IS$ where $I$ is the largest $S$-stable ideal
contained in the annihilator of some $S$-prime submodule of the
$R$-module $N_e$ of degree $e$ elements of $N$.

\smallskip

If $S$ is strongly $G$-graded and $N$ is a graded $S$-module, then the
multiplication map from $N_g\otimes_R S_h$ to $N_{gh}$ is an
$R$-module isomorphism for all $g,h\in G$, and the multiplication map
from $N_e\otimes_R S$ to $N$ is a graded $S$-module isomorphism.
(See Dade \cite[Theorem~2.8]{Dad80} and N{\u{a}}st{\u{a}}sescu-van
Oystaeyen \cite[Theorem~1.3.4 \& Proposition~1.3.6]{NvO82}.) In
particular, $N_gS_h=N_{gh}$ for all $g,h\in G$.  If $M$ is an
$R$-module, this isomorphism allows us to identify $M\otimes_R S_g$
with the formal product $MS_g$.

If $I\normal R$, $S$ is strongly $G$-graded, and $g\in G$, we define
$\ssa{g}{I}=S_gIS_{g^{-1}}$. This is again an ideal of $R$, and it is easy
to see that $\ssaa{g}{h}{I}=\ssa{gh}{I}$.  Thus the map taking $g$ to
$I\mapsto \ssa{g}{I}$  is a homomorphism from $G$ to the
automorphism group of the lattice of ideals of $R$.

Some authors define $\ssa{g}{I}=S_{g^{-1}}IS_g$ so that
$\ssaa{h}{g}{I}=\ssa{gh}{I}$, but our definition is consistent with the use
of the notation in the rest of this paper, where we are required to define
$\ssa{g}{I}$ without the existence of $g^{-1}$.

We say an ideal $I$ of $R$ is \define{\GGi} if $\ssa{g}{I}=I$ for all $g\in
G$. Since $G$ is a group, it is easy to see that this is equivalent to the
assumption that $\ssa{g}{I}\sub I$ for all $g\in G$.

The next result relates $S$-stability and $G$-invariance, and it shows
that in the strongly graded case, we can drop the adjective ``right''.

\begin{lem}\label{sg-good conditions}
Let $G$ be a group and suppose $S$ is a strongly $G$-graded ring with
$R=S_e$. If $I$ is an ideal of $R$, then the following statements are
equivalent.
\begin{enumerate}
\item
$I$ is right $S$-stable.
\item
$IS=SI$.
\item
$I$ is \GGi.
\end{enumerate}
\end{lem}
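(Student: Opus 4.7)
The plan is to prove the implications $(1) \Rightarrow (3) \Rightarrow (2) \Rightarrow (1)$, exploiting the strong grading property $S_gS_h = S_{gh}$ (in particular $S_gS_{g^{-1}} = S_{g^{-1}}S_g = R$) to move back and forth between homogeneous pieces.

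For $(2) \Rightarrow (1)$, there is nothing to do: $IS = SI$ trivially implies $SI \subseteq IS$.

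For $(1) \Rightarrow (3)$, I would use the fact that $IS = \bigoplus_{g \in G} I S_g$ is the homogeneous decomposition of $IS$, so the hypothesis $SI \subseteq IS$ restricted to the degree-$g$ component gives $S_g I \subseteq I S_g$ for every $g \in G$. Multiplying on the right by $S_{g^{-1}}$ and using strong grading yields
\[
\ssa{g}{I} \;=\; S_g I S_{g^{-1}} \;\subseteq\; I S_g S_{g^{-1}} \;=\; I R \;=\; I.
\]
Since this containment holds for every $g \in G$, the remark preceding the lemma (that $\ssa{g}{I}\subseteq I$ for all $g$ is already equivalent to $G$-invariance) gives $\ssa{g}{I} = I$ for all $g$.

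For $(3) \Rightarrow (2)$, suppose $S_g I S_{g^{-1}} = I$ for every $g \in G$. Multiplying on the right by $S_g$ and again using $S_{g^{-1}}S_g = R$ gives $S_g I = I S_g$ for every $g \in G$. Summing these equalities over all $g$ produces $SI = IS$, which is (2).

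No step is genuinely hard here; the only point requiring any care is bookkeeping of the one-sided containments and recognizing that strong grading is exactly what lets $S_{g^{-1}}$ act as a two-sided inverse to $S_g$ at the level of ideals. This is what converts the a priori one-sided inclusion $SI \subseteq IS$ into the two-sided conjugation statement $\ssa{g}{I} = I$, and vice versa.
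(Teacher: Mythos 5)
Your proof is correct and follows essentially the same route as the paper: both arguments reduce $SI\subseteq IS$ to the componentwise containments $S_gI\subseteq IS_g$, translate these into $\ssa{g}{I}\subseteq I$ via the strong grading identity $S_gS_{g^{-1}}=R$, and invoke the remark that this one-sided containment already forces $G$-invariance. Your handling of $(3)\Rightarrow(2)$ by directly deriving the equalities $S_gI=IS_g$ and summing is just a more explicit version of the paper's appeal to the ``left'' version of $(3)\iff(1)$.
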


\begin{proof}
$(1)\iff (3)$ Clearly $IS\normal S$ if and only if $SI\sub IS$. This
containment holds if and only if $S_gI\sub IS_g$ for all $g\in G$, which is
the case if and only if $\ssa{g}{I}\sub I$ for all $g\in G$.  By our remark
before the lemma, this is equivalent to the statement that $I$ is
\GGi.

$(3)\iff (2)$  This follows from $(3)\implies(1)$ and the ``left'' version of
$(3)\iff (1)$.
\end{proof}

If $I\normal R$, we define $\st{I}{G}=\cap_{g\in G}\ssa{g}{I}$.  It is
easy to see that $\st{I}{G}$ is the largest \GGi{} ideal of $R$ contained in
$I$.

\smallskip

We next give explicit conditions for $S$-primeness.  We begin with ideals.
The next result follows from Proposition~\ref{S-prime module sg} below,
by way of Lemma~\ref{S-prime ideal vs S-prime module}.  This result is
in fact true for any torsionfree group $G$; this follows from Corollary 4.6
in Passman \cite{Pas84} or Corollary 8.5 in Passman \cite{Pas89} after
passing to $R/I\sub S/IS$.

\begin{prop}\label{S-prime ideal sg}
Let $G$ be a \upg{} and suppose $S$ is a strongly $G$-graded ring with
$R=S_e$.  Let $I$ be a \GGi{} ideal of $R$. Then the following are
equivalent.
\begin{enumerate}
\item
$I$ is $S$-prime.
\item
If $a,b\in R$ and $S_gaS_{(hg)^{-1}}bS_h\sub I$ for all $g,h\in G$,
then $a\in I$ or $b\in I$.
\item
If $A,B$ are \GGi{} ideals of $R$ and $AB\sub I$, then $A\sub I$ or
$B\sub I$.
\qedsymbol
\end{enumerate}
\end{prop}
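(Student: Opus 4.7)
The plan is to reduce to $I = 0$ and then prove $(1) \Leftrightarrow (2) \Leftrightarrow (3)$ in that simplified setting. Since $I$ is $G$-invariant, $IS = SI$ by Lemma~\ref{sg-good conditions}, so we may pass to $\bar S = S/IS$, which is still strongly $G$-graded by our \upg{} with identity component $\bar R = R/I$; the three conditions translate cleanly under the quotient, using that the correspondence theorem preserves $G$-invariance. Thus the target becomes: for strongly $G$-graded $S$ with $R = S_e$, $S$ is prime iff ($2'$) the hypothesis of (2) with $I=0$ forces $a=0$ or $b=0$, iff ($3'$) any two nonzero $G$-invariant ideals of $R$ have nonzero product.

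For $(2') \Leftrightarrow (3')$ the key identity is $S_g a S_{(hg)^{-1}} b S_h = \ssa{g}{a}\cdot\ssa{h^{-1}}{b}$, obtained from $(hg)^{-1} = g^{-1}h^{-1}$ together with the strong grading $S_{g^{-1}} S_{h^{-1}} = S_{g^{-1}h^{-1}}$. From $(3')$: the $G$-invariant ideals generated by $a$ and $b$ multiply to zero under the hypothesis of $(2')$. From $(2')$: $G$-invariance sweeps any $a \in A$ and $b \in B$ into the hypothesis of $(2')$ whenever $AB = 0$. For $(1) \Rightarrow (2')$, I would specialize the hypothesis to $g = e$, giving $a S_{h^{-1}} b S_h = 0$, and then strip $S_h$ via the trick that $x \in R$ with $xS_h = 0$ satisfies $xR = xS_h S_{h^{-1}} = 0$, hence $x = 0$. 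This yields $aS_{h^{-1}}b = 0$ for all $h$, so $aSb = 0$, and primeness of $S$ concludes.

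The harder direction is $(3') \Rightarrow (1)$, which I would argue contrapositively. If $S$ is not prime, Lemma~\ref{graded prime}(1) --- this is where the \upm{} hypothesis is essential --- furnishes homogeneous nonzero $\alpha \in S_p$, $\beta \in S_q$ with $\alpha S_k \beta = 0$ for all $k \in G$. The plan is to pull these down into the identity component by setting $A_0 = S_{p^{-1}} \alpha R$ and $B_0 = R \beta S_{q^{-1}}$, both ordinary ideals of $R$, both nonzero because the strong grading identity $S_p S_{p^{-1}} = R$ gives $S_p A_0 \supseteq R\alpha \ni \alpha$ (and dually for $B_0$), and then to form the $G$-invariant ideals $A = \sum_g \ssa{g}{A_0}$ and $B = \sum_g \ssa{g}{B_0}$. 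A short calculation using $A_0 S_m B_0 = S_{p^{-1}} \alpha S_m \beta S_{q^{-1}} = 0$ shows $\ssa{g}{A_0} \cdot \ssa{h}{B_0} = 0$ for every $g, h$, hence $AB = 0$, contradicting $(3')$.

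The main obstacle is this last direction: fabricating nonzero $G$-invariant witnesses in $R$ from a bare witness to non-primeness of $S$. Both the \upg{} property (through Lemma~\ref{graded prime}, which lets us take the witness homogeneous) and the strong grading (through $S_p S_{p^{-1}} = R$, which lets us move a homogeneous witness back to the identity component) are indispensable, and the rest of the argument is essentially bookkeeping with $\ssa{g}{\cdot}$. As the excerpt notes, an alternative path runs through Proposition~\ref{S-prime ideal sg}'s module-theoretic cousin and Lemma~\ref{S-prime ideal vs S-prime module}; that route packages the same obstruction inside the module-level statement before specializing to $M = R/I$.
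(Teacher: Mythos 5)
Your proof is correct, but it takes a different route from the paper's. The paper gives no direct argument for Proposition~\ref{S-prime ideal sg}: it simply derives it from the module version, Proposition~\ref{S-prime module sg}, applied to $M=R/I$ via Lemma~\ref{S-prime ideal vs S-prime module}, and the hard implication is absorbed into the module proof (where a nonzero homogeneous $x$ of degree $g$ is pulled back to degree $e$ using $xS_{g^{-1}}S_g=xR$). Your argument instead stays entirely at the level of ideals: you first reduce to $I=0$ by passing to $S/IS$ (legitimate, since $I$ being \GGi{} makes $IS$ two-sided, $(IS)_e=I$, and $J\mapsto\ssa{g}{J}$ restricts to a bijection on ideals containing $I$, so \GGi{} ideals correspond), and you handle the hard direction $(3)\Rightarrow(1)$ contrapositively by manufacturing nonzero \GGi{} ideals $A=\sum_g\ssa{g}{(S_{p^{-1}}\alpha R)}$, $B=\sum_h\ssa{h}{(R\beta S_{q^{-1}})}$ with $AB=0$ from a homogeneous witness to non-primeness of $S$. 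Both proofs ultimately rest on the same two ingredients --- Lemma~\ref{graded prime} (which needs the \upg{} hypothesis to reduce to homogeneous witnesses) and the strong-grading identity $S_gS_{g^{-1}}=S_e$ (which moves witnesses into $R$ and shows sets like $S_{p^{-1}}\alpha R$ and $S_gaS_{g^{-1}}$ are genuine nonzero ideals of $R$) --- so the content is the same; your version is closer in spirit to Passman's original ideal-theoretic treatment and is self-contained, while the paper's route gets the ideal statement for free from the module statement it needs anyway for Proposition~\ref{ass MS sg}. One small notational caution: $\ssa{g}{a}$ is not defined in the paper for elements $a$, so in a final write-up you should write $S_gaS_{g^{-1}}$ (equal to $\ssa{g}{(RaR)}$ since $RS_g=S_g$) and note explicitly that $S_{(hg)^{-1}}=S_{g^{-1}}S_{h^{-1}}$ by strong grading.
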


\medskip

We now turn to $S$-modules.

\begin{lem}\label{ann of induced module sg}
Let $G$ be a \upg{} and suppose $S$ is a strongly $G$-graded ring with
$R=S_e$.  Let $M$ be an $R$-module with annihilator $J$.
\begin{enumerate}
\item
$\ann_S (M\otimes_R S)=\st{J}{G}S$.
\item
If $M$ is $S$-prime, then $\st{J}{G}$ is $S$-prime.
\end{enumerate}
\end{lem}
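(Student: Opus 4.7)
The plan is to set $N := M\otimes_R S$, a graded right $S$-module whose degree-$e$ component $N_e$ is identified with $M$ via $m\mapsto m\otimes 1$, and to combine the homogeneity of $\ann_S N$ (Corollary~\ref{homog annihilator}(2)) with the standard fact that any homogeneous ideal $I$ of a strongly graded ring satisfies $I = (I\cap R)S$. The latter I would verify on the spot: for $x\in I\cap S_g$, choose $x_i\in S_{g^{-1}}$ and $y_i\in S_g$ with $\sum x_iy_i = 1$ (available by strong grading), whence $x = \sum(xx_i)y_i\in(I\cap R)S$. With this reduction, proving (1) amounts to showing $\ann_S N\cap R = \st{J}{G}$.

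For the inclusion $\st{J}{G}\sub\ann_S N\cap R$: since $\st{J}{G}$ is \GGi, Lemma~\ref{sg-good conditions} gives $S\st{J}{G} = \st{J}{G}S$. Hence for any homogeneous $s\in S_g$ and $j\in\st{J}{G}$ we may rewrite $sj = \sum j_kt_k$ with $j_k\in\st{J}{G}$ and $t_k\in S$, so $(m\otimes s)\cdot j = \sum mj_k\otimes t_k = 0$ because $\st{J}{G}\sub J = \ann M$. Linearity in $s$ and in $j$ then yields $N\cdot\st{J}{G} = 0$.

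For the reverse inclusion: any $r\in\ann_S N\cap R$ annihilates the embedded copy $M\iso N_e$, so $r\in\ann M = J$. For any $g\in G$, the two-sided ideal $\ann_S N$ contains $S_{g^{-1}}rS_g$, and this product lies in $S_{g^{-1}}S_g = R$, so $\ann_S N\cap R$ is \GGi. Being a \GGi{} ideal of $R$ sitting inside $J$, it is contained in the largest such, namely $\st{J}{G}$.

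Part (2) then follows at once: if $M$ is $S$-prime, $\ann_S N$ is a prime ideal of $S$, and by (1) this prime ideal is $\st{J}{G}S$, which is the definition of $\st{J}{G}$ being $S$-prime. I foresee no serious obstacle; the one mildly delicate point is the extension-from-degree-$e$ identity for homogeneous ideals, which is a one-line consequence of the strong-grading relation $\sum x_iy_i = 1$.
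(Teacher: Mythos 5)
Your proof is correct, and it is essentially the paper's argument with the reverse inclusion repackaged. The paper proves $\st{J}{G}S\sub\ann_S(M\otimes_R S)$ by the same computation you perform (moving $\st{J}{G}$ past $S_g$ using $S_g\st{J}{G}\sub JS_g$), and for the reverse inclusion it works element-by-element: for homogeneous $s\in S_g$ killing the module, it shows directly that $S_{h^{-1}}sS_{g^{-1}}S_h\sub J$ for all $h$, i.e.\ $sS_{g^{-1}}\sub\st{J}{G}$, so $s\in\st{J}{G}S_g$. You reach the same conclusion more structurally, by first invoking the decomposition $I=(I\cap R)S$ for homogeneous ideals of a strongly graded ring and then observing that $\ann_S(M\otimes_R S)\cap R$ is a \GGi{} ideal contained in $J$, hence contained in the largest such, $\st{J}{G}$. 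The two routes encode the identical computation ($S_{g^{-1}}rS_g\sub R$ versus $S_{h^{-1}}sS_{g^{-1}}S_h\sub J$); yours isolates the reusable general facts (homogeneity of the annihilator, the $(I\cap R)S$ identity, maximality of $\st{J}{G}$) at the cost of having to verify the $(I\cap R)S$ identity, which you do correctly via $\sum x_iy_i=1$ with $x_i\in S_{g^{-1}}$, $y_i\in S_g$. Part (2) is handled identically in both.
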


\begin{proof}
(1) Let $I=\st{J}{G}$.  Then $(M\otimes S_g)I\sub M\otimes
S_gS_{g^{-1}}JS_g=MJ\otimes S_g=0$, so $(M\otimes_R S)IS=0$. This
shows $IS\sub\ann (M\otimes_R S)$.

For the reverse inclusion, suppose $(M\otimes_R S)s=0$ for some $s\in
S_g$. Then $M\otimes S_{h^{-1}}sS_{g^{-1}}S_{h}=0$, whence
$S_{h^{-1}}sS_{g^{-1}}S_{h}\sub J$.  Thus $sS_{g^{-1}}\sub
S_hJS_{h^{-1}}$ for all $h\in G$, and so $s\in IS_g$. This shows $\ann
(M\otimes_R S)\sub IS$.

(2) This follows immediately from (1).
\end{proof}

\begin{prop}\label{S-prime module sg}
Let $G$ be a \upg{} and suppose $S$ is a strongly $G$-graded ring with
$R=S_e$.  Let $M$ be a nonzero $R$-module.  Then the following
conditions are equivalent.
\begin{enumerate}
\item
$M$ is $S$-prime.
\item
If $m\in M$, $b\in R$, and $m(S_gbS_{g^{-1}})=0$ for all $g\in G$,
then $m=0$ or $Mb=0$.
\item
If $m\in M$, $B$ is a \GGi{} ideal, and $mB=0$, then $m=0$ or
$MB=0$.
\item
For every nonzero submodule $N$ of $M$, we have $\st{\ann
N}{G}=\st{\ann M}{G}$.
\end{enumerate}
\end{prop}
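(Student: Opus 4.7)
The plan is to prove the cycle $(1)\Rightarrow(2)\Rightarrow(3)\Rightarrow(4)\Rightarrow(1)$, combining the primeness criterion Lemma~\ref{graded prime}, the descent result Corollary~\ref{submods of homog mods}, and the annihilator formula Lemma~\ref{ann of induced module sg}.

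For $(1)\Rightarrow(2)$, assume $m\in M$, $b\in R$, and $mS_gbS_{g^{-1}}=0$ for all $g$. Multiplying on the right by $S_g$ and using that $S_{g^{-1}}S_g=R$ in the strongly graded case yields $mS_gb=0$ for each $g$, hence $mSb=0$. Since $\tilde M:=M\otimes_R S$ is prime, the cyclic-annihilator characterization of prime modules noted after the definition of prime gives $m=0$ or $\tilde M b=0$; in the latter case, restricting to the degree-$e$ component gives $Mb=0$. For $(2)\Rightarrow(3)$, if $B$ is \GGi{} and $mB=0$, then for each $b\in B$ and $g\in G$ we have $S_gbS_{g^{-1}}\sub B$ by $G$-invariance, so $mS_gbS_{g^{-1}}=0$; by (2), either $m=0$ or $Mb=0$, and if $m\ne 0$ this holds for every $b\in B$, hence $MB=0$.

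For $(3)\Rightarrow(4)$, the containment $\st{\ann M}{G}\sub\st{\ann N}{G}$ is immediate from $\ann M\sub\ann N$, and for the reverse, $B:=\st{\ann N}{G}$ is \GGi{} and annihilates any nonzero $n\in N$, so (3) forces $MB=0$, whence $B\sub\st{\ann M}{G}$. For $(4)\Rightarrow(1)$, let $N$ be a nonzero $S$-submodule of $\tilde M$; by the strongly graded version of Corollary~\ref{submods of homog mods}, $N$ contains an isomorphic copy of $bS$ for some nonzero $b\in M=\tilde M_e$, and this copy can be identified with the induced module $bR\otimes_R S$ via flatness of $S$ over $R$. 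Lemma~\ref{ann of induced module sg}(1) applied to the $R$-modules $M$ and $bR$ gives $\ann\tilde M=\st{\ann M}{G}S$ and $\ann(bS)=\st{\ann bR}{G}S$; hypothesis (4) applied to the submodule $bR\sub M$ yields $\st{\ann bR}{G}=\st{\ann M}{G}$, so $\ann(bS)=\ann\tilde M$. The chain $\ann\tilde M\sub\ann N\sub\ann(bS)=\ann\tilde M$ forces equality, proving primeness of $\tilde M$.

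I expect the main obstacle to be the step $(4)\Rightarrow(1)$, specifically the passage from the isomorphic copy of $bS$ furnished by Corollary~\ref{submods of homog mods} to the annihilator computation via Lemma~\ref{ann of induced module sg}, which requires identifying the embedded $bS$ with the induced $R$-module $bR\otimes_R S$. This is the only place where strong grading enters beyond formal $G$-invariance manipulations; the other three implications are routine applications of the $\st{\cdot}{G}$ construction and the cyclic-annihilator characterization of primeness.
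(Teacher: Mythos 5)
Your proof is correct, but you close the equivalence cycle in the opposite direction from the paper, and the difference is substantive in one place. The implications $(1)\Rightarrow(2)\Rightarrow(3)\Rightarrow(4)$ are routine in either direction (the paper does $(1)\Rightarrow(4)\Rightarrow(3)\Rightarrow(2)$ with essentially the same $\st{\cdot}{G}$ manipulations you use). The real divergence is in how primeness of $M\otimes_R S$ is re-established: the paper's hard step is $(2)\Rightarrow(1)$, proved via the graded primeness criterion (Lemma~\ref{graded prime}) together with an explicit computation shuffling factors $S_{g^{-1}}$, $S_{gi}$, $S_{h^{-1}}$ around a homogeneous element; your hard step is $(4)\Rightarrow(1)$, proved structurally by invoking Corollary~\ref{submods of homog mods} to find a copy of $bS\cong bR\otimes_R S$ with $b\in M$ of degree $e$ inside an arbitrary nonzero submodule, and then comparing annihilators via Lemma~\ref{ann of induced module sg}(1). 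Your route is arguably cleaner and avoids the degree bookkeeping entirely; it is also essentially the argument the paper defers to the proof of Proposition~\ref{ass MS sg}, so you are inlining that later reasoning here. The one point worth making explicit is the identification $bS\cong bR\otimes_R S$ and the injectivity of $N\otimes_R S\to M\otimes_R S$, both of which rest on $S$ being projective (hence flat) as a left $R$-module in the strongly graded case; the paper uses this silently as well, so it is a legitimate ingredient, but it deserves a sentence. With that noted, the argument is complete.
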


\begin{proof}
$(1)\implies (4)$ Suppose $M$ is $S$-prime and $N$ is a nonzero
submodule of $M$.  Then $\st{\ann N}{G}S=\ann (N\otimes_R S)=\ann
(M\otimes_R S)=\st{\ann M}{G}S$, so  $\st{\ann N}{G}=\st{\ann
M}{G}$.

$(4)\implies (3)$ Suppose (4) holds, let $m\in M$ be nonzero, and
suppose $mB=0$ for a \GGi{} ideal $B$.  Then $B\sub \st{\ann mR}{G}=
\st{\ann M}{G}$. This implies $MB=0$.

$(3)\implies (2)$  Suppose $m\in M$, $b\in R$, and
$m(S_gbS_{g^{-1}})=0$ for all $g\in G$.  Set $B=\sum_{g\in G}
S_gbS_{g^{-1}}$.  Then $B$ is a \GGi{} ideal of $R$ and $mB=0$. Thus
either $m=0$ or $MB=0$, and the latter equality implies $Mb=0$.

$(2)\implies (1)$  Suppose (2) holds and let $x\in M\otimes_R S$ be
(nonzero) homogeneous of degree $g$.  By Lemma~\ref{graded prime},
we need to show that if $xSs=0$ for a homogeneous $s\in S$, then
$(M\otimes_R S)s=0$. Let $h$ be the degree of $s$.  Then
$0=xS_isS_{(gih)^{-1}}=xS_{g^{-1}}S_{gi}sS_{h^{-1}}S_{(gi)^{-1}}$
for all $i\in G$. If $xS_{g^{-1}}=0$, then $x\in xR=xS_{g^{-1}}S_g=0$.
This is impossible, so there is a nonzero $m\in M$ with $m\otimes 1\in
xS_{g^{-1}}$. We can make $gi$ arbitrary, whence
$m(S_{j}sS_{h^{-1}}S_{j^{-1}})=0$ for all $j\in G$. It follows from (2)
that $MsS_{h^{-1}}=0$.  Since elements of $S_gsS_g^{-1}$ are also
homogeneous, the same argument implies $MS_gsS_{(hg)^{-1}}=0$ for
all $g\in G$. Thus $M\otimes S_gs=0$ for all $g$, whence $(M\otimes_R
S)s=0$.
\end{proof}

\medskip

We can use this result to describe the set of associated primes of induced
modules.

\begin{prop}\label{ass MS sg}
Let $G$ be a \upg{} and suppose $S$ is a strongly $G$-graded ring with
$R=S_e$.  Let $M$ be an $R$-module.  Then
\[
\Ass (M\otimes_R S) =
\{\,\st{J}{G}S\mid J\text{ is the annihilator of an $S$-prime submodule of
}M\,\}.
\]
If $R$ \hasacc, then $\Ass (M\otimes_R S) =
\{\,\st{J}{G}S\mid J\in \Ass M\,\}$.
\end{prop}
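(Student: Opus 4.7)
The plan is to combine the homogeneity results of Section~\ref{Definitions and basic results} with the annihilator formula of Lemma~\ref{ann of induced module sg} and the characterization of $S$-primeness in Proposition~\ref{S-prime module sg}.

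For the first equality, the $(\supseteq)$ direction is immediate: if $N\sub M$ is an $S$-prime $R$-submodule with $J=\ann_R N$, then in the strongly graded case the natural map $N\otimes_R S\to M\otimes_R S$ is an embedding whose image is a prime submodule (by the definition of $S$-prime) with annihilator $\st{J}{G}S$ by Lemma~\ref{ann of induced module sg}, so $\st{J}{G}S\in\Ass(M\otimes_R S)$. For the reverse inclusion, take $P\in\Ass(M\otimes_R S)$. By Corollary~\ref{anns of prime submods of homog mods}, $P=\ann_S N'$ for some graded prime submodule $N'\sub M\otimes_R S$, and by Corollary~\ref{submods of homog mods} in its strongly graded form, $N'$ contains an isomorphic copy of $bS$ for some nonzero degree-$e$ element $b$, so $b\in M$. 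Since a nonzero submodule of a prime module is itself prime with the same annihilator, $bS$ is a prime $S$-module with $\ann_S bS=P$. The isomorphism $(bS)_e\otimes_R S\iso bS$, available because $S$ is strongly graded, yields $bS\iso bR\otimes_R S$, so $bR\sub M$ is an $S$-prime $R$-submodule, and Lemma~\ref{ann of induced module sg} gives $P=\st{\ann_R bR}{G}S$.

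For the second equality, assume $R$ \hasacc. If $J\in\Ass M$, then $J=\ann_R P$ for some prime $R$-submodule $P\sub M$; since every nonzero submodule of $P$ has annihilator exactly $J$, condition (4) of Proposition~\ref{S-prime module sg} holds trivially for $P$, so $P$ is $S$-prime, and the first equality yields $\st{J}{G}S\in\Ass(M\otimes_R S)$. Conversely, the first equality writes an arbitrary element of $\Ass(M\otimes_R S)$ as $\st{J_0}{G}S$ with $J_0=\ann_R N_0$ for some $S$-prime $R$-submodule $N_0\sub M$; such $N_0$ need not itself be $R$-prime, so this direction is the main obstacle. The a.c.c.\ on ideals of $R$ lets us choose $N_0'\sub N_0$ whose annihilator is maximal among those of nonzero submodules of $N_0$; such $N_0'$ is a prime $R$-module, so $J:=\ann_R N_0'\in\Ass M$. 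Applying Proposition~\ref{S-prime module sg}(4) to the $S$-prime module $N_0$ with the submodule $N_0'$ yields $\st{J}{G}=\st{J_0}{G}$ --- exactly the statement that passing to the prime refinement is invisible to $\st{\cdot}{G}$ --- completing the equality.
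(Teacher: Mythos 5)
Your proof is correct and follows essentially the same route as the paper: Corollary~\ref{submods of homog mods} (strongly graded form) produces a degree-$e$ generator $b$ with $bR\otimes_R S$ prime, Lemma~\ref{ann of induced module sg} identifies the annihilator as $\st{\ann_R bR}{G}S$, and under the a.c.c.\ a submodule with maximal annihilator together with Proposition~\ref{S-prime module sg}(4) replaces $\ann_R bR$ by a genuine associated prime of $M$. You merely spell out the easy containments (flatness of $S$ over $R$, prime implies $S$-prime) that the paper leaves implicit.
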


\begin{proof}
Let $M$ be an $R$-module and $N$ be a prime submodule of
$M\otimes_R S$.  Since $S$ is strongly graded, Corollary~\ref{submods
of homog mods} implies there is a nonzero $a\in M$ such that $(a\otimes
1)S=aR\otimes_R S$ is isomorphic to a submodule of $N$. Set $\ann_R
aR=J$; since $N$ is prime, Lemma~\ref{ann of induced module sg} tells
us $\ann N=\ann (a\otimes 1)S= \st{J}{G}S$.

If $R$ \hasacc, the module $aR$ contains a nonzero submodule $K$ with
maximal annihilator.  In particular, $K$ is prime and $\ann K\in\Ass M$.
By Proposition~\ref{S-prime module sg}, $\st{J}{G}=\st{\ann K}{G}$.
\end{proof}

Since all graded modules are induced, we could re-write
Proposition~\ref{ass MS sg} as follows. If $N$ is a graded $S$-module,
then $\Ass N =
\{\,\st{J}{G}S\mid J\text{ is the annihilator of an $S$-prime submodule of
}N_e\,\}$.

\section{The crossed product case}
\label{The crossed product case}

In this section, we assume $S=\skewm{R}{G}$ is a crossed product. We
will give conditions for primeness of $IS$ and $M\otimes_R S$, and we
will show that prime submodules of $M\otimes_R S$ contain submodules
of the form $L\otimes_R S$, but in the case $S=\skewm{R}{G}$, the
subset $L$ is not necessarily a submodule of $M$. Instead, it might be a
twist of a submodule over a subring $\ssa{g}{R}$ of $R$.  If \psurj, we can
take $L$ to be an $S$-prime submodule of $M$ in the presence of an
extra reversibility hypothesis.

The crossed product case is the most complicated case considered in this
paper, and there are many subsidiary hypotheses one can make to get
better results.  Among these assumptions are commutativity of $R$,
surjectivity of the maps $\sigma(g)$, the a.c.c.\ on ideals of $R$, and a
reversibility condition on $G$ and $\sigma$ defined below (which holds,
e.g., if $G$ or the image $\sigma(G)$ is commutative).   In each of the
three parts of the chapter (primeness of ideals, primeness of modules,
associated primes), we prove a general statement with no extra
hypotheses and then we prove statements in the presence of some
combination of extra hypotheses.  In Section~\ref{Examples}, we give
examples showing that (some) of our results fail without these extra
hypotheses.

For example, Proposition~\ref{S-prime module for R*G} gives necessary
and sufficient conditions for the induced module
$\skewm{M}{G}=M\otimes_R S$ to be a prime $S$-module. One
condition is an elementwise condition and another is an idealwise condition.
In Corollary~\ref{S-prime module for R*G second}, we show that
primeness implies other conditions that are then shown in
Corollary~\ref{S-prime module R*G comm or surj} to be equivalent in the
presence of reversibility when $R$ is commutative or \psurj{}.  In
Corollary~\ref{S-prime module R*G acc} essentially these same
conditions are shown to be equivalent when $R$ \hasacc{} and \psurj{}.

\medskip

A crossed product $S=\skewm{R}{G}$ is a ring that is determined as
follows by a $4$-tuple $(R,G,\sigma,c)$, where $R$ is a ring, $G$ is a
monoid with identity $e$, and $\sigma:G\to\End_{\text{Ring}}(R)$ and
$c:G\times G\to GL_1(R)$ are functions.  We denote $\sigma(g)(r)$ by
$\ssa{g}{r}$.
\begin{enumerate}
\item
The ring $S=\oplus_{g\in G} R\basis{g}$ is a free left $R$-module with
basis $\{\,\basis{g}\smid g\in G\,\}$;
\item
Multiplication is determined by the relations
$\basis{g}r=\ssa{g}{r}\basis{g}$ and
$\basis{\vphantom{h}g}\mspace{1mu}\basis{h}=c(g,h)\basis{gh}$;
\item
The following conditions are satisfied for all $r\in R$, $g,h,i\in G$:
\begin{enumerate}[(a)]
\item
$\ssa{e}{r}=r$;
\item
$c(g,e)=c(e,g)=1$;
\item
$\ssaa{g}{h}{r}c(g,h)=c(g,h)\ssa{gh}{r}$.
\item
$c(g,h)c(gh,i)=\ssa{g}{c(h,i)}c(g,hi)$.
\end{enumerate}
\end{enumerate}
Conditions (c) and (d) guarantee that the multiplication in $S$ is
associative and conditions (a) and (b) guarantee that $1\basis{e}$ is the
multiplicative identity of $S$. The condition that the values $c(g,h)$ are
units is not required for $S$ to be a ring with identity, but this condition is
customary and is necessary for our later results to hold.  See Passman
\cite[pp.~1--3]{Pas89} for more details on crossed products, but note that
Passman writes coefficients on the right of basis elements, that is, he has
$S=\oplus_{g\in G} \basis{g}R$.  Thus his formulas are generally the
``reverse'' of ours.

If $M$ is a right $R$-module, the induced $S$-module $M\otimes_R S$
can be identified with $\oplus_{g\in G} M\basis{g}$ with multiplication
$m\basis{g}\cdot r\basis{h}=m\ssa{g}{r}c(g,h)\basis{gh}$.  We will
denote this $S$-module by $\skewm{M}{G}$.

\smallskip

Condition (c) above can be stated as
$\sigma(g)\cp\sigma(h)=\tau_{g,h}\cp\sigma(gh)$ for all $g,h\in G$,
where $\tau_{g,h}$ is the inner automorphism defined as conjugation by
$c(g,h)$.

For any $X\sub R$, let us define $\ssa{g}{X}=\{\,\ssa{g}{x}\smid x\in
X\,\}$.   We say \define{\psurj} if $\sigma(g)$ is surjective for all $g\in
G$. Note that \psurj{} if and only if $I^g\normal R$ whenever $I\normal
R$. Since an ideal is invariant under inner automorphisms (and inner
automorphisms are trivial when $R$ is commutative), we get the simpler
statement $\ssaa{g}{h}{I}=\ssa{gh}{I}$ for ideals $I$ when either
\psurj{} or $R$ is commutative.

\smallskip

For $g,h\in G$, define $g\equiv h$ if there is an inner automorphism
$\tau$ such that $\sigma(g)=\tau\cp\sigma(h)$.  The relation $\equiv$ is
a congruence, that is, it is an equivalence relation and whenever $g\equiv
i$, $h\equiv j$ hold, we have $gh\equiv ij$.  We say \define{\rev} if for
any $g,h\in G$, there exist $h_1,h_2\in G$ such that $gh\equiv h_1g$
and $hg\equiv gh_2$.  We refer to this condition as ``\revsh'' for short.
\Revsh{} is equivalent to: for all $g,h\in G$, there exist $h_1,h_2\in
G$ such that
$\sigma(g)\cp\sigma(h)=\tau_1\cp\sigma(h_1)\cp\sigma(g)$ and
$\sigma(h)\cp\sigma(g)=\tau_2\cp\sigma(g)\cp\sigma(h_2)$ for some
inner automorphisms $\tau_1,\tau_2$ of $R$.  A way to symbolically
state that \rev{} is ``$\Inn R\cp\sigma(gG)=\Inn R\cp\sigma(Gg)$'' for
all $g\in G$.

If either $R$ is commutative or \psurj, we can eliminate the conjugation
when operating on ideals and thus if \rev{}, then for all $g,h\in G$ and all
$I\normal R$, there exist $h_1,h_2\in G$ such that
$\ssa{gh}{I}=\ssa{h_1g}{I}$ and $\ssa{hg}{I}=\ssa{gh_2}{I}$. If
$\sigma(h)$ is always the identity, e.g., in the monoid ring case, then \rev.
\Revsh{} is also satisfied if $gG=Gg$ for all $g$, which occurs if $G$ is a
group or $G$ is commutative. Another example of a monoid $G$ with
$gG=Gg$ for all $g\in G$ is the monoid generated by $x,y,z,z^{-1}$
subject to the relations that $z$ is central and $yx=xyz$. This monoid
embeds in a torsionfree nilpotent group, so it is a \upm. However, if we
leave out $z^{-1}$ in the definition of $G$, we get a \upm{} that does not
satisfy $Gg=gG$.

\medskip

The terminology used to describe whether a map takes an ideal into itself
or does something stronger is rather variable in the literature.  We will
adopt the following terminology.  We say an ideal $I$ of $R$ is
\define{\GGs} if $\ssa{g}{I}\sub I$ for all $g\in G$, and we say $I$ is
\define{\GGi} if $\sigma(g)^{-1}(I)=I$ for all $g\in G$.

\begin{lem}\label{s-good conditions for R*G}
Let $I$ be an ideal of $R$ and $S=\skewm{R}{G}$.  Then $I$ is right
$S$-stable if and only if $I$ is \GGs.
\end{lem}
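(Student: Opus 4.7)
The plan is to unwind the definitions using the fact that in a crossed product $S=\skewm{R}{G}=\oplus_{g\in G} R\basis{g}$, the subsets $IS$ and $SI$ decompose nicely along the free left $R$-module basis $\{\basis{g}\}$. First I will note that $IS=\oplus_{g\in G} I\basis{g}$: since the $\basis{g}$ form a free left $R$-basis and $I$ is a two-sided ideal of $R$, every element of $IS$ has a unique expansion with coefficients in $I$. This decomposition is the main tool in both directions.

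For the forward direction, suppose $SI\sub IS$. Fix $g\in G$ and $r\in I$. Then $\basis{g}r\in SI\sub IS$, and using the defining relation $\basis{g}r=\ssa{g}{r}\basis{g}$ together with the unique decomposition of $IS$ above, the $\basis{g}$-component $\ssa{g}{r}$ must lie in $I$. Since $r\in I$ and $g\in G$ were arbitrary, $\ssa{g}{I}\sub I$ for all $g$, i.e., $I$ is \GGs.

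For the reverse direction, suppose $\ssa{g}{I}\sub I$ for every $g\in G$. An arbitrary element of $SI$ is a finite sum of terms of the form $(r\basis{g})x$ with $r\in R$, $g\in G$, $x\in I$. Using $\basis{g}x=\ssa{g}{x}\basis{g}$, this becomes $r\ssa{g}{x}\basis{g}$. The hypothesis gives $\ssa{g}{x}\in I$, and since $I$ is a two-sided ideal of $R$, $r\ssa{g}{x}\in I$. Thus each summand lies in $I\basis{g}\sub IS$, so $SI\sub IS$, which is exactly right $S$-stability.

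There is no real obstacle here; the argument is bookkeeping with the crossed product multiplication. The only point that requires any attention is making sure the decomposition $IS=\oplus_{g\in G} I\basis{g}$ is invoked explicitly, so that extracting the $\basis{g}$-coefficient in the forward direction is justified. Everything else follows immediately from the relation $\basis{g}r=\ssa{g}{r}\basis{g}$ and the two-sidedness of $I$ in $R$.
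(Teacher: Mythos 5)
Your proof is correct and follows essentially the same route as the paper: both arguments reduce the containment $SI\sub IS$ to a degree-by-degree comparison, using the relation $\basis{g}r=\ssa{g}{r}\basis{g}$ and the decomposition $IS=\oplus_{g\in G}I\basis{g}$ coming from the free left $R$-basis. You simply spell out the bookkeeping in more detail than the paper does.
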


\begin{proof}
If we look at terms of degree $g$, we see $SI\sub IS$ if and only if
$\ssa{g}{I}\basis{g}=\basis{g}I\sub I\basis{g}$ for all $g\in G$.  This
last containment holds if and only if $\ssa{g}{I}\sub I$ for all $g\in G$.
\end{proof}

If $X\sub R$, we define
\[
\st{X}{G}=\cap_{g\in G}\,\sigma(g)^{-1}(X) =
\{\,r\in R\mid \ssa{g}{r}\in X\text{ for all }g\in G\,\}.
\]
If $X$ is an ideal of $R$, so is $\st{X}{G}$. It is not hard to see that
$\st{X}{G}$ is the largest \GGs{} subset of $R$ contained in $X$.

\smallskip

We now state two results that tell us certain \GGs{} ideals are necessarily
\GGi.  The first result is a standard application of the ascending chain condition.

\begin{lem}\label{s-stable implies s-good with acc}
Suppose $S=\skewm{R}{G}$, \psurj, and $R$ \hasacc. If $I$ is a \GGs{}
ideal of $R$, then $I$ is \GGi{} and $\ssa{g}{I}=I$ for all $g\in G$.
\qedsymbol
\end{lem}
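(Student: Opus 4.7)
Fix $g\in G$ and write $\tau=\sigma(g)$ for brevity; by hypothesis $\tau$ is a surjective ring endomorphism of $R$ with $\tau(I)\sub I$. The goal is to show $\tau^{-1}(I)=I$, since then $\ssa{g}{I}=\tau(I)=\tau(\tau^{-1}(I))=I$ (the last equality using surjectivity of $\tau$); and this for every $g$ gives both conclusions.

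The natural first move would be to iterate $\tau(I)\sub I$ and look at the decreasing chain $I\supseteq\tau(I)\supseteq\tau^2(I)\supseteq\cdots$, but we have only the a.c.c.\ at our disposal. So instead I would form the ascending chain of preimages
\[
I\sub \tau^{-1}(I)\sub \tau^{-2}(I)\sub\cdots,
\]
which is indeed ascending because if $\tau^k(r)\in I$ then $\tau^{k+1}(r)=\tau(\tau^k(r))\in\tau(I)\sub I$. By the a.c.c.\ this stabilizes: there exists $n\ge 0$ with $\tau^{-n}(I)=\tau^{-(n+1)}(I)$.

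The crux of the argument is to parlay this into $\tau^{-1}(I)=I$, and the trick is to use that $\tau^n$ is still surjective. Take any $r\in\tau^{-1}(I)$; since $\tau^n$ is surjective, choose $s\in R$ with $\tau^n(s)=r$. Then $\tau^{n+1}(s)=\tau(r)\in I$, so $s\in\tau^{-(n+1)}(I)=\tau^{-n}(I)$, which means $r=\tau^n(s)\in I$. Combined with the opposite inclusion $I\sub\tau^{-1}(I)$ (immediate from $\tau(I)\sub I$), this gives $\tau^{-1}(I)=I$, as desired.

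The only real obstacle here is recognizing that the a.c.c.\ should be applied to the chain of preimages rather than the (more obvious) chain of images $\tau^k(I)$, and then spotting that surjectivity of $\tau^n$ is exactly what is needed to pull the stabilization back down to level $1$. Everything else is formal. Note also that surjectivity is essential for $\ssa{g}{I}$ to be an ideal in the first place, a point already established in the remarks preceding the lemma.
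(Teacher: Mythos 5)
Your proof is correct, and it is precisely the ``standard application of the ascending chain condition'' that the paper alludes to in lieu of a written proof: apply the a.c.c.\ to the ascending chain of preimages $\sigma(g)^{-1}(I)\sub\sigma(g)^{-2}(I)\sub\cdots$ and use surjectivity of the iterates to pull the stabilization down to $\sigma(g)^{-1}(I)=I$. Nothing is missing.
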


If $I$ is $S$-prime, Lemma~\ref{s-good conditions for R*G} implies $I$
is \GGs.  The next lemma shows that often an $S$-prime ideal is
necessarily \GGi.  Examples~\ref{reversibility necessary for invariance
simpler} and~\ref{reversibility necessary for invariance auto} show that
some condition along the lines of \revsh{} must be imposed to make this
true.

\begin{lem}\label{S-prime implies s-good for R*G}
Suppose $S=\skewm{R}{G}$ and suppose that \rev.
\begin{enumerate}
\item
If $I$ is a right $S$-prime ideal of $R$, then $I$ is \GGi.
\item
If $M$ is an $S$-prime $R$-module, then $\st{\ann M}{G}$ is \GGi.
\end{enumerate}
\end{lem}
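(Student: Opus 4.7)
The plan is to prove (1) directly and then derive (2) as a corollary by identifying $\st{\ann M}{G}$ as a right $S$-prime ideal of $R$.

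For (1), suppose $I$ is a right $S$-prime ideal. Being right $S$-stable, $I$ is $G$-stable by Lemma~\ref{s-good conditions for R*G}, so $I\sub\sigma(g)^{-1}(I)$ holds for every $g\in G$; what I need is the reverse containment, i.e., if $\ssa{g}{r}\in I$ then $r\in I$. The crucial preliminary step is to bootstrap $\ssa{g}{r}\in I$ up to $\ssa{gh}{r}\in I$ for every $h\in G$. By the equivalent formulation of reversibility stated just after the definition, there are $h_1\in G$ and an inner automorphism $\tau_1$ of $R$ with $\sigma(g)\cp\sigma(h)=\tau_1\cp\sigma(h_1)\cp\sigma(g)$; combining this with the cocycle identity $\sigma(g)\cp\sigma(h)=\tau_{g,h}\cp\sigma(gh)$ yields $\sigma(gh)=\tau\cp\sigma(h_1)\cp\sigma(g)$ for some inner automorphism $\tau$. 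Hence $\ssa{gh}{r}=\tau(\ssa{h_1}{\ssa{g}{r}})$; since $I$ is $G$-stable the argument of $\tau$ lies in $I$, and since $I$ is two-sided it is invariant under inner automorphisms, so $\ssa{gh}{r}\in I$.

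The rest of (1) is a standard primeness sandwich. The computation $\basis{g}\cdot a\basis{h}\cdot r=\ssa{g}{a}\,c(g,h)\,\ssa{gh}{r}\,\basis{gh}$ together with $\ssa{gh}{r}\in I$ for all $h\in G$ shows $\basis{g}\,S\,r\sub IS$. Primeness of $IS$ then forces $\basis{g}\in IS$ or $r\in IS$. But $IS=\bigoplus_{h\in G}I\basis{h}$, so $\basis{g}\in IS$ would force $1\in I$, hence $IS=S$, contradicting the primeness of $IS$. Therefore $r\in IS\cap R=I$, as desired. For (2), a direct computation with a homogeneous element $a\basis{h}$ annihilating $M\otimes_R S$ (using that the cocycle values $c(h,g)$ are units) shows $\ann_S(M\otimes_R S)=\st{\ann M}{G}\,S$. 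Since $M\otimes_R S$ is a prime $S$-module, this is a prime ideal of $S$, so $\st{\ann M}{G}$ is right $S$-prime; part (1) applied to this ideal then gives $G$-invariance.

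The main obstacle I anticipate is the reversibility step in (1): one has to shift $\sigma(g)$ past an arbitrary $\sigma(h)$ so that the $G$-stability of $I$ can be applied to $\ssa{g}{r}$ rather than to $r$ itself, with the leftover conjugation absorbed by the two-sidedness of $I$. The orientation $gh\equiv h_1 g$ in the definition of reversibility is precisely what makes this shift possible, and without it the sandwich $\basis{g}\,S\,r\sub IS$ cannot be obtained from $\ssa{g}{r}\in I$ alone; the examples referenced in the statement are expected to confirm that this hypothesis cannot simply be dropped.
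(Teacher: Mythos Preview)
Your proof is correct and follows essentially the same line as the paper's: use reversibility together with $G$-stability of $I$ to see that every $\ssa{gh}{r}$ lies in $I$, then sandwich $\basis{g}\,S\,r\sub IS$ and invoke primeness. The only organizational difference is that the paper proves (2) directly (working with $I=\st{\ann M}{G}$ and the ideal $A=\sum_{h,i}R\basis{hg\imath}=S\basis{g}S$ in place of your single element $\basis{g}$) and then deduces (1), whereas you reverse this order and obtain (2) from (1) via $\ann_S(M\otimes_R S)=\st{\ann M}{G}\,S$.
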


\begin{proof}
(2) Let $I=\st{\ann M}{G}$ and suppose $\ssa{g}{r}\in I$ for some $g\in
G, r\in R$.  Set $A=\sum_{h,i\in G} R\basis{hg\imath}$.  It is easy to see
that $A\normal S$, and $A\notsub IS$ since $1\notin I$.

By the reversibility condition, there exists $i'\in G$ such that $gi\equiv
i'g$. Thus there exists a unit $u$ such that
$\basis{hg\imath}r=\ssa{hg\imath}{r}\basis{hg\imath}=
u\ssaa{h\imath'}{g}{r}u^{-1}\basis{hg\imath}$. The coefficient
$u\ssaa{h\imath'}{g}{r}u^{-1}\in I$ because $I$ is
\GGs. It follows that $Ar\sub IS$.  Since $IS$ is prime, this implies $r\in
I$.

(1) This follows from (2).
\end{proof}

The next lemma that shows another way in which \revsh{} can be useful.

\begin{lem}\label{preservation of stability}
Suppose $S=\skewm{R}{G}$, $I$ is a \GGs{} ideal of $R$, $g\in G$, and
\rev.  Then the ideals $RI^gR$ and $\sigma(g)^{-1}(I)$ are \GGs.
\end{lem}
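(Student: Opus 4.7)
The plan is to verify each claim by direct chasing of elements, using the two halves of the reversibility condition as translated at the level of the $\sigma(h)$'s, plus the fact that inner automorphisms fix ideals.

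First I would tackle $\sigma(g)^{-1}(I)$, which I expect to be the easier of the two because it only needs a preimage-style computation. Fix $h\in G$ and take $r\in\sigma(g)^{-1}(I)$, so $\ssa{g}{r}\in I$. I want to show $\ssa{h}{r}\in\sigma(g)^{-1}(I)$, i.e.\ that $\sigma(g)\cp\sigma(h)(r)\in I$. By the first half of \revsh{} (as restated in the paragraph after the definition), there exist $h_1\in G$ and an inner automorphism $\tau_1$ of $R$ with $\sigma(g)\cp\sigma(h)=\tau_1\cp\sigma(h_1)\cp\sigma(g)$. Applying this to $r$ gives $\sigma(g)\cp\sigma(h)(r)=\tau_1(\ssaa{h_1}{g}{r})$. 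Because $\ssa{g}{r}\in I$ and $I$ is \GGs, we have $\ssaa{h_1}{g}{r}\in I$; since $I$ is an ideal, $\tau_1$ sends $I$ into $I$. Hence $\sigma(g)\cp\sigma(h)(r)\in I$, as needed.

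For $RI^gR$, I would similarly fix $h\in G$ and compute $\sigma(h)(RI^gR)\sub R\cdot\sigma(h)(I^g)\cdot R$ (using that $\sigma(h)$ is a ring endomorphism and that $RI^gR$ consists of finite sums of products $r_1 s r_2$ with $s\in I^g$). Now $\sigma(h)(I^g)=\sigma(h)\cp\sigma(g)(I)$, and the second half of \revsh{} gives $h_2\in G$ and an inner automorphism $\tau_2$ with $\sigma(h)\cp\sigma(g)=\tau_2\cp\sigma(g)\cp\sigma(h_2)$. Since $I$ is \GGs, $\sigma(h_2)(I)\sub I$, hence $\sigma(h)\cp\sigma(g)(I)\sub\tau_2(\sigma(g)(I))=\tau_2(I^g)=uI^gu^{-1}$ for some unit $u\in R$. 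Therefore $\sigma(h)(RI^gR)\sub R\cdot uI^gu^{-1}\cdot R\sub RI^gR$, proving $RI^gR$ is \GGs.

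The only real subtlety, and the place I would be careful, is recognizing that the inner automorphisms $\tau_1,\tau_2$ introduced by reversibility do no harm: they act by conjugation by a unit of $R$, so they preserve the ideal $I$ in the first computation and are absorbed into the two-sided product $R(\cdot)R$ in the second. No additional hypothesis (surjectivity of the $\sigma(g)$, commutativity of $R$, or the a.c.c.) is needed, which matches the statement of the lemma.
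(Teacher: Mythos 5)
Your proof is correct and follows essentially the same route as the paper's: for $\sigma(g)^{-1}(I)$ you use $gh\equiv h_1g$ to rewrite $\sigma(g)\cp\sigma(h)$ as $\tau_1\cp\sigma(h_1)\cp\sigma(g)$, and for $RI^gR$ you use $hg\equiv gh_2$ to rewrite $\sigma(h)\cp\sigma(g)$ as $\tau_2\cp\sigma(g)\cp\sigma(h_2)$, absorbing the inner automorphisms exactly as the paper does. No gaps.
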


\begin{proof}
Set $A=RI^gR$ and $B=\sigma(g)^{-1}(I)$.  Suppose $h\in G$ and let
$h'\in G$ satisfy $hg\equiv gh'$.  Then for some unit $u\in R$, we have
$A^h=R^h(I^g)^hR^h=R^hu(I^{h'})^gu^{-1}R^h\sub RI^gR=A$,
where the containment holds because $I$ is \GGs.

Suppose that $b\in B$, so $b^g\in I$, and $h\in G$.   Let $h'\in G$ satisfy
$gh\equiv h'g$. Then for some unit $u\in R$, we have
$(b^h)^g=u(b^g)^{h'}u^{-1}\in uI^{h'}u^{-1}\sub I$, where the
containment holds because $I$ is \GGs.  Thus $b^h\in B$.
\end{proof}

\medskip

We now give explicit conditions for $S$-primeness.

\begin{prop}\label{S-prime ideal R*G}
Let $G$ be a \upm, $I$ an ideal of $R$, and $S=\skewm{R}{G}$.  If $I$
is \GGs, then the following conditions are equivalent.
\begin{enumerate}
\item
$I$ is right $S$-prime.
\item
If $a,b\in R$, $g\in G$, and $a\ssa{g}{R}\ssaa{g}{h}{b}\sub I$ for all
$h\in G$, then $a\in I$ or $\ssa{h}{b}\in I$ for all $h\in G$.
\item
If $A$ is a left ideal of $R$, $B$ is a \GGs{} ideal of $R$, and
$A\ssa{g}{B}\sub I$ for some $g\in G$, then $A\sub I$ or $B\sub I$.
\end{enumerate}
If $R$ is commutative or \psurj, we can replace ``left ideal'' by ``ideal'' in
(3).
\end{prop}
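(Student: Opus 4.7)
The plan is to establish $(1)\iff(2)$ directly from the graded-prime criterion of Lemma~\ref{graded prime} applied to $IS$, and to prove $(2)\iff(3)$ by passing between the elementwise condition and the idealwise one via the smallest \GGs{} ideal generated by $b$.

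For $(1)\iff(2)$: since $I$ is \GGs, Lemma~\ref{s-good conditions for R*G} gives that $IS$ is an ideal, and $IS=\bigoplus_{g\in G}I\basis{g}$ is clearly homogeneous. By Lemma~\ref{graded prime}(1), $IS$ is prime iff whenever $a'=\alpha\basis{g_0}$ and $b'=\beta\basis{h_0}$ are homogeneous and $a'S_kb'\sub IS$ for all $k\in G$, we have $a'\in IS$ or $b'\in IS$. I unfold $\alpha\basis{g_0}\cdot r\basis{k}\cdot\beta\basis{h_0}$ using $\basis{g}r=\ssa{g}{r}\basis{g}$ and condition (c) to move $\ssa{g_0k}{\beta}$ past $c(g_0,k)$, obtaining $\alpha\ssa{g_0}{r}\ssaa{g_0}{k}{\beta}\cdot c(g_0,k)c(g_0k,h_0)\basis{g_0kh_0}$; since the two $c$-factors are units of $R$, the containment collapses to $\alpha\ssa{g_0}{R}\ssaa{g_0}{k}{\beta}\sub I$ for all $k$. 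Meanwhile $a'\in IS\iff\alpha\in I$ and $b'\in IS\iff\beta\in I$, and since $I$ is \GGs, the latter is equivalent to $\ssa{h}{\beta}\in I$ for all $h$ (with $h=e$ recovering $\beta$). Relabeling $(g_0,k,\alpha,\beta)\to(g,h,a,b)$ and discarding the unused $h_0$ gives exactly (2).

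For $(2)\implies(3)$: given $A$ a left ideal, $B$ a \GGs{} ideal, $A\ssa{g}{B}\sub I$, and $B\not\sub I$, pick $b\in B\smin I$. Because $B$ is \GGs, $\ssa{g}{R}\ssaa{g}{h}{b}\ssa{g}{R}\sub\ssa{g}{B}$ for every $h$, so each $a\in A$ satisfies $a\ssa{g}{R}\ssaa{g}{h}{b}\sub I$ (taking $\ssa{g}{1}=1$ in the last factor). Condition (2) then forces $a\in I$ or $\ssa{h}{b}\in I$ for all $h$; the second fails at $h=e$ since $b\notin I$, so $A\sub I$. For $(3)\implies(2)$: given $a,b,g$ with $a\ssa{g}{R}\ssaa{g}{h}{b}\sub I$ for all $h$, set $A=Ra$ (a left ideal containing $a$) and $B=\sum_{h\in G}R\ssa{h}{b}R$. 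Condition (c) yields $\ssaa{k}{h}{b}=c(k,h)\ssa{kh}{b}c(k,h)^{-1}\in R\ssa{kh}{b}R\sub B$, so $B$ is \GGs. A typical generator $r_0a\ssa{g}{r_1}\ssaa{g}{h}{b}\ssa{g}{r_2}$ of $A\ssa{g}{B}$ lies in $R\cdot I\cdot R\sub I$ by hypothesis, so applying (3) gives $a\in A\sub I$ or $\ssa{h}{b}\in B\sub I$ for all $h$.

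For the final statement, I verify that when $R$ is commutative or \psurj, the converse $(3)\implies(2)$ still works with $A=RaR$ replacing $Ra$. In the surjective case, a factor $r_2\ssa{g}{r_3}$ in the product $r_1ar_2\ssa{g}{r_3}\ssaa{g}{h}{b}\ssa{g}{r_4}$ can be rewritten as $\ssa{g}{sr_3}\in\ssa{g}{R}$ by choosing $s$ with $\ssa{g}{s}=r_2$; in the commutative case, one commutes factors to expose $a\ssaa{g}{h}{b}\in I$. Either way one obtains $RaR\cdot\ssa{g}{B}\sub I$, and the rest of the argument goes through unchanged. The main obstacle throughout is the bookkeeping with the crossed-product multiplication: tracking the cocycle units in $(1)\iff(2)$, confirming that the $B$ constructed in $(3)\implies(2)$ is \GGs{} (via condition (c)), and handling the non-$\ssa{g}{R}$ factors in the strengthening to two-sided ideals.
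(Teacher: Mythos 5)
Your proof is correct. The paper itself proves this proposition by a one-line reduction to the module version (Proposition~\ref{S-prime module for R*G}) via Lemma~\ref{S-prime ideal vs S-prime module} applied to $M=R/I$, but the computations you carry out directly for $IS$ --- expanding $a\basis{g}\,r\basis{k}\,b\basis{h_0}$ and using the cocycle identity to absorb the unit factors, then building the \GGs{} ideal $B=\sum_{h}R\ssa{h}{b}R$ for $(3)\implies(2)$ and taking $A=RaR$ in the commutative/surjective refinement --- are exactly the ones underlying that module proposition, so this is essentially the same argument, inlined for the ideal case rather than deduced from it.
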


\begin{proof}
This follows from Proposition~\ref{S-prime module for R*G} below, by
way of Lemma~\ref{S-prime ideal vs S-prime module}.
\end{proof}

The first corollary below gives a more appealing condition for an ideal to be
$S$-prime when \psurj{} or $R$ is commutative and \revsh{} holds.
Example~\ref{prime doesn't imply S-prime} shows that the corollary does
not hold for an arbitrary crossed product, even when $I$ is \GGi{} and
\revsh{} holds. Example~\ref{M prime but MS no ass primes example}
shows that the corollary fails without the assumption that $I$ is \GGi.
Example~\ref{reversibility necessary for S-prime comm} shows that the
corollary fails in the commutative case without \revsh{}.  This corollary is
unique among our results in that we do not have a module analog when
\psurj.

\begin{cor}\label{S-prime ideal R*G comm or surj}
Let $G$ be a \upm, $I$ an ideal of $R$, and $S=\skewm{R}{G}$. Suppose
$I$ is \GGi{} and suppose that either (a) $R$ is commutative and
\rev{} \emph{or} (b) \psurj.  Then $I$ is
$S$-prime if and only if $AB\sub I$ implies $A\sub I$ or $B\sub I$
whenever $A,B$ are ideals of $R$ with $B$ \GGs{}.
\end{cor}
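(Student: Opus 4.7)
My plan is to show that the condition in the Corollary is equivalent to condition~(3) of Proposition~\ref{S-prime ideal R*G}, which under either hypothesis (a) or (b) may be stated using ``ideal'' in place of ``left ideal''. The forward implication will be immediate: setting $g=e$ in~(3) produces exactly the Corollary's condition, since $A\ssa{e}{B}=AB$.

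For the converse I assume the Corollary's condition and verify~(3), so I fix ideals $A,B$ of $R$ with $B$ \GGs{} and $A\ssa{g}{B}\sub I$ for some $g\in G$, aiming to conclude $A\sub I$ or $B\sub I$. The central difficulty is the twist $\sigma(g)$: the Corollary's hypothesis applies only to plain products $A_0B_0$ of an ideal times a \GGs{} ideal, so the twist must be transferred to one side or the other, and the natural device depends on which of~(a),~(b) is in force.

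In case~(a), $R$ is commutative and \rev, so by Lemma~\ref{preservation of stability} the ideal $B':=R\ssa{g}{B}$ generated by $\ssa{g}{B}$ is \GGs. Since $AR=A$, the inclusion $A\ssa{g}{B}\sub I$ yields $AB'\sub I$, and the Corollary's hypothesis forces $A\sub I$ or $B'\sub I$; in the latter case $\ssa{g}{B}\sub B'\sub I$, and \GGi{}ness of $I$ in the form $\sigma(g)^{-1}(I)=I$ gives $B\sub I$.

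In case~(b) I instead pull back $A$: since $\sigma(g)$ is a ring homomorphism, $A':=\sigma(g)^{-1}(A)$ is an ideal of $R$. For any $a'\in A'$ and $b\in B$, the computation $\ssa{g}{a'b}=\ssa{g}{a'}\ssa{g}{b}\in A\ssa{g}{B}\sub I$ combined with \GGi{}ness of $I$ gives $A'B\sub I$, so the Corollary's hypothesis forces $A'\sub I$ or $B\sub I$. In the former case, pointwise surjectivity lets me write any $a\in A$ as $\ssa{g}{r}$ for some $r\in R$; then $r\in A'\sub I$, and $a=\ssa{g}{r}\in\ssa{g}{I}\sub I$ by the \GGs{}ness consequence of \GGi{}ness, so $A\sub I$. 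The only real obstacle is picking the right transformation in each case; once chosen, the reductions are short calculations.
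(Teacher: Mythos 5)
Your proof is correct. For case (b) it is essentially the paper's own argument: both of you pull $A$ back to $\sigma(g)^{-1}(A)$, use $\sigma(g)^{-1}(I)=I$ to convert $A\ssa{g}{B}\sub I$ into $\sigma(g)^{-1}(A)\,B\sub I$, apply the hypothesis, and in the case $\sigma(g)^{-1}(A)\sub I$ use surjectivity together with $\ssa{g}{I}\sub I$ to recover $A\sub I$. The genuine difference is case (a): the paper deduces the ideal statement from its module analogue, Corollary~\ref{S-prime module R*G comm or surj}, via Lemma~\ref{S-prime ideal vs S-prime module}, whereas you verify condition (3) of Proposition~\ref{S-prime ideal R*G} directly at the ideal level, invoking Lemma~\ref{preservation of stability} to see that the ideal $R\ssa{g}{B}$ generated by $\ssa{g}{B}$ is \GGs{} and then transferring the conclusion $R\ssa{g}{B}\sub I$ back to $B\sub I$ through $\sigma(g)^{-1}(I)=I$. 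The mechanism is the same one the paper uses one level up (its module corollary also rests on Lemma~\ref{preservation of stability}), but your version is self-contained and avoids the detour through induced modules; the paper's route has the advantage of getting the ideal and module statements from a single argument. Both directions check out, including the forward implication via $g=e$ in condition (3) and the implicit use of the fact that \GGi{} implies \GGs{} (so that Proposition~\ref{S-prime ideal R*G}, and its ``ideal'' refinement of (3) under commutativity or pointwise surjectivity, applies to $I$).
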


\begin{proof}
Under the assumption of \revsh, this result follows from
Corollary~\ref{S-prime module R*G comm or surj} below, by way of
Lemma~\ref{S-prime ideal vs S-prime module}.

Let us drop \revsh{} assumption and suppose that \psurj{} and that
whenever $A,B$ are ideals of $R$ with $B$
\GGs{} and $AB\sub I$, we have $A\sub I$ or $B\sub I$. Suppose that
$A$ is an ideal of $R$, $B$ is a \GGs{} ideal of $R$, and $A\ssa{g}{B}\sub
I$ for some $g\in G$.  Since $\sigma(g)$ is surjective, if we set
$\tilde{A}=\sigma(g)^{-1}(A)$, then $\ssa{g}{\tilde{A}}=A$. Certainly,
$\ssa{g}{(\tilde{A}B)}\sub I$; since $I$ is \GGi{}, this implies
$\tilde{A}B\sub I$.  By our hypothesis, this implies $\tilde{A}\sub I$ or
$B\sub I$.  If the former containment holds, then
$A=\ssa{g}{\tilde{A}}\sub \ssa{g}{I}\sub I$.  Thus by
Proposition~\ref{S-prime ideal R*G}, $I$ is $S$-prime.
\end{proof}

The next corollary give situations where \revsh{} hypothesis is not
needed.

\begin{cor}\label{S-prime ideal R*G acc}
Let $G$ be a \upm, $I$ a \GGi{} ideal of $R$, and $S=\skewm{R}{G}$.
Suppose $R$ \hasacc{} and \psurj. Then $I$ is $S$-prime if and only if
whenever $J,K$ are \GGi{} ideals of $R$ with $JK\sub I$, we have
$J\sub I$ or $K\sub I$.
\end{cor}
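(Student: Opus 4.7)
The plan is to deduce this corollary from Corollary~\ref{S-prime ideal R*G comm or surj}, which, under the pointwise surjectivity hypothesis, already characterizes $S$-primeness of the \GGi{} ideal $I$ by the condition that $AB \sub I$ forces $A \sub I$ or $B \sub I$ whenever $A,B$ are ideals of $R$ with $B$ merely \GGs. It therefore suffices to show that under the additional a.c.c.\ hypothesis, this \GGs{}-version of the condition is equivalent to the \GGi{}-version appearing in the statement.

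The forward direction is immediate, since any \GGi{} ideal is automatically \GGs: the equality $\sigma(g)^{-1}(K) = K$ implies $\ssa{g}{K}\sub K$ for every $g \in G$. For the reverse direction, suppose the \GGi{}-version holds, and consider ideals $A,B$ of $R$ with $B$ \GGs{} and $AB \sub I$. By Lemma~\ref{s-stable implies s-good with acc}, the a.c.c.\ and surjectivity promote $B$ to a \GGi{} ideal satisfying $\ssa{g}{B} = B$ for all $g \in G$. To replace $A$ by a \GGi{} inflation, I form the ideal quotient $\tilde A = \{\,r \in R \smid rB \sub I\,\}$, which is a two-sided ideal containing $A$ and satisfying $\tilde A B \sub I$ by construction. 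For $r \in \tilde A$ and $g \in G$, the computation $\ssa{g}{r}\cdot B = \ssa{g}{r}\cdot\ssa{g}{B} = \ssa{g}{(rB)} \sub \ssa{g}{I} \sub I$ (using $\ssa{g}{B} = B$, that $\sigma(g)$ preserves products, and that $I$ is \GGs) shows $\ssa{g}{r} \in \tilde A$, so $\tilde A$ is \GGs. A second application of Lemma~\ref{s-stable implies s-good with acc} then upgrades $\tilde A$ to a \GGi{} ideal. Applying the hypothesis to the \GGi{} pair $(\tilde A, B)$ yields $\tilde A \sub I$ or $B \sub I$; since $A \sub \tilde A$, the former implies $A \sub I$. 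Corollary~\ref{S-prime ideal R*G comm or surj} now delivers the conclusion.

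The main obstacle is finding a way to ``inflate'' the arbitrary ideal $A$ to a \GGi{} ideal without losing the product containment with $B$; the ideal quotient $(I:B)$ is the natural candidate, but its \GGs-ness crucially requires the stronger equality $\ssa{g}{B} = B$ rather than only $\ssa{g}{B} \sub B$. This stronger equality is precisely what Lemma~\ref{s-stable implies s-good with acc} supplies, and this is where both the a.c.c.\ and the surjectivity hypotheses get used essentially.
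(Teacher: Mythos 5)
Your argument is correct. It is worth noting, though, that it follows a different derivation path from the paper's: the paper disposes of this corollary in one line by citing Corollary~\ref{S-prime module R*G acc} applied to the module $R/I$ via Lemma~\ref{S-prime ideal vs S-prime module}, whereas you stay at the level of ideals and reduce to Corollary~\ref{S-prime ideal R*G comm or surj} (case (b), where \psurj{}). Both routes ultimately rest on Lemma~\ref{s-stable implies s-good with acc} to upgrade \GGs{} ideals to \GGi{} ones satisfying $\ssa{g}{B}=B$. The genuine content you supply --- and which the paper's citation-style proof leaves implicit --- is the inflation step: replacing the arbitrary ideal $A$ by the ideal quotient $\tilde A=\{\,r\in R\smid rB\sub I\,\}$ and verifying that $\tilde A$ is \GGs{} (hence \GGi{} by a second application of the a.c.c.\ lemma), which requires exactly the equality $\ssa{g}{B}=B$ rather than mere containment. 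An analogous bridging argument (from the elementwise condition~(3) of Corollary~\ref{S-prime module R*G acc} to the two-ideal condition here) is needed to make the paper's one-line proof complete, so your write-up can be read as an explicit, self-contained version of what the paper asserts in passing; the paper's route buys brevity and uniformity with the module-theoretic results, while yours buys transparency about where the a.c.c.\ and surjectivity are actually used.
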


\begin{proof}
This result follows from Corollary~\ref{S-prime module R*G acc} below,
by way of Lemma~\ref{S-prime ideal vs S-prime module}.
\end{proof}

\bigskip

We now turn to modules.

\begin{lem}\label{S-prime anns for R*G}
Let $G$ be a \upm{} and $S=\skewm{R}{G}$.  Let $M$ be an
$R$-module with annihilator $J$.
\begin{enumerate}
\item
$\ann_S \skewm{M}{G}=\skewm{\st{J}{G}}{G}$.
\item
If $M$ is $S$-prime, then $\st{J}{G}$ is right $S$-prime.
\qedsymbol
\end{enumerate}
\end{lem}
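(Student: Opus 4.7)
The plan is to prove part (1) by a direct computation in the explicit basis $\{\basis{g}\mid g\in G\}$ and then derive part (2) from part (1) together with the definition of $S$-primeness.

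For part (1), I would first verify that $\skewm{\st{J}{G}}{G}$ is a two-sided ideal of $S$: since $\st{J}{G}$ is by construction \GGs, Lemma~\ref{s-good conditions for R*G} gives $\st{J}{G}S = \skewm{\st{J}{G}}{G}$, so this makes sense. For the identity of ideals itself, I would compute $\ann_S \skewm{M}{G}$ elementwise. An arbitrary $s\in S$ has canonical form $s=\sum_h r_h\basis{h}$, and for each $m\in M$ and $g\in G$ the crossed product relations give
\[
m\basis{g}\cdot s = \sum_h m\,\ssa{g}{r_h}\,c(g,h)\,\basis{gh}.
\]
Since $G$ is cancellative, the elements $\basis{gh}$ (as $h$ varies with $g$ fixed) are distinct basis vectors of the free left $R$-module $\skewm{M}{G}$, so $m\basis{g}\cdot s=0$ if and only if $m\,\ssa{g}{r_h}\,c(g,h)=0$ for every $h$. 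Because each $c(g,h)$ is a unit of $R$, this is equivalent to $m\,\ssa{g}{r_h}=0$. Requiring this for every $m\in M$ and every $g\in G$ translates to $\ssa{g}{r_h}\in J$ for all $g,h$, i.e.\ $r_h\in\st{J}{G}$ for every $h$. Thus $s\in \skewm{\st{J}{G}}{G}$, which proves the claimed equality.

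For part (2), I would simply unpack definitions: by hypothesis $\skewm{M}{G}$ is a prime $S$-module, and the annihilator of any prime module is a prime ideal, so $\ann_S\skewm{M}{G}$ is a prime ideal of $S$. By part (1), this annihilator equals $\st{J}{G}S$, so $\st{J}{G}S$ is prime in $S$, which is exactly the statement that $\st{J}{G}$ is right $S$-prime.

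The main thing to be careful about is the bookkeeping with the crossed product structure in part (1): it is easy to slip up on whether the twisting factor $c(g,h)$ or the homomorphism $\sigma(g)$ is what is being applied, but once one writes out $m\basis{g}\cdot r_h\basis{h}$ correctly and remembers that $c(g,h)\in GL_1(R)$ is a unit, the equivalence is immediate. Part (2) is then essentially a formal consequence with no additional obstacles.
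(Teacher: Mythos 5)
Your proposal is correct and follows essentially the same route as the paper: a degree-by-degree computation of $m\basis{g}\cdot r\basis{h}$ using that $G$ is cancellative and that each $c(g,h)$ is a unit, with part (2) then an immediate formal consequence of part (1) and the fact that the annihilator of a prime module is prime. The only cosmetic difference is that you reduce to homogeneous annihilating elements by hand via cancellativity, where the paper implicitly relies on the homogeneity of annihilators of graded modules (Corollary~\ref{homog annihilator}), which is proved the same way.
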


\begin{proof}
(1) Let $I=\st{J}{G}$.  Then $(M\basis{g})I\sub M\ssa{g}{I}\basis{g}
\sub MJ\basis{g}=0$, so $(\skewm{M}{G})\st{J}{G}=0$. This
shows $\st{J}{G}\sub\ann \skewm{M}{G}$.

For the reverse inclusion, suppose $\st{\ann M}{G}r\basis{h}=0$ for
some $r\in R,h\in G$. Then
$M\ssa{g}{r}c(g,h)\basis{gh}=(M\basis{g})r\basis{h}=0$, whence
$M\ssa{g}{r}=0$ and $\ssa{g}{r}\in J$ for all $g\in G$.  This shows
$\ann \skewm{M}{G}\sub \st{J}{G}$.

(2) This follows immediately from (1).
\end{proof}

\smallskip

We next give conditions for $S$-primeness of a module when
$S=\skewm{R}{G}$.  We begin with two results in the general case and
then proceed to results with extra hypotheses.

\begin{prop}\label{S-prime module for R*G}
Let $G$ be a \upm{} and $S=\skewm{R}{G}$.  Let $M$ be a nonzero
$R$-module. The following conditions are equivalent.
\begin{enumerate}
\item
$M$ is $S$-prime.
\item
If $m\in M$, $b\in R$, $g\in G$, and $m\ssa{g}{R}\ssaa{g}{h}{b}=0$
for all $h\in G$, then either $m=0$ or $M\ssa{h}{b}=0$ for all $h\in
G$.
\item
If $m\in M$, $B$ is a \GGs{} ideal of $R$, and $m\ssa{g}{B}=0$ for
some $g\in G$, then $m=0$ or $MB=0$.
\end{enumerate}
If $R$ is commutative or \psurj, we can add the following equivalent
condition:
\begin{enumerate}
\item[(4)]
If $N$ is a submodule of $M$,  $B$ is a \GGs{} ideal of $R$, and
$N\ssa{g}{B}=0$ for some $g\in G$, then $N=0$ or $MB=0$.
\end{enumerate}
\end{prop}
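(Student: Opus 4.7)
I will prove the cycle $(1) \Rightarrow (3) \Rightarrow (2) \Rightarrow (1)$ to establish the main three-way equivalence, and then under the extra hypothesis handle $(3) \Leftrightarrow (4)$ (the direction $(3) \Rightarrow (4)$ being automatic). For $(1) \Rightarrow (3)$, I will first note that when $B$ is a \GGs{} ideal of $R$, the extension $BS = \oplus_{h \in G}B\basis{h}$ is a two-sided ideal of $S$ (since $SB \subseteq BS$ follows from $\ssa{h}{B} \subseteq B$). Given $m\ssa{g}{B} = 0$, the direct calculation $m\basis{g}\cdot b\basis{h} = m\ssa{g}{b}c(g,h)\basis{gh} = 0$ shows $m\basis{g}\cdot BS = 0$, so primeness of $\skewm{M}{G}$ forces $m = 0$ or $(\skewm{M}{G})\cdot BS = 0$; the latter contains $M\basis{e}\cdot B\basis{e} = MB\basis{e}$, yielding $MB = 0$. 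For $(3) \Rightarrow (2)$, given the hypothesis of (2), I will form the ideal $B = \sum_{h \in G} R\ssa{h}{b}R$ of $R$, verify it is \GGs{} using the identity $\ssaa{g'}{h}{b} = c(g',h)\ssa{g'h}{b}c(g',h)^{-1}$, and expand $\ssa{g}{B} = \sum_h\ssa{g}{R}\ssaa{g}{h}{b}\ssa{g}{R}$ to conclude from the hypothesis that $m\ssa{g}{B} = 0$. Applying (3) then gives $m = 0$ or $MB = 0$, and the latter implies $M\ssa{h}{b} = 0$ for every $h$ since each $\ssa{h}{b}$ lies in $B$.

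For $(2) \Rightarrow (1)$, I will apply Lemma~\ref{graded prime}(2) to reduce primeness of $\skewm{M}{G}$ to a condition on a homogeneous element $m\basis{g_0}$ of $\skewm{M}{G}$ and a homogeneous element $b_0\basis{h_0}$ of $S$. The triple product $m\basis{g_0}\cdot r\basis{i}\cdot b_0\basis{h_0}$ produces two cocycle factors $c(g_0, i)$ and $c(g_0 i, h_0)$ that must be absorbed, using that $c(-,-)$ takes values in units together with the conjugation formula $\sigma(g)\sigma(h) = \tau_{g,h}\cp\sigma(gh)$, in order to rewrite the vanishing hypothesis as $m\ssa{g_0}{R}\ssaa{g_0}{i}{b_0} = 0$ for all $i \in G$. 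This matches (2) with $g = g_0$, $h = i$, $b = b_0$, and the resulting conclusion $M\ssa{i}{b_0} = 0$ for all $i$ is equivalent to $(\skewm{M}{G})\cdot b_0\basis{h_0} = 0$ by an analogous computation. The passage from homogeneous to arbitrary $y \in S$ in Lemma~\ref{graded prime}(2) is immediate via homogeneous decomposition, because cancellativity of $G$ places the summands $m\basis{g_0}\cdot S_i\cdot y_\alpha$ in distinct degrees.

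For the added condition (4), the direction $(3) \Rightarrow (4)$ needs no extra hypothesis: if $N\ssa{g}{B} = 0$ with $N \ne 0$, then any nonzero $n \in N$ satisfies $n\ssa{g}{B} = 0$, and (3) forces $MB = 0$. For $(4) \Rightarrow (3)$, given $m\ssa{g}{B} = 0$, I will apply (4) with $N = mR$, which requires verifying $mR\ssa{g}{B} = 0$; in the commutative case this follows from $mr\ssa{g}{b} = m\ssa{g}{b}r = 0$, while in the pointwise surjective case it follows from $R\ssa{g}{B} = \ssa{g}{R}\ssa{g}{B} = \ssa{g}{RB} \subseteq \ssa{g}{B}$. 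Then (4) delivers $mR = 0$ (equivalently $m = 0$) or $MB = 0$.

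The main obstacle is the cocycle bookkeeping in $(2) \Rightarrow (1)$: one must carefully exploit the unit-valued $c(-,-)$ and the conjugation identity to strip the cocycles appearing in the triple product, so that the vanishing condition takes precisely the form prescribed by condition (2). The remaining implications involve more formal manipulations of \GGs{} ideals and their behavior under the induced extension to $S$.
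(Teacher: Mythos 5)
Your proposal is correct and follows essentially the same route as the paper: the cycle $(1)\Rightarrow(3)\Rightarrow(2)\Rightarrow(1)$ using the ideal $BS$ for the first step, the \GGs{} ideal $B=\sum_{h}R\ssa{h}{b}R$ for the second, Lemma~\ref{graded prime} with the cocycle-stripping computation for the third, and the $N=mR$ reduction (split by commutativity versus pointwise surjectivity) for $(4)\Rightarrow(3)$. All the details you flag, including the unit-valued cocycle bookkeeping, match the paper's argument.
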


\begin{proof}
$(1)\implies (3)$  Suppose $M$ is $S$-prime and $m\ssa{g}{B}=0$ for
some nonzero $m\in M$, some $g\in G$, and some \GGs{} ideal $B$.
Then $BS$ is an ideal of $S$ and
$(m\basis{g}S)BS=m\basis{g}BS=m\ssa{g}{B}\basis{g}S=0$. Since
$\skewm{M}{G}$ is prime, $\ann m\basis{g}S=\ann\skewm{M}{G}$.
Thus $MB=0$.

$(3)\implies (2)$  Suppose $m\in M$, $b\in R$, $g\in G$, and
$m\ssa{g}{R}\ssaa{g}{h}{b}=0$ for all $h\in G$.  Set $B=\sum_{h\in
G}R\ssa{h}{b}R$.  Then $B$ is a \GGs{} ideal of $R$ and
$m\ssa{g}{B}=0$. Thus either $m=0$ or $MB=0$, and the latter equality
implies $M\ssa{h}{b}=0$ for all $h\in G$.

$(2)\implies (1)$  Suppose (2) holds and let $m\basis{g}\in
\skewm{M}{G}$ be a (nonzero) homogeneous element.
By Lemma~\ref{graded prime}, we need to show that if
$m\basis{g}Sb\basis{\imath}=0$ for $b\in R, i\in G$, then
$\skewm{M}{G} b\basis{\imath}=0$. Suppose
$m\basis{g}Sb\basis{\imath}=0$; this implies that for any $r\in R,h\in
G$ we have
\[
0=m\basis{g}r\basis{h}b\basis{\imath}
=m\ssa{g}{r}\basis{g}\ssa{h}{b}\basis{h}\basis{\imath}=
m\ssa{g}{r}\ssaa{g}{h}{b}\ssa{g}{c(h,i)}c(g,hi)\basis{gh\imath}.
\]
Thus $m\ssa{g}{R}\ssaa{g}{h}{b}=0$ for all $h\in G$ since $c$ always
yields units.  Thus by (2), we can conclude that $M\ssa{h}{b}=0$ for all
$h\in G$. This implies that $(\skewm{M}{G})b=0$, and this proves
$\skewm{M}{G}$ is prime.

$(3)\implies (4)$ This is clearly true without any extra hypotheses.

\smallskip

Assume $R$ is commutative or \psurj, and suppose $(4)$ holds.  We prove
$(3)$.  Suppose $m\in M$, $B$ is a \GGs{} ideal of $R$, and
$m\ssa{g}{B}=0$ for some $g\in G$.  If $R$ is commutative,
$mR\ssa{g}{B}=m\ssa{g}{B}R=0$, while if \psurj, $\ssa{g}{B}\normal
I$, so $mR\ssa{g}{B}=m\ssa{g}{B}=0$.  Thus if we set $N=mR$, we have
$N\ssa{g}{B}=0$, and so we may apply condition (4) to obtain $m=0$ or
$MB=0$.
\end{proof}

\begin{cor}\label{S-prime module for R*G second}
Let $G$ be a \upm{} and $S=\skewm{R}{G}$.  Let $M$ be a nonzero
$R$-module. Then $(1)\implies (2)\iff (3)\iff (4)$ for the following
conditions.
\begin{enumerate}
\item
$M$ is $S$-prime.
\item
If $m\in M$, $b\in R$, and $mR\ssa{h}{b}=0$ for all $h\in G$, then
either $m=0$ or $Mb=0$.
\item
If $m\in M$ and $B$ is a \GGs{} ideal of $R$ with $mB=0$, then
$m=0$ or $MB=0$.
\item
$\st{\ann N}{G}=\st{\ann M}{G}$ for every nonzero submodule $N$
of $M$.
\end{enumerate}
\end{cor}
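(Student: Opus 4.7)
The plan is to establish the four implications $(1)\Rightarrow(2)$, $(2)\Rightarrow(3)$, $(3)\Rightarrow(2)$, and $(3)\Leftrightarrow(4)$ separately, leveraging Proposition~\ref{S-prime module for R*G} for the first and working with \GGs{} ideals by hand for the rest. For $(1)\Rightarrow(2)$, I would specialize condition (2) of Proposition~\ref{S-prime module for R*G} to $g=e$. Then the elementwise hypothesis $m\ssa{g}{R}\ssaa{g}{h}{b}=0$ becomes $mR\ssa{h}{b}=0$ for all $h\in G$, and the alternative $M\ssa{h}{b}=0$ for all $h$ specializes at $h=e$ to $Mb=0$, which is exactly the conclusion required by (2).

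For $(2)\Rightarrow(3)$, suppose $mB=0$ with $B$ \GGs. For each fixed $b\in B$, the \GGs-ness of $B$ gives $\ssa{h}{b}\in B$ for every $h$, so $mR\ssa{h}{b}\subseteq mB=0$. Hypothesis (2) then forces $m=0$ or $Mb=0$; if $m\ne 0$, this holds for every $b\in B$, so $MB=0$. The converse $(3)\Rightarrow(2)$ is where the main technical obstacle lies: given $mR\ssa{h}{b}=0$ for all $h$, I would set $B=\sum_{h\in G} R\ssa{h}{b}R$ and need to verify that $B$ is \GGs. This requires the crossed-product cocycle identity $\ssaa{g}{h}{r}c(g,h)=c(g,h)\ssa{gh}{r}$, from which $\ssaa{g}{h}{b}=c(g,h)\ssa{gh}{b}c(g,h)^{-1}$. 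Since $c(g,h)$ is a unit of $R$, it can be absorbed into the surrounding $R$-bimodule structure, giving $\ssa{g}{R}\ssaa{g}{h}{b}\ssa{g}{R}\subseteq R\ssa{gh}{b}R\subseteq B$; summing over $h$ yields $\ssa{g}{B}\subseteq B$. Then $mB=0$, and condition (3) gives $m=0$ or $MB=0$, the latter yielding $Mb=0$ by taking $h=e$.

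For $(3)\Rightarrow(4)$, observe that $\ann M\subseteq\ann N$ is automatic and preserved under $\st{\cdot}{G}$, giving $\st{\ann M}{G}\subseteq\st{\ann N}{G}$. For the reverse, set $I=\st{\ann N}{G}$, which is \GGs{} by construction. Pick any nonzero $n\in N$; then $nI=0$, and (3) applied to $n$ forces $MI=0$ (since $n\ne 0$), so $I\subseteq\ann M$, and thus $I\subseteq\st{\ann M}{G}$ since $I$ is \GGs. For $(4)\Rightarrow(3)$, suppose $mB=0$ with $B$ \GGs, and assume $m\ne 0$. Set $N=mR$, so $N\ne 0$ and $B\subseteq\ann N$; since $B$ is \GGs, $B\subseteq\st{\ann N}{G}=\st{\ann M}{G}\subseteq\ann M$ by (4), so $MB=0$. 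This completes the cycle; the only non-routine step is verifying \GGs-ness of the ideal $B$ built in $(3)\Rightarrow(2)$, which is precisely where the crossed-product structure enters.
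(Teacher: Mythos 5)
Your proposal is correct and follows essentially the same route as the paper: the paper proves the cycle $(1)\Rightarrow(3)\Rightarrow(4)\Rightarrow(2)\Rightarrow(3)$ while you arrange the implications as $(1)\Rightarrow(2)$, $(2)\Leftrightarrow(3)$, $(3)\Leftrightarrow(4)$, but the ingredients are identical --- specializing Proposition~\ref{S-prime module for R*G} at $g=e$, forming the ideal $B=\sum_{h\in G}R\ssa{h}{b}R$ and checking it is \GGs{} via the cocycle identity, and comparing $\st{\ann N}{G}$ with $\st{\ann M}{G}$ through cyclic submodules. Your explicit verification that $B$ is \GGs{} (absorbing the units $c(g,h)$) is a detail the paper leaves implicit, but it is not a different method.
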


\begin{proof}
$(1)\implies (3)$ This follows from $(1)\implies (3)$ in
Proposition~\ref{S-prime module for R*G} by setting $g=e$.

$(3)\implies (4)$ Let $N$ be a nonzero submodule of $M$, let $n\in N$ be
nonzero, and set $B=\st{\ann N}{G}$.  Then $nB=0$ and $B$ is \GGs,
and so by (2), $MB=0$. Thus $\st{\ann N}{G}\sub \st{\ann M}{G}\sub
\st{\ann N}{G}$; this proves (3).

$(4)\implies (2)$ Let $m\in M\smin\{0\}$ and $b\in R$ satisfy
$mR\ssa{h}{b}=0$ for all $h\in G$.  Set $B=\sum_{h\in G}
R\ssa{h}{b}R$, so that $B$ is a \GGs{} ideal of $R$.  Clearly $mB=0$.
Thus $B\sub \st{\ann mR}{G}=\st{\ann M}{G}$, whence $MB=0$ and
so $Mb=0$.

$(2)\implies (3)$ Let $m\in M$ and $B$ be a \GGs{} ideal of $R$ with
$mB=0$.  If $b\in B$, then $mRb^h=0$ for all $h\in G$.  By (2),
$Mb=0$.  This holds for any $b\in B$, whence $MB=0$.
\end{proof}

The next corollary states that the four conditions in
Corollary~\ref{S-prime module for R*G second} are equivalent when
several extra conditions hold.  Example~\ref{prime doesn't imply
S-prime} shows that the corollary does not hold for an arbitrary crossed
product, even when $\st{\ann M}{G}$ is
\GGi{} and \revsh{} holds. Example~\ref{M prime but MS no ass primes
example} shows that the hypothesis ``$\st{\ann M}{G}$ is \GGi'' is
necessary in parts~(2) and~(3) of the corollary.
Example~\ref{reversibility necessary for S-prime comm} shows that the
corollary fails without \revsh{}, at least in the commutative case.

\begin{cor}\label{S-prime module R*G comm or surj}
Let $G$ be a \upm{} and $S=\skewm{R}{G}$.  Let $M$ be a nonzero
$R$-module.  Assume that $\st{\ann M}{G}$ is \GGi{} and
\rev{}.  If either \psurj{} or $R$ is commutative, then the four
conditions in Corollary~\ref{S-prime module for R*G second} are
equivalent.
\end{cor}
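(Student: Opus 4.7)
The plan is to invoke Corollary~\ref{S-prime module for R*G second}, which already supplies the chain $(1)\implies(2)\iff(3)\iff(4)$; it therefore suffices to close the circle by deducing $(1)$ from $(3)$. Concretely, I will verify condition~$(3)$ of Proposition~\ref{S-prime module for R*G}: given $m\in M$, a \GGs{} ideal $B$ of $R$, and $g\in G$ with $m\ssa{g}{B}=0$, I must show that $m=0$ or $MB=0$.

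To this end, form the ideal $B'=R\ssa{g}{B}R$ of $R$ generated by $\ssa{g}{B}$. By Lemma~\ref{preservation of stability}, which uses the reversibility hypothesis, $B'$ is \GGs. Moreover $mB'=0$: when \psurj, $\ssa{g}{B}$ is itself an ideal and $B'=\ssa{g}{B}$; when $R$ is commutative, $mB'=mR\ssa{g}{B}=m\ssa{g}{B}R=0$. Applying condition~$(3)$ of the corollary to $m$ and $B'$ yields $m=0$ or $MB'=0$, and the former case is what we want.

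So suppose $MB'=0$. Then the \GGs{} ideal $B'$ sits inside $\ann M$ and hence inside $\st{\ann M}{G}$, the largest \GGs{} ideal contained in $\ann M$. Using the \GGi{} hypothesis on $\st{\ann M}{G}$, I then deduce
\[
B\sub \sigma(g)^{-1}\bigl(\ssa{g}{B}\bigr)\sub \sigma(g)^{-1}(B')\sub \sigma(g)^{-1}\bigl(\st{\ann M}{G}\bigr)=\st{\ann M}{G}\sub \ann M,
\]
so $MB=0$, as required.

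The main obstacle is keeping the roles of the three auxiliary hypotheses straight: reversibility is what makes the ``inflated'' ideal $B'=R\ssa{g}{B}R$ \GGs{} (so that condition~$(3)$ of the corollary is applicable), the commutativity-or-surjectivity hypothesis is what guarantees that the inflation still annihilates $m$ (so that no information is lost in passing from $m\ssa{g}{B}=0$ to $mB'=0$), and \GGi-ness of $\st{\ann M}{G}$ is what finally allows the containment $\ssa{g}{B}\sub\ann M$ to be pulled back through $\sigma(g)^{-1}$ to yield $B\sub \ann M$. Each of the examples alluded to just before the statement is designed to show that dropping any one of these hypotheses breaks the corresponding step of this argument.
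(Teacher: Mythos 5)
Your proof is correct and follows essentially the same route as the paper's: reduce to $(3)\implies(1)$ via Proposition~\ref{S-prime module for R*G}, inflate $\ssa{g}{B}$ to $B'=R\ssa{g}{B}R$, use Lemma~\ref{preservation of stability} for \GGs-ness, the commutativity-or-surjectivity hypothesis for $mB'=0$, and the \GGi-ness of $\st{\ann M}{G}$ to pull the containment back to $B$. The only difference is expository: you spell out the role of each hypothesis more explicitly than the paper does.
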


\begin{proof}
By Corollary~\ref{S-prime module for R*G second}, it is enough to prove
$(3)\implies (1)$.  To do so, suppose $m\in M$ is nonzero, $B$ is a
\GGs{} ideal, and $m\ssa{g}{B}=0$. Set $B'=R\ssa{g}{B}R$.  By
Lemma~\ref{preservation of stability}, $B'$ is \GGs.  If  $\sigma(g)$ is
surjective, then $B'=\ssa{g}{B}$ and if $R$ is commutative,
$B'=\ssa{g}{B}R$, whence $mB'=0$ in either case.  Thus by (3),
$MB'=0$, and so $B'\sub\ann M$. As $B'$ is \GGs, this implies
$\ssa{g}{B}\sub B'\sub\st{\ann M}{G}$. Since $\st{\ann M}{G}$ is
\GGi, this in turn implies $B\sub\st{\ann M}{G}$, whence $MB=0$.
This proves $M$ is $S$-prime by Proposition~\ref{S-prime module for
R*G}.
\end{proof}

We get the same result when $R$ \hasacc{} and \psurj{}; reversibility is
not required, nor must we assume anything about $\st{\ann M}{G}$.

\begin{cor}\label{S-prime module R*G acc}
Let $G$ be a \upm{} and $S=\skewm{R}{G}$.  Let $M$ be a nonzero
$R$-module.  Suppose that $R$ \hasacc{} and \psurj.  Then the following
conditions are equivalent.
\begin{enumerate}
\item
$M$ is $S$-prime.
\item
If $m\in M$, $b\in R$, and $mR\ssa{h}{b}=0$ for all $h\in G$, then
either $m=0$ or $Mb=0$.
\item
If $m\in M$ and $B$ is a \GGi{} ideal of $R$ with $mB=0$, then
$m=0$ or $MB=0$.
\item
$\st{\ann N}{G}=\st{\ann M}{G}$ for every nonzero submodule $N$
of $M$.
\end{enumerate}
\end{cor}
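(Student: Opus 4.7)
The plan is to leverage Corollary~\ref{S-prime module for R*G second} together with Lemma~\ref{s-stable implies s-good with acc} in order to identify the relevant notions of stability/invariance and then verify the one new implication.

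First I would observe that under the hypotheses (a.c.c.\ on ideals and \psurj), Lemma~\ref{s-stable implies s-good with acc} tells us that every \GGs{} ideal $B$ is in fact \GGi{} and satisfies $\ssa{g}{B}=B$ for all $g\in G$. In particular, ``\GGs'' and ``\GGi'' coincide in this setting. Consequently, condition~(3) of the present corollary (formulated with \GGi{} ideals) is literally the same as condition~(3) of Corollary~\ref{S-prime module for R*G second} (formulated with \GGs{} ideals), and of course conditions~(2) and~(4) are identical to the corresponding conditions there. Applying Corollary~\ref{S-prime module for R*G second}, we obtain the chain of implications $(1)\Rightarrow(2)\Leftrightarrow(3)\Leftrightarrow(4)$ for free.

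It remains to prove $(3)\Rightarrow (1)$, which is the only genuinely new content. The natural approach is to verify condition~(3) of Proposition~\ref{S-prime module for R*G}: suppose $m\in M$, $B$ is a \GGs{} ideal of $R$, and $m\ssa{g}{B}=0$ for some $g\in G$. By Lemma~\ref{s-stable implies s-good with acc}, $\ssa{g}{B}=B$, so $mB=0$. Since (by the observation above) $B$ is \GGi, condition~(3) of our corollary then forces $m=0$ or $MB=0$. Proposition~\ref{S-prime module for R*G} now yields that $M$ is $S$-prime, completing the cycle.

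There is no real obstacle here; the work is entirely in packaging the two earlier results correctly. The only point to watch is the distinction between \GGs{} and \GGi{} ideals --- in general these differ, and it is precisely Lemma~\ref{s-stable implies s-good with acc} that collapses the distinction under the a.c.c.\ plus pointwise surjectivity hypothesis, which is why neither the reversibility assumption nor any hypothesis on $\st{\ann M}{G}$ appears (contrast with Corollary~\ref{S-prime module R*G comm or surj}).
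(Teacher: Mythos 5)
Your proposal is correct and follows essentially the same route as the paper: it invokes Corollary~\ref{S-prime module for R*G second} together with Lemma~\ref{s-stable implies s-good with acc} (to identify \GGs{} and \GGi{} ideals) for $(1)\Rightarrow(2)\Leftrightarrow(3)\Leftrightarrow(4)$, and proves $(3)\Rightarrow(1)$ by reducing $m\ssa{g}{B}=0$ to $mB=0$ via $\ssa{g}{B}=B$ and then applying Proposition~\ref{S-prime module for R*G}. No gaps.
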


\begin{proof}
That $(1)\implies (2)\iff (3)\iff (4)$ is the content of
Corollary~\ref{S-prime module for R*G second} (since \GGs{} and
\GGi{} are equivalent by Lemma~\ref{s-stable implies s-good with acc}).

$(3)\implies (1)$ Suppose $m\in M$ is nonzero, $B$ is a \GGs{} ideal, and
$m\ssa{g}{B}=0$.  By Lemma~\ref{s-stable implies s-good with acc}, we
know $\ssa{g}{B}=B$, and so by (3), we conclude $MB=0$.  Thus $M$ is
$S$-prime by Proposition~\ref{S-prime module for R*G}.
\end{proof}

\smallskip

It is of some interest to know whether prime implies $S$-prime.  The next
corollary gives us a result in this direction.  It will be used in several
examples in Section~\ref{Examples}.

\begin{cor}\label{S-prime module from prime module R*G}
Let $G$ be a \upm{} and $S=\skewm{R}{G}$.  Let $M$ be a prime
$R$-module with annihilator $I$.  Suppose that $R$ is commutative or
\psurj. Then the following conditions are equivalent.
\begin{enumerate}
\item
$M$ is $S$-prime.
\item
If $b\in R, g\in G$, and $\ssaa{g}{h}{b}\in I$ for all $h\in G$, then
$b\in I$.
\item
If $B$ is a \GGs{} ideal of $R$ and $B^g\sub I$ for some $g\in G$, then
$B\sub I$.
\end{enumerate}
\end{cor}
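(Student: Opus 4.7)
The plan is to prove the cycle $(1) \implies (2) \implies (3) \implies (1)$, using Proposition~\ref{S-prime module for R*G} as the bridge between $S$-primeness and the elementwise/idealwise conditions on $R$. Primeness of $M$ as an $R$-module enters only through the identification $\ann(mR) = \ann M = I$ for every nonzero $m \in M$, which is what collapses ``$m$-level'' conditions to ``$M$-level'' ones.

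For $(1) \implies (2)$, fix $b \in R$ and $g \in G$ with $\ssaa{g}{h}{b} \in I$ for all $h \in G$, and pick any nonzero $m \in M$. Since $I = \ann M$, one has $m\ssa{g}{R}\ssaa{g}{h}{b} \subseteq M\ssaa{g}{h}{b} = 0$ for every $h$, so Proposition~\ref{S-prime module for R*G}(2) forces $M\ssa{h}{b} = 0$ for every $h$; specializing to $h = e$ yields $b \in I$. For $(2) \implies (3)$, let $B$ be a \GGs{} ideal with $B^g \subseteq I$ and let $b \in B$. Since $B$ is \GGs{}, $\ssa{h}{b} \in B$ for every $h$, hence $\ssaa{g}{h}{b} = \ssa{g}{\ssa{h}{b}} \in \ssa{g}{B} = B^g \subseteq I$; condition (2) now yields $b \in I$, so $B \subseteq I$.

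For $(3) \implies (1)$, the plan is to verify condition (3) of Proposition~\ref{S-prime module for R*G}: assume $m \in M$ is nonzero and $m\ssa{g}{B} = 0$ for some \GGs{} ideal $B$ and some $g \in G$, and derive $MB = 0$. The only step that uses the dichotomy hypothesis (\textbf{$R$ commutative} or \textbf{\psurj{}}) is the passage from the pointwise equation $m\ssa{g}{B} = 0$ to the containment $\ssa{g}{B} \subseteq \ann(mR)$. If $R$ is commutative, then $mR\ssa{g}{B} = m\ssa{g}{B}R = 0$. If instead \psurj{}, then $\ssa{g}{B}$ is itself a two-sided ideal of $R$, so $mR\ssa{g}{B} \subseteq m\ssa{g}{B} = 0$. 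Since $M$ is prime and $mR \ne 0$, $\ann(mR) = \ann M = I$, hence $B^g = \ssa{g}{B} \subseteq I$. Condition (3) then gives $B \subseteq I$, i.e.\ $MB = 0$, which is precisely what Proposition~\ref{S-prime module for R*G}(3) requires.

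I do not anticipate a genuine obstacle here: the only subtlety is bookkeeping around the fact that $\ssa{g}{B}$ is in general only a subset of $R$ (not an ideal), and the two alternative hypotheses are exactly tailored to bypass this so that the $R$-primeness of $M$ can be invoked on the ordinary annihilator $\ann(mR)$.
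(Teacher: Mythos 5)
Your proposal is correct and follows essentially the same route as the paper: the same cycle $(1)\Rightarrow(2)\Rightarrow(3)\Rightarrow(1)$, with Proposition~\ref{S-prime module for R*G} as the bridge and primeness of $M$ used to identify $\ann(mR)$ with $I$. The only cosmetic difference is in $(3)\Rightarrow(1)$: the paper verifies condition (4) of that proposition (which is already stated to be equivalent under the commutative-or-surjective hypothesis), whereas you verify condition (3) directly and inline the same commutativity/surjectivity trick that the paper uses to prove $(4)\Rightarrow(3)$ there.
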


\begin{proof}
$(1)\implies (2)$ Suppose $\ssaa{g}{h}{b}\in I$ for all $h\in G$.  Then
$\ssa{g}{R}\ssaa{g}{h}{b}\sub I$ for all $h$, whence
$m\ssa{g}{R}\ssaa{g}{h}{b}=0$ for all $h$, for any $m\in M$.  We may
choose $m\ne 0$ and apply Proposition~\ref{S-prime module for R*G} to
obtain $Mb^h=0$ for all $h\in G$.  In particular, $b\in \ann M=I$.

$(2)\implies (3)$  Suppose $B$ is a \GGs{} ideal of $R$ and $B^g\sub I$
for some $g\in G$.  For any $b\in B$ and $h\in G$, we have
$\ssaa{g}{h}{b}\in B^g\sub I$.  By (2), this implies $b\in I$, so $B\sub
I$.

$(3)\implies (1)$  We will prove condition (4) of Proposition~\ref{S-prime
module for R*G} holds, whence $M$ is $S$-prime.  Thus we suppose $N$
is a nonzero submodule of $M$, $B$ is a \GGs{} ideal of $R$, $g\in G$,
and $NB^g=0$.  Since $M$ is prime, this implies $B^g\sub\ann N=\ann
M=I$.  Thus by (3), $B\sub I$, which in turn implies $MB=0$.
\end{proof}

\medskip

We might hope to prove the analog of Proposition~\ref{ass MS sg} for
$S=\skewm{R}{G}$, namely that the associated prime ideals of
$\skewm{M}{G}$ are of the form $\skewm{\st{J}{G}}{G}$ where $J$ is
the annihilator of an $S$-prime submodule of $M$. Unfortunately, this
statement is not true in general; see Example~\ref{prime doesn't imply
S-prime} or Example~\ref{affine example}.  The result becomes true
when we replace ``submodule of $M$'' with ``submodule of some twist
$M_{\sigma(g)}$''. The result is true as stated if \psurj{} and either
\revsh{} holds or $R$ \hasacc.

Recall that if $M$ is an $R$-module and $\phi:R\to R$ is a ring
endomorphism, we can define a new $R$-module $M_{\phi}$ as follows:
$M_{\phi}$ is the same additive group as $M$, but the $R$-action is
twisted according to the rule $m*r=m\phi(r)$. It is easy to see that there
is a bijection between $\phi(R)$-submodules of $M$ (regarded as
$\phi(R)$-modules) and $R$-submodules of $M_{\phi}$, given by
$L\mapsto L_{\phi}$.  Clearly we have $\ann L_{\phi}=\phi^{-1}(\ann
L)$.  When $\phi$ is not surjective, there may be $\phi(R)$-submodules
$L$ of $M$ that are not $R$-submodules, that is, that are not closed
under the action of $R$.  (See Example~\ref{prime doesn't imply
S-prime}, for instance.)

If $L$ is an $\ssa{g}{R}$-submodule of $M$, then $L\basis{g}$ is an
$R$-submodule of $\skewm{M}{G}$. The map $a\mapsto a\basis{g}$ is
an $R$-module isomorphism from $L_{\sigma(g)}$ to $L\basis{g}$.

\medskip

We are now ready to describe the associated prime ideals of
$\skewm{M}{G}$.

\begin{prop}\label{prime submods of homog mods for R*G}
Let $G$ be a \upm{} and $S=\skewm{R}{G}$.  Let $M$ be an
$R$-module and $N$ be a prime submodule of $\skewm{M}{G}$.  Then
there is an $i\in G$ and an $\ssa{i}{R}$-submodule $L$ of $M$ such that
$L_{\sigma(i)}$ is $S$-prime.  Moreover, if we set $J=\ann
L_{\sigma(i)}=\sigma(i)^{-1}(\ann_R L)$ and $I=\st{J}{G}$, then $I$ is
$S$-prime and $\ann N=IS$.
\end{prop}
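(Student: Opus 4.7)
The plan is to apply Corollary~\ref{submods of homog mods} to locate a cyclic homogeneous submodule of $N$, identify that submodule as an induced module $\skewm{L_{\sigma(i)}}{G}$ for an appropriate twist, and then read off the associated prime from Lemma~\ref{S-prime anns for R*G}.

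Since $N$ is a nonzero submodule of the graded $S$-module $\skewm{M}{G}$ and $G$ is a \upm, Corollary~\ref{submods of homog mods} provides a nonzero homogeneous element $b \in \skewm{M}{G}$ such that $bS$ is isomorphic to a submodule of $N$. Write $b = a\basis{i}$ with $a \in M$ and $i \in G$, and set $L = a\ssa{i}{R}$, the $\ssa{i}{R}$-submodule of $M$ generated by $a$.

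Next I claim that the $S$-module map $\phi\colon \skewm{L_{\sigma(i)}}{G} \to bS$ given on basis elements by $\phi(l\basis{h}) = l\basis{i}\basis{h} = l\,c(i,h)\basis{ih}$ is an isomorphism. Well-definedness and $R$-balance come directly from the crossed product relation $\basis{i}r = \ssa{i}{r}\basis{i}$, which matches the twisted action $l\cdot r = l\ssa{i}{r}$ on $L_{\sigma(i)}$; surjectivity is clear since the image is an $S$-submodule containing $a\basis{i} = b$; injectivity uses that $G$ is cancellative, so the components $l_h c(i,h)\basis{ih}$ of a hypothetical nonzero element $\sum_h l_h\basis{h}$ in the kernel sit in distinct degrees $ih$, and each nonzero $l_h$ gives a nonzero component because $c(i,h)$ is a unit.

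Since $N$ is prime, its nonzero submodule $bS \cong \skewm{L_{\sigma(i)}}{G}$ is prime as well and shares the annihilator of $N$. By definition this says that $L_{\sigma(i)}$ is $S$-prime. Setting $J = \ann_R L_{\sigma(i)} = \sigma(i)^{-1}(\ann_R L)$ and $I = \st{J}{G}$, Lemma~\ref{S-prime anns for R*G} then yields both $\ann_S \skewm{L_{\sigma(i)}}{G} = IS$ and the $S$-primeness of $I$; hence $\ann N = IS$, as required. The main subtlety, as I see it, is resisting the urge to take $L$ to be an honest $R$-submodule of $M$: because $a$ itself need not lie in $N$, the relation $\basis{i}r = \ssa{i}{r}\basis{i}$ only lets us absorb coefficients from the (possibly proper) subring $\ssa{i}{R}$, and passing to the twist $L_{\sigma(i)}$ is precisely what restores a full $R$-action and aligns the computation with Lemma~\ref{S-prime anns for R*G}.
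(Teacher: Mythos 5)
Your argument is correct and follows the paper's own route: locate a cyclic homogeneous submodule $bS$ inside $N$ via Lemma~\ref{anns of submods of homog mods} (packaged as Corollary~\ref{submods of homog mods}), identify $a\basis{i}S$ with $\skewm{L_{\sigma(i)}}{G}$ for $L=a\ssa{i}{R}$, and apply Lemma~\ref{S-prime anns for R*G}. The explicit verification of the isomorphism $\phi$ and the closing remark about why the twist is unavoidable are welcome elaborations of steps the paper leaves implicit, but the proof is essentially identical.
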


\begin{proof}
Let $a\in N\smin\{0\}$ be as in Lemma~\ref{anns of submods of homog
mods}, that is, $a=r_1\basis{g_1}+\dots+r_k\basis{g_k}$ is a canonical
form for $a$ with $k$ minimal among nonzero elements of $N$. Thus
$\ann a$ is homogeneous and $\ann a=\ann r_1\basis{g_1}$, so $aS\iso
r_1\basis{g_1}S$. If we set $L=r_1\ssa{g_1}{R}$, then
$r_1\basis{g_1}S\iso\skewm{L_{\sigma(g_1)}}{G}$.

Set $J=\ann_R ( L_{\sigma(g_1)})$.  Then $\ann N=\ann aS=\ann
\skewm{L_{\sigma(g_1)}}{G}=\st{J}{G}S$.
\end{proof}

\smallskip

The next lemma shows that when \revsh{} holds and \psurj, we do not
need to consider the twists of $M$.

\begin{lem}\label{twisted S-prime implies S-prime for R*G}
Let $G$ be a \upm{} and $S=\skewm{R}{G}$.  Let $M$ be an
$R$-module and let $g\in G$.  Suppose that \rev{} and that \psurj. If $L$
is an $S$-prime submodule of $M_{\sigma(g)}$, then $L$ is an
$S$-prime $R$-submodule of $M$.
\end{lem}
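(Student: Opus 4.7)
The plan is to verify condition~(4) of Corollary~\ref{S-prime module for R*G second} for $L$ and then upgrade to $S$-primeness via Corollary~\ref{S-prime module R*G comm or surj}. Because \psurj{}, we have $\ssa{g}{R}=R$, so the set $L$ is simultaneously an $R$-submodule of $M$ (with the usual action) and of $M_{\sigma(g)}$; the $R$-submodules of $L$ in the two interpretations coincide as subsets of $M$, and for each such submodule $N$ one has $\ann_R N_{\sigma(g)}=\sigma(g)^{-1}(\ann_R N)$.

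The crux of the argument will be the identity $\st{\sigma(g)^{-1}(I)}{G}=\sigma(g)^{-1}(\st{I}{G})$ for every ideal $I$ of $R$. To establish it I would unwind both sides using the cocycle relation $\sigma(g)\sigma(h)=\tau_{g,h}\sigma(gh)$, together with the fact that inner automorphisms preserve ideals and their preimages, to obtain $\bigcap_{h\in G}\sigma(gh)^{-1}(I)$ on the left and $\bigcap_{k\in G}\sigma(kg)^{-1}(I)$ on the right. Reversibility then supplies, for each $h\in G$, some $h'\in G$ with $gh\equiv h'g$, and dually, for each $k\in G$, some $h_2\in G$ with $kg\equiv gh_2$; since $\equiv$ passes to equality of preimages of ideals, the two intersections agree.

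Applying $(1)\Rightarrow(4)$ of Corollary~\ref{S-prime module for R*G second} to the $S$-prime module $L_{\sigma(g)}$ and translating through this identity yields $\sigma(g)^{-1}(\st{\ann_R N}{G})=\sigma(g)^{-1}(\st{\ann_R L}{G})$ for every nonzero $R$-submodule $N$ of $L$. Surjectivity of $\sigma(g)$ then lets me cancel to get $\st{\ann_R N}{G}=\st{\ann_R L}{G}$, so $L$ satisfies condition~(4).

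To invoke Corollary~\ref{S-prime module R*G comm or surj} and conclude that $L$ is $S$-prime, the remaining task is to show $\st{\ann_R L}{G}$ is \GGi. Lemma~\ref{S-prime implies s-good for R*G} applied to $L_{\sigma(g)}$ says that $\sigma(g)^{-1}(\st{\ann_R L}{G})=\st{\ann_R L_{\sigma(g)}}{G}$ is \GGi; a reversibility argument parallel to the one in the crux identity (for any $k\in G$, pick $h_2$ with $kg\equiv gh_2$, feed it into the $G$-invariance of the preimage with index $h_2$, and cancel $\sigma(g)^{-1}$ by surjectivity) transfers $G$-invariance back to $\st{\ann_R L}{G}$ itself. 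The main obstacle throughout is the monoid asymmetry between $gG$ and $Gg$, which is precisely the gap that reversibility is designed to bridge at the level of ideals.
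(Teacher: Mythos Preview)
Your argument is correct. The paper takes a shorter route: rather than establishing the identity $\st{\sigma(g)^{-1}(I)}{G}=\sigma(g)^{-1}(\st{I}{G})$ and deducing condition~(4), it verifies condition~(3) of Corollary~\ref{S-prime module for R*G second} for $L$ directly. Given $m\in L$ nonzero and $B$ \GGs{} with $mB=0$, it sets $C=\sigma(g)^{-1}(B)$, observes that $C$ is \GGs{} by Lemma~\ref{preservation of stability}, and that $m*C=m\,\ssa{g}{C}=mB=0$ by surjectivity of $\sigma(g)$; then $(1)\Rightarrow(3)$ for the $S$-prime module $L_{\sigma(g)}$ yields $L*C=LB=0$. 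Your approach trades this one-line ideal pullback for a general annihilator identity---more machinery, but a reusable fact. Where your write-up is actually more careful is the final paragraph: upgrading condition~(3) or~(4) to $S$-primeness via Corollary~\ref{S-prime module R*G comm or surj} requires knowing that $\st{\ann_R L}{G}$ is \GGi, and you explicitly transfer this from $L_{\sigma(g)}$ using reversibility and surjectivity, a step the paper's proof leaves implicit.
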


\begin{proof}
We first note that since $\sigma(g)$ is surjective, $L$ is closed under
multiplication by elements of $R$ and so is an $R$-submodule of $M$.
Suppose that $B$ is a \GGs{} ideal of $R$, $m\in L$, $m\ne0$, and
$mB=0$.  Let $C=\sigma(g)^{-1}(B)$.  By Lemma~\ref{preservation of
stability}, $C$ is \GGs.  Moreover $m*C=mC^g=mB=0$.  Since
$L_{\sigma(g)}$ is $S$-prime, Corollary~\ref{S-prime module R*G
comm or surj}(2) implies $L*C=LB=0$.
\end{proof}

\smallskip

The next proposition sums up our results on associated primes in the
crossed product case.

\begin{prop}\label{ass MS poly for R*G}
Let $G$ be a \upm{} and $S=\skewm{R}{G}$.  Let $M$ be an
$R$-module.
\begin{enumerate}
\item
$\Ass \skewm{M}{G}=
\\
\{\,\skewm{\st{\sigma(g)^{-1}(J)}{G}}{G}\mid g\in G\text{ and
}J\text{ is the annihilator of an $S$-prime submodule of
}M_{\sigma(g)}\,\}$.
\item
If \psurj{} and \rev{}, then $\Ass\skewm{M}{G}=
\\
\{\,\skewm{\st{J}{G}}{G}\mid
J\text{ is the annihilator of an $S$-prime submodule of }M\,\}$.
\item
If $R$ \hasacc{} and \psurj, then $\Ass\skewm{M}{G}=
\{\,\skewm{\st{P}{G}}{G}\mid P\in\Ass M\,\}$.
\end{enumerate}
\end{prop}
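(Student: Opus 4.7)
The plan is to derive all three parts from Proposition~\ref{prime submods of homog mods for R*G}, which produces every associated prime of $\skewm{M}{G}$ from a prime graded submodule, together with Lemma~\ref{S-prime anns for R*G}, which computes annihilators of induced modules. For the ``$\supseteq$'' direction in each part, the key construction is the following: if $L$ is a $\sigma(g)(R)$-submodule of $M$, the $R$-module isomorphism $L_{\sigma(g)}\cong L\basis{g}$ extends to a graded $S$-module isomorphism $\skewm{L_{\sigma(g)}}{G}\cong L\basis{g}S\le\skewm{M}{G}$; when $L_{\sigma(g)}$ is $S$-prime, this exhibits $L\basis{g}S$ as a prime $S$-submodule whose annihilator, by Lemma~\ref{S-prime anns for R*G}, is $\skewm{\st{\sigma(g)^{-1}(\ann_R L)}{G}}{G}$. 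Part (1) follows immediately from Proposition~\ref{prime submods of homog mods for R*G} (for ``$\subseteq$'') and this construction (for ``$\supseteq$''), with no extra hypotheses.

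For part (2), first apply Lemma~\ref{twisted S-prime implies S-prime for R*G} (requiring \psurj{} and \rev) to convert an $S$-prime submodule $L_{\sigma(g)}$ of $M_{\sigma(g)}$ into an $S$-prime $R$-submodule $L$ of $M$. The technical heart is then the identity $\st{\sigma(g)^{-1}(\ann L)}{G} = \st{\ann L}{G}$. The ``$\supseteq$'' inclusion uses that $\st{\ann L}{G}$ is \GGi{} (Lemma~\ref{S-prime implies s-good for R*G}) and is therefore fixed by $\sigma(g)^{-1}$ under \psurj. For the ``$\subseteq$'' inclusion, let $A$ be a \GGs{} ideal with $\sigma(g)(A)\subseteq\ann L$; the computation $\ell\basis{g}\cdot a = \ell\sigma(g)(a)\basis{g}=0$ gives $L\basis{g}\cdot A=0$, and since $A$ is right $S$-stable (Lemma~\ref{s-good conditions for R*G}) this extends to $L\basis{g}S\cdot A=0$. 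But $L\basis{g}S$ is a nonzero submodule of the prime $S$-module $\skewm{L}{G}$, so $A\subseteq\ann_R L\basis{g}S=\ann_R\skewm{L}{G}=\st{\ann L}{G}$ by Lemma~\ref{S-prime anns for R*G}.

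For part (3), a.c.c.\ and \psurj{} replace the role of \rev. Given an associated prime $\mathfrak{P}=\ann r_1\basis{g_1}S$ of $\skewm{M}{G}$ as produced by Proposition~\ref{prime submods of homog mods for R*G}, the a.c.c.\ on $r_1 R\le M$ supplies a prime submodule $K\le r_1 R$ with $P:=\ann K\in\Ass M$. Since $K$ is prime, every nonzero submodule of $K$ has annihilator $P$, so Corollary~\ref{S-prime module R*G acc}(4) makes $K$ itself $S$-prime; thus $\skewm{K}{G}$ is prime with annihilator $\skewm{\st{P}{G}}{G}$ (Lemma~\ref{S-prime anns for R*G}). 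Because $r_1\basis{g_1}S$ is prime and $K\basis{g_1}S$ is a nonzero submodule of it contained in $\skewm{K}{G}$, we obtain $\mathfrak{P}=\ann K\basis{g_1}S=\ann\skewm{K}{G}=\skewm{\st{P}{G}}{G}$; the analogue of the part~(2) identity is routine here, since under a.c.c.\ and \psurj{} Lemma~\ref{s-stable implies s-good with acc} yields $\sigma(g_1)(A)=A$ for every \GGs{} ideal $A$, so $\sigma(g_1)(A)\subseteq P \Leftrightarrow A\subseteq P$. Conversely, any $P\in\Ass M$ is $\ann K$ for some prime submodule $K$, and the same $S$-primeness of $K$ yields $\skewm{\st{P}{G}}{G}\in\Ass\skewm{M}{G}$. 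The main obstacle throughout is the equality $\st{\sigma(g)^{-1}(\ann L)}{G}=\st{\ann L}{G}$, whose ``$\subseteq$'' direction in part (2) requires the indirect argument through primeness of $\skewm{L}{G}$, while in part (3) it becomes routine via the a.c.c.
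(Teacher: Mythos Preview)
Your argument is correct and follows the paper's route. The paper is terse: for (1) and (2) it simply cites Proposition~\ref{prime submods of homog mods for R*G} and, for (2), Lemma~\ref{twisted S-prime implies S-prime for R*G}; you supply the identity $\st{\sigma(g)^{-1}(\ann L)}{G}=\st{\ann L}{G}$ that the paper leaves implicit, and your device---viewing $L\basis{g}S$ as a nonzero submodule of the prime module $\skewm{L}{G}$---is exactly the natural way to see it. For (3) there is a small organizational difference: the paper begins its ``$\subseteq$'' argument from the conclusion of part~(2) (writing an associated prime as $\skewm{\st{J}{G}}{G}$ with $J=\ann N$ for an $S$-prime $R$-submodule $N\le M$) and then passes to a prime submodule of $N$, whereas you bypass (2) and argue directly from Proposition~\ref{prime submods of homog mods for R*G}, using $K\basis{g_1}S$ as a common nonzero submodule of the two prime modules $r_1\basis{g_1}S$ and $\skewm{K}{G}$. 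Your route is slightly cleaner since (3) does not assume \revsh{}; the paper's shortcut is nonetheless easy to justify, because under a.c.c.\ and \psurj{} Lemma~\ref{s-stable implies s-good with acc} gives $\ssa{g}{B}=B$ for every \GGs{} ideal $B$, and with this the proof of Lemma~\ref{twisted S-prime implies S-prime for R*G} goes through without \revsh{}.
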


\begin{proof}
(1) This follows from Proposition~\ref{prime submods of homog mods for
R*G}.

(2) This follows from Proposition~\ref{prime submods of homog mods for
R*G} and Lemma~\ref{twisted S-prime implies S-prime for R*G}.

(3) First suppose $P\in\Ass M$, so $P=\ann L$ for some prime
submodule $L$ of $M$.  The ideal $\st{P}{G}$ is \GGs{} and
$\st{P}{G}=\st{\ann K}{G}$ for all nonzero submodules $K$ of $L$. Thus
$L$ is $S$-prime by Corollary~\ref{S-prime module R*G acc}, and so
$\skewm{\st{P}{G}}{G}\in\Ass\skewm{M}{G}$.

Next, let $\skewm{\st{J}{G}}{G}\in\Ass \skewm{M}{G}$, where
$J=\ann N$ for an $S$-prime submodule $N$ of $M$.  Let $L$ be a
nonzero submodule of $N$ such that $P=\ann L$ is as large as possible.  It
is well-known that in this case, $L$ is a prime module, so $P\in\Ass M$.
By Corollary~\ref{S-prime module R*G acc}, $\st{J}{G}=\st{P}{G}$.
\end{proof}

\smallskip

Proposition~\ref{ass MS poly for R*G}(1) describes $\Ass
\skewm{M}{G}$ in general, but it requires us to check a great many modules.
Lemma~\ref{prime implies twisted prime for R*G} below shows that once
we find a twist that is prime, any twist of that is automatically prime as
well, so we don't need to check twists of twists.  First, we need another
lemma.

\begin{lem}\label{S-prime, twists, and inner autos}
Let $G$ be a \upm{}, $S=\skewm{R}{G}$, $\phi$ be an endomorphism
of $R$, and $\tau$ be an inner automorphism of $R$.  Then the following
conditions are equivalent.
\begin{enumerate}
\item
$M_{\phi}$ is $S$-prime.
\item
$M_{\tau\cp\phi}$ is $S$-prime.
\item
$M_{\phi\cp\tau}$ is $S$-prime.
\end{enumerate}
\end{lem}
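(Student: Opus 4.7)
The plan is to show all three modules are isomorphic as $R$-modules, from which $S$-primeness of the induced modules follows at once.

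First I would handle $(1)\iff(2)$. Write $\tau(r)=uru^{-1}$ for some unit $u\in R$. Define $f\colon M_{\phi}\to M_{\tau\cp\phi}$ by $f(m)=mu^{-1}$, where the right-hand side uses the original $R$-action on the underlying abelian group $M$. This is a bijection because $u$ is a unit, and if $*$ and $*'$ denote the actions on $M_{\phi}$ and $M_{\tau\cp\phi}$ respectively, then for $m\in M$, $r\in R$,
\[
f(m*r)=f\bigl(m\phi(r)\bigr)=m\phi(r)u^{-1},\qquad
f(m)*'r=mu^{-1}\cdot u\phi(r)u^{-1}=m\phi(r)u^{-1},
\]
so $f$ is an $R$-module isomorphism. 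Hence the induced modules $\skewm{(M_{\phi})}{G}$ and $\skewm{(M_{\tau\cp\phi})}{G}$ are isomorphic as right $S$-modules, and one is prime iff the other is.

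Next, for $(1)\iff(3)$, I would reduce to the previous case by conjugating $\tau$ across $\phi$. Since $\phi$ is a ring endomorphism with $\phi(1)=1$, $\phi(u)$ is a unit in $R$ (with inverse $\phi(u^{-1})$); let $\tau'$ denote conjugation by $\phi(u)$, which is again an inner automorphism. A direct computation gives, for every $r\in R$,
\[
(\tau'\cp\phi)(r)=\phi(u)\phi(r)\phi(u)^{-1}=\phi(uru^{-1})=(\phi\cp\tau)(r),
\]
so $\phi\cp\tau=\tau'\cp\phi$ and therefore $M_{\phi\cp\tau}=M_{\tau'\cp\phi}$. Applying the $(1)\iff(2)$ step already proved, with $\tau$ replaced by $\tau'$, we conclude that $M_{\phi\cp\tau}$ is $S$-prime iff $M_{\phi}$ is.

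There isn't really a hard step here; the only minor point to verify carefully is that $\phi(u)$ is a unit so that $\tau'$ is genuinely an inner automorphism, and that the twist convention in the definition of $M_{\phi}$ matches up correctly on both sides of the map $f$. Once those are checked, both equivalences reduce to transporting the induced $S$-module structure along an $R$-module isomorphism.
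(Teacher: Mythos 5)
Your proof is correct, but it takes a genuinely different route from the paper's. The paper reduces everything to the implication $(1)\implies(2)$ (using $M_{\psi\cp\phi}=(M_{\psi})_{\phi}$ and the fact that $\tau^{-1}$ is again inner) and then proves that implication by verifying the ideal-theoretic primeness criterion of Proposition~\ref{S-prime module for R*G}: from $m\mathbin{**}B^g=0$ one passes to $mu*B^g=0$ and invokes $S$-primeness of $M_{\phi}$. You instead observe that right multiplication by $u^{-1}$ is an $R$-module isomorphism $M_{\phi}\to M_{\tau\cp\phi}$, so the induced $S$-modules are isomorphic and primeness transfers for free; for $(3)$ you rewrite $\phi\cp\tau=\tau'\cp\phi$ with $\tau'$ the (inner) conjugation by the unit $\phi(u)$ and reuse the first step. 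Your computations check out, including the compatibility with the paper's twist convention $M_{\psi\cp\phi}=(M_{\psi})_{\phi}$ and the fact that $\phi(u)$ is a unit with inverse $\phi(u^{-1})$. Your argument is more structural and proves something slightly stronger --- the three modules are pairwise isomorphic as $R$-modules, not merely simultaneously $S$-prime --- and it would transfer any isomorphism-invariant property, not just primeness; the paper's argument has the mild advantage of staying entirely within the primeness machinery it has already set up, at the cost of an explicit element-and-ideal computation.
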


\begin{proof}
Note that if $\psi$ is an endomorphism of $R$, then
$M_{\psi\cp\phi}=(M_{\psi})_{\phi}$.  Since $\tau^{-1}$ is also inner,
this observation implies that if we prove $(1)\implies (2)$ for arbitrary
$\tau$, then $(2)\implies (1)$.  Applying (2) with $\phi=\id$, we see that
any $M_{\tau}$ is prime whenever $M$ is $S$-prime.  Replacing $M$ by
$M_{\phi}$ and using $M_{\phi\cp\tau}=(M_{\phi})_{\tau}$, we see
that $(1)\implies (3)$.  We then get $(3)\implies (1)$ as we get
$(2)\implies (1)$.

Thus we only need to prove $(1)\implies (2)$. Let $*$ denote the action in
$M_{\phi}$, that is $m*r=m\phi(r)$ and let $\mathbin{**}$ denote the
action in $M_{\tau\cp\phi}$, that is $m\mathbin{**}r=m\tau(\phi(r))$.

Suppose that $M_{\phi}$ is $S$-prime, $m$ is a nonzero element of $M$,
$B$ is a \GGs{} ideal of $R$, and $g\in G$.  Let $u$ be a unit of $R$ such
that $\tau(r)=uru^{-1}$ and suppose that $m\mathbin{**}B^g=0$. Then
$0=m\tau(\phi(B^g))=mu\phi(B^g)u^{-1}$.  Thus
$0=mu\phi(B^g)=mu*(B^g)$.  Since $mu\ne0$ and $M_{\phi}$ is
$S$-prime, we conclude that $0=M*B=M\phi(B)=Mu\phi(B)$.  This
implies $0=Mu\phi(B)u^{-1}=M\mathbin{**}B$.  This proves
$M_{\tau\cp\phi}$ is $S$-prime.
\end{proof}

\begin{lem}\label{prime implies twisted prime for R*G}
Let $G$ be a \upm{} and $S=\skewm{R}{G}$.  Let $M$ be an
$R$-module.  If $M$ is $S$-prime, then $M_{\sigma(h)}$ is $S$-prime
for all $h\in G$.  More generally, if $M_{\sigma(g)}$ is $S$-prime, then
$M_{\sigma(hg)}$ is $S$-prime for all $h\in G$.
\end{lem}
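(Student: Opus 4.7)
The plan is to prove the more general statement in two stages: first establish the base case $g=e$, which is the first sentence of the lemma, and then apply this base case to the $S$-prime module $M_{\sigma(g)}$ and invoke Lemma~\ref{S-prime, twists, and inner autos} to handle the inner-automorphism factor that appears.

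For the base case, I would define a map $\psi\colon\skewm{M_{\sigma(h)}}{G}\to\skewm{M}{G}$ by $m\otimes s\mapsto m\basis{h}s$. This is well-defined on the tensor product because the $M_{\sigma(h)}$-action $m\ast r=m\ssa{h}{r}$ matches the crossed-product relation $\basis{h}r=\ssa{h}{r}\basis{h}$, so $(m\ast r)\otimes s$ and $m\otimes rs$ have the same image. It is right $S$-linear by the associativity of multiplication in $\skewm{M}{G}$, and its image is precisely the right $S$-submodule $M\basis{h}S$. For injectivity, a generic element $\sum_k m_k\otimes\basis{k}$ maps to $\sum_k m_kc(h,k)\basis{hk}$: the degrees $hk$ are distinct by cancellativity of $G$, and each coefficient $m_kc(h,k)$ vanishes iff $m_k=0$ since $c(h,k)$ is a unit. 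Because $M$ is $S$-prime, $\skewm{M}{G}$ is prime (and in particular nonzero), so the nonzero submodule $M\basis{h}S\iso\skewm{M_{\sigma(h)}}{G}$ is prime as a nonzero submodule of a prime module; thus $M_{\sigma(h)}$ is $S$-prime.

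For the general statement, apply the base case with $M_{\sigma(g)}$ in place of $M$ to obtain that $(M_{\sigma(g)})_{\sigma(h)}$ is $S$-prime for every $h\in G$. Direct unwinding of the iterated action shows $(M_{\sigma(g)})_{\sigma(h)}=M_{\sigma(g)\cp\sigma(h)}$, and condition~(c) of the crossed-product definition rewrites $\sigma(g)\cp\sigma(h)=\tau_{g,h}\cp\sigma(gh)$, where $\tau_{g,h}$ is conjugation by the unit $c(g,h)$. Lemma~\ref{S-prime, twists, and inner autos} then allows us to discard this inner automorphism, yielding that the relevant twist of $M$ is $S$-prime. The main technical content is the construction and verification of $\psi$; once that is in place, the general case is a formal manipulation of twists combined with the lemma on inner automorphisms.
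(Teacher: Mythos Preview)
Your argument for the base case is correct and genuinely different from the paper's. The paper verifies condition~(3) of Proposition~\ref{S-prime module for R*G} directly: given nonzero $m$, a \GGs{} ideal $B$, and $h\in G$ with $m*\ssa{h}{B}=0$ in $M_{\sigma(g)}$, it unwinds this to $(mu)\ssa{gh}{B}=0$ in $M$ for a unit $u$, applies $S$-primeness of $M$ to get $MB=0$, and then uses $G$-stability of $B$ to conclude $M*B=M\ssa{g}{B}=0$. Your approach is more structural: you exhibit $\skewm{M_{\sigma(h)}}{G}$ as isomorphic to the submodule $M\basis{h}S$ of the prime module $\skewm{M}{G}$, and a nonzero submodule of a prime module is prime. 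Your route avoids the elementwise criterion entirely and makes the statement almost tautological once the embedding is in hand; the paper's route stays closer to the characterizations developed in Section~\ref{The crossed product case} and would adapt more readily if one wanted finer control over annihilators.

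For the general statement the two arguments coincide: both apply the base case to $M_{\sigma(g)}$ and then invoke Lemma~\ref{S-prime, twists, and inner autos}. One caution: your formula $\sigma(g)\cp\sigma(h)=\tau_{g,h}\cp\sigma(gh)$ yields that $M_{\sigma(gh)}$ is $S$-prime, whereas the lemma as stated asks for $M_{\sigma(hg)}$. The paper's own proof writes $(M_{\tau})_{\sigma(hg)}$ with $\tau$ conjugation by $c(h,g)$, but the identity $\sigma(g)\cp\sigma(h)=\tau_{g,h}\cp\sigma(gh)$ recorded after the crossed-product axioms gives $gh$, not $hg$; the two agree in every application in the paper (where $G=\bbN$), but for noncommutative $G$ your phrase ``the relevant twist'' hides this discrepancy rather than resolving it. Your argument, like the paper's, actually proves the $gh$ version.
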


\begin{proof}
Suppose $M$ is $S$-prime, $m\in M$ is nonzero, $g,h\in G$, and $B$ is a
\GGs{} ideal of $R$ with $m*\ssa{h}{B}=0$.  Then there is a unit $u\in R$ with
$0=m\ssaa{g}{h}{B}=mu\ssa{gh}{B}u^{-1}$.  Thus $(mu)B^{gh}=0$
and $mu\ne0$, so by primeness, $MB=0$.  Since $B$ is \GGs, we have
$M*B=MB^g\sub MB=0$.  This proves $M_{\sigma(g)}$ is $S$-prime.

Now suppose $M_{\sigma(g)}$ is $S$-prime and $h\in G$.  Then
$(M_{\sigma(g)})_{\sigma(h)}=M_{\sigma(g)\cp\sigma(h)}=
(M_{\tau})_{\sigma(hg)}$, where $\tau$ is conjugation by $c(h,g)$.
Since $(M_{\sigma(g)})_{\sigma(h)}$ is $S$-prime by the first part of
the proof, we can conclude from Lemma~\ref{S-prime, twists, and inner
autos} that $M_{\sigma(hg)}$ is $S$-prime.
\end{proof}

\section{The skew polynomial and skew laurent cases}
\label{The skew polynomial and skew laurent cases}

In this section, $\sigma$ is a ring endomorphism of $R$ and
$S=R[x;\sigma]$, or $\sigma$ is an automorphism and
$S=R[x^{\pm1};\sigma]$. Skew polynomial rings form a special case of
the crossed products considered in Section~\ref{The crossed product
case}, with $G=\bbN$, $\sigma(g)=\sigma^g$, and $c(g,h)$ identically
$1$.  Note also that \psurj{} if and only if $\sigma$ is surjective.  Skew
laurent rings form a special case of the strongly graded rings considered in
Section~\ref{The strongly graded case}.  In the skew laurent case
$S_1=Rx$, so $S_1IS_{-1}=RxIRx^{-1}=\{\,\sigma(i)\mid i\in I\,\}$ for
$I\sub R$. Thus our previous use of the notation $\sigma(I)$ does not
conflict with our new use of $\sigma$.  Likewise, $\sigma^{-1}(I)$ is the
same set under either meaning of the notation.

We will switch to the more common terminology of ``\sss{}'' and ``\ssi{}''
in place of ``\GGs{}'' and ``\GGi{}'' and the notation $\st{I}{\sigma}$ in
place of $\st{I}{G}$.  Thus we say a subset $X$ of $R$ is \define{\sss} if
$\sigma(X)\sub X$, and we say $X$ is \define{\ssi} if
$\sigma^{-1}(X)=X$.

When $S=R[x;\sigma]$, the induced module $M\otimes_R S$ can be
identified with $M[x;\sigma]=\oplus_{n=0}^\infty Mx^n$.  In case
$S=R[x^{\pm1};\sigma]$, we identify  $M\otimes_R S$ with
$M[x^{\pm1};\sigma]=\oplus_{n=-\infty}^\infty Mx^n$.  We write
$M[x;\sigma]x^k$ for the $S$-module $\oplus_{n=k}^\infty Mx^n$,
and we will write $I[x;\sigma]$ or $I[x^{\pm1};\sigma]$ in place of $IS$
for an ideal $I$ of $R$.

As one would expect, the results in this section correspond to the results in
Sections~\ref{The crossed product case} and~\ref{The strongly graded
case}, generally with some simplifications.

\smallskip

\begin{lem}\label{s-good conditions}
Let $I\normal R$ and let $\sigma$ be an endomorphism of $R$.
\begin{enumerate}
\item
If $S=R[x;\sigma]$, then $I$ is right $S$-stable if and only if $I$ is
\sss.
\item
If $S=R[x;\sigma]$ and $I$ is right $S$-prime, then $I$ is \ssi.
\item
If $\sigma$ is an automorphism and $S=R[x^{\pm1};\sigma]$, then
$I$ is [right] $S$-stable if and only if $I$ is \ssi.
\end{enumerate}
\end{lem}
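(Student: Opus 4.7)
The plan is to specialize the earlier crossed product and strongly graded results to the present setting by identifying $R[x;\sigma]$ with the crossed product $\skewm{R}{\bbN}$ (taking $\sigma(n)=\sigma^n$ and trivial cocycle) and $R[x^{\pm1};\sigma]$ with the strongly $\bbZ$-graded ring whose degree-$n$ component is $Rx^n$. All three statements then reduce, after a brief notational translation, to results already proved.

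For part (1), I would apply Lemma~\ref{s-good conditions for R*G}, which says $I$ is right $S$-stable if and only if $I$ is \GGs, i.e., $\sigma^n(I)\sub I$ for all $n\in\bbN$. A one-line induction collapses this to the single condition $\sigma(I)\sub I$, which is \sss. Part (2) is an application of Lemma~\ref{S-prime implies s-good for R*G}(1); its reversibility hypothesis is automatic here because $G=\bbN$ is commutative, so $gG=Gg$ holds trivially, as the text remarks just after the definition of \revsh. The conclusion that $I$ is \GGi{} unfolds to $(\sigma^n)^{-1}(I)=I$ for all $n\in\bbN$, which specializes at $n=1$ to $\sigma^{-1}(I)=I$, that is, \ssi.

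For part (3), the plan is to apply Lemma~\ref{sg-good conditions}, which states that in a strongly $G$-graded ring with $G$ a group, right $S$-stability of $I$ is equivalent to \GGi-ness. The one computation to carry out is the identification of the abstract action $\ssa{n}{I}=S_n I S_{-n}$ with $\sigma^n(I)$: since $x^n r=\sigma^n(r)x^n$, we get $x^n I x^{-n}=\sigma^n(I)$, and because $\sigma$ is an automorphism, $\sigma^n(I)$ is already a two-sided ideal of $R$, so $R\cdot\sigma^n(I)\cdot R=\sigma^n(I)$. Hence $\ssa{n}{I}=\sigma^n(I)$, and \GGi-ness becomes $\sigma^n(I)=I$ for all $n\in\bbZ$, which collapses to the single condition $\sigma(I)=I$ (equivalently $\sigma^{-1}(I)=I$), namely \ssi.

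The only mild obstacle is this notational translation between the abstract $\GGs/\GGi$ conditions (defined for all elements of the grading monoid) and the concrete $\sss/\ssi$ conditions (defined in terms of $\sigma$ alone); once the action is unwound and the induction on powers of $\sigma$ is noted, each assertion is immediate from the previously established lemmas.
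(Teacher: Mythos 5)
Your proposal is correct and follows essentially the same route as the paper, which simply cites Lemma~\ref{s-good conditions for R*G} for parts (1)--(2) and Lemma~\ref{sg-good conditions} for part (3) and leaves the translation between \GGs/\GGi{} and \sss/\ssi{} to the reader. In fact you are slightly more careful than the paper's one-line proof: part (2) really rests on Lemma~\ref{S-prime implies s-good for R*G}, and you correctly note that its reversibility hypothesis is automatic for the commutative monoid $\bbN$.
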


\begin{proof}
(1) \& (2) These follow from Lemma~\ref{s-good conditions for R*G}.

 (3) This follows from Lemma~\ref{sg-good conditions}.
\end{proof}

If $X\sub R$, we define
\[
\st{X}{\sigma}=\cap_{n=0}^\infty \sigma^{-n}(X) =
\{\,r\in R\mid \sigma^n(r)\in X\text{ for all }n\in\bbN\,\}
\ \ \text{and}\ \
\st{X}{\sigma^{\pm}}=\cap_{m=-\infty}^\infty \sigma^{m}(X).
\]
If $X$ is an ideal of $R$, so is $\st{X}{\sigma}$, and if $\sigma$ is
surjective and $X$ is an ideal of $R$, so is $\st{X}{\sigma^{\pm}}$.
Clearly $\st{X}{\sigma}$ is the largest \sss{} subset of $R$ contained in
$X$, and if $\sigma$ is an automorphism, then $\st{X}{\sigma^{\pm}}$
is the largest
\ssi{} subset of $R$ contained in $X$.

\smallskip

We now give explicit conditions for $S$-primeness in the skew polynomial
and skew laurent cases.  As noted in the introduction, finding such
conditions for skew polynomial rings was one of our motivations. In the
case $S=R[x;\sigma]$, definitions of $\sigma$-prime ideal seem to have
been given first in Goldie-Michler \cite{GM74}, assuming $\sigma$ is an
automorphism and $R$ is right Noetherian, in Pearson-Stephenson
\cite{PS77}, assuming $\sigma$ is an automorphism, and in Irving
\cite{Irv79}, assuming $R$ is commutative. None of these definitions
guarantee that $I[x;\sigma]$ is prime in the general case.

The Goldie-Michler definition is the following: a \ssi{} ideal $I$ is
$\sigma$-prime if whenever $J,K$ are \ssi{} ideals and $JK\sub I$,
either $J\sub I$ or $K\sub I$.  (The definition of $H$-prime in
Montgomery-Schneider \cite{MS99} is virtually the same, except that
they use ``stable'' in place of ``invariant''.) This is certainly the most
aesthetically pleasing definition, and it guarantees that
$I[x^{\pm1};\sigma]$ is prime in the the skew laurent extension
$S=R[x^{\pm1};\sigma]$. Unfortunately, it does not guarantee that
$I[x;\sigma]$ is prime in $S=R[x;\sigma]$, even if $\sigma$ is an
automorphism.

The following lemma gives equivalent conditions generalizing the Irving
and Pearson-Stephenson conditions.  Since our conditions, as well as those
in the rest of this section, are shaped by the assumption that coefficients of
elements of $S=R[x;\sigma]$ are written on the left and $xr=\sigma(r)x$,
they may be left--right reversed from the conditions stated in parts of the
literature.

\begin{prop}\label{S-prime ideal poly}
Let $\sigma$ be an endomorphism of $R$ and $S=R[x;\sigma]$.  Let $I$
be a \sss{} ideal of $R$.  Then the following conditions are equivalent.
\begin{enumerate}
\item
$I$ is right $S$-prime.
\item
If $a,b\in R$, $p\in\bbN$, and $a\sigma^p(R)\sigma^q(b)\sub I$ for
all $q\ge p$, then $a\in I$ or $b\in I$.
\item
If $A$ is a left ideal of $R$, $B$ is a \sss{} ideal of $R$, and
$A\sigma^p(B)\sub I$ for some $p\in\bbN$, then $A\sub I$ or
$B\sub I$.
\end{enumerate}
If $I$ is \ssi, and either $R$ is commutative or $\sigma$ is surjective,
then all of the above conditions are equivalent to:
\\
\hspace*{1em}
(4) If $A,B$ are ideals of $R$ with $B$ \sss{} and $AB\sub I$, then
$A\sub I$ or $B\sub I$.
\end{prop}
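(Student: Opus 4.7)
The plan is to deduce this proposition directly from the general crossed product results Proposition~\ref{S-prime ideal R*G} and Corollary~\ref{S-prime ideal R*G comm or surj}, applied with $G=\bbN$, $\sigma(n)=\sigma^n$, and trivial cocycle $c\equiv 1$. Two features of the specialization simplify matters considerably: first, $\bbN$ is commutative, so $gG=Gg$ for all $g$ and hence the reversibility condition of Section~\ref{The crossed product case} holds automatically; second, since $\sigma^0=\id$, the phrase ``$\sigma^h(b)\in I$ for all $h\in G$'' appearing in the general result collapses to the single requirement $b\in I$ (the forward direction by taking $h=0$, and the reverse direction because $I$ is \sss).

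For the equivalence of (1), (2), (3), I would apply Proposition~\ref{S-prime ideal R*G} to the \GGs{} ideal $I$ with $G=\bbN$. Condition~(2) of that proposition reads: if $a\sigma^p(R)\sigma^{p+h}(b)\subseteq I$ for all $h\in\bbN$, then $a\in I$ or $\sigma^h(b)\in I$ for all $h\in\bbN$. Re-indexing $q=p+h$ gives exactly condition~(2) of the present proposition, and as noted above the conclusion on $b$ collapses to $b\in I$. Condition~(3) of Proposition~\ref{S-prime ideal R*G} translates verbatim into condition~(3) here.

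For the final equivalence with (4), I would invoke Corollary~\ref{S-prime ideal R*G comm or surj} under the added hypothesis that $I$ is \ssi{} and either $R$ is commutative or $\sigma$ is surjective (i.e., \psurj{} in the language of the general setting). Since reversibility is automatic for $G=\bbN$, the corollary gives that $I$ is $S$-prime if and only if $AB\subseteq I$ implies $A\subseteq I$ or $B\subseteq I$ whenever $A,B$ are ideals of $R$ with $B$ \GGs{}, which is precisely condition~(4).

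I expect no real obstacle beyond bookkeeping: checking that each of the conditions translates cleanly under the identifications $\ssa{g}{r}=\sigma^g(r)$ and $\ssaa{g}{h}{b}=\sigma^{g+h}(b)$, and verifying that the clause ``for all $h\in G$'' in the conclusion of Proposition~\ref{S-prime ideal R*G}(2) reduces to a single condition on $b$ using that $\sigma^0=\id$ and $I$ is \sss{}. The main point is simply to observe that all the auxiliary hypotheses of the crossed product results (the cocycle structure, reversibility) either trivialize or are automatic in this abelian monoid setting.
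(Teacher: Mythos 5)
Your proposal is correct and is essentially the paper's own proof: the paper simply cites Proposition~\ref{S-prime ideal R*G} and Corollary~\ref{S-prime ideal R*G comm or surj} as giving this result as a special case, and your bookkeeping (reversibility is automatic for the commutative monoid $\bbN$, and the conclusion ``$\sigma^h(b)\in I$ for all $h$'' collapses to ``$b\in I$'' via $h=0$ and $\sigma$-stability of $I$) correctly fills in the details of that specialization.
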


\begin{proof}
This is a special case of Proposition~\ref{S-prime ideal R*G} and
Corollary~\ref{S-prime ideal R*G comm or surj}.
\end{proof}

\smallskip

The conditions for being $S$-prime when $S=R[x^{\pm1};\sigma]$ are a
special case of those in Proposition~\ref{S-prime ideal sg}; here we state
them in slightly different terms.

\begin{prop}\label{S-prime ideal laurent}
Let $\sigma$ be an automorphism of $R$ and $S=R[x^{\pm1};\sigma]$ .
Let $I$ be a \ssi{} ideal of $R$.  Then the following conditions are
equivalent.
\begin{enumerate}
\item
$I$ is [right] $S$-prime.
\item
If $a,b\in R$ and $\sigma^p(a)R\sigma^q(b)\sub I$ for all
$p,q\in\bbZ$, then $a\in I$ or $b\in I$.
\item
If $A,B$ are \ssi{} ideals of $R$ with $AB\sub I$, then $A\sub I$ or
$B\sub I$.
\qedsymbol
\end{enumerate}
\end{prop}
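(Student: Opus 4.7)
The plan is to recognize this proposition as a direct translation of Proposition~\ref{S-prime ideal sg} to the skew Laurent setting. The skew Laurent ring $S=R[x^{\pm1};\sigma]$ is strongly $\bbZ$-graded with $S_n=Rx^n$, and $\bbZ$ is a \upg{} (being an ordered group). Thus Proposition~\ref{S-prime ideal sg} applies directly to $S$, and I only need to reconcile the terminology of Section~\ref{The strongly graded case} with the more specialized terminology of the present section.

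The key translation is the identity $\ssa{n}{I}=S_nIS_{-n}=Rx^nI\,Rx^{-n}=\sigma^n(I)$ for any ideal $I\normal R$ and $n\in\bbZ$; this follows from $x^nr=\sigma^n(r)x^n$ together with $x^nRx^{-n}=R$ (since $\sigma$ is an automorphism). Consequently, an ideal of $R$ is \GGi{} in the sense of Section~\ref{The strongly graded case} if and only if it is \ssi{} in the sense of this section, so the hypotheses on $I$ match and condition~(3) here is exactly condition~(3) of Proposition~\ref{S-prime ideal sg}.

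For condition~(2), I would compute
\[
S_gaS_{-g-h}bS_h \;=\; Rx^g\,a\,Rx^{-g-h}\,b\,Rx^h \;=\; R\sigma^g(a)R\,\sigma^{-h}(b)R,
\]
where the last equality uses $x^gR=Rx^g$, $x^g Rx^{-g-h}=Rx^{-h}$, and $x^{-h}b=\sigma^{-h}(b)x^{-h}$ together with $x^{-h}Rx^h=R$. Since $I$ is two-sided, $S_gaS_{-g-h}bS_h\sub I$ is equivalent to $\sigma^g(a)R\sigma^{-h}(b)\sub I$. Letting $(p,q)=(g,-h)$ and noting that this substitution gives a bijection $\bbZ^2\to\bbZ^2$, the quantifier over all $g,h\in\bbZ$ becomes the quantifier over all $p,q\in\bbZ$, yielding condition~(2) here.

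Having matched both the \GGi{}/\ssi{} terminology and the two elementwise conditions, the equivalences $(1)\iff(2)\iff(3)$ follow immediately from Proposition~\ref{S-prime ideal sg}. There is no real obstacle; the only care required is tracking the inverse in $(hg)^{-1}$ through the additive notation on $\bbZ$ when rewriting the products $S_g a S_{(hg)^{-1}} b S_h$.
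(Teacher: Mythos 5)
Your proposal is correct and is exactly the paper's intended argument: the paper gives no separate proof, stating only that these conditions are ``a special case of those in Proposition~\ref{S-prime ideal sg}, \dots{} stated in slightly different terms,'' and your translation (strong $\bbZ$-grading with $S_n=Rx^n$, the identification $S_nIS_{-n}=\sigma^n(I)$, and the change of variables $(p,q)=(g,-h)$ in condition~(2)) supplies precisely the reconciliation that is being left to the reader.
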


\bigskip

We next give conditions for $S$-primeness of a module when
$S=R[x;\sigma]$.  The following result is a special case of
Proposition~\ref{S-prime module for R*G} and Corollaries~\ref{S-prime
module for R*G second} and~\ref{S-prime module R*G comm or surj}.

Example~\ref{prime doesn't imply S-prime} shows that conditions~(4)
and~(5) in Proposition~\ref{S-prime module poly} below are not
equivalent to the others for an arbitrary $\sigma$, even when $\st{\ann
M}{\sigma}$ is \ssi.  Example~\ref{M prime but MS no ass primes
example} shows that the hypothesis ``$\st{\ann M}{\sigma}$ is \ssi'' is
necessary for the equivalence of (4) and (5) to the other conditions even if
we assume $R$ is commutative and $\sigma$ is an automorphism. (If $R$
is an integral domain and $\sigma:R\to R$ is a non-injective ring
homomorphism, then $R$ is not $S$-prime but it does satisfy conditions
(4) and (5).  This is another example that the hypothesis ``$\st{\ann
M}{\sigma}$ is \ssi'' is necessary for the equivalence of (4) and (5) to the
other conditions.)

\begin{prop}\label{S-prime module poly}
Let $S=R[x;\sigma]$ where $\sigma$ is an endomorphism of $R$ and let
$M$ be a nonzero $R$-module.  Consider the following conditions.
\\
The implications $(1)\iff (2)\iff (3)\implies (4)\iff (5)$ are always true.
\\
If $\st{\ann M}{\sigma}$ is \ssi{} and either $\sigma$ is surjective or
$R$ is commutative, then all five conditions are equivalent.
\begin{enumerate}
\item
$M$ is $S$-prime.
\item
If $m\in M$, $b\in R$, $p\in\bbN$, and
$m\sigma^p(R)\sigma^q(b)=0$ for all $q\ge p$, then $m=0$ or
$M\sigma^k(b)=0$ for all $k\in\bbN$.
\item
If $m\in M$, $B$ is a \sss{} ideal of $R$, and $m\sigma^p(B)=0$ for
some $p\in\bbN$, then $m=0$ or $MB=0$.
\item
If $m\in M$ and $B$ is a \sss{} ideal of $R$ with $mB=0$, then $m=0$
or $MB=0$.
\item
If $N$ is a nonzero submodule of $M$, then $\st{\ann
N}{\sigma}=\st{\ann M}{\sigma}$.\qedsymbol
\end{enumerate}
\end{prop}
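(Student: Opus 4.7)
The plan is to deduce this statement directly from the crossed product results, observing that $S = R[x;\sigma]$ is the crossed product with $G = \bbN$, $\sigma(n) = \sigma^n$, and trivial cocycle $c \equiv 1$. The first thing to note is that $\bbN$ is commutative, so $gG = Gg$ for all $g \in G$, and hence \revsh{} holds automatically. This means every crossed product result stated under a reversibility hypothesis applies unconditionally in the skew polynomial case.

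Next I would match the conditions. Conditions (1), (2), (3) of the present proposition are, after rewriting $\ssaa{g}{h}{b}$ as $\sigma^{g+h}(b)$ and reindexing $h \in \bbN$ as $q = g+h \ge g = p$, literal translations of conditions (1), (2), (3) of Proposition~\ref{S-prime module for R*G}. That proposition therefore yields $(1)\iff(2)\iff(3)$ without further work. Similarly, conditions (4) and (5) here are the $\bbN$-indexed versions of conditions (3) and (4) of Corollary~\ref{S-prime module for R*G second}, whose statement gives $(1)\implies(4)\iff(5)$ in complete generality.

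To obtain the reverse implication $(4) \implies (1)$ under the extra hypotheses, I would invoke Corollary~\ref{S-prime module R*G comm or surj}. The hypotheses of that corollary are (a) $\st{\ann M}{G}$ is \GGi, which here is exactly the assumption that $\st{\ann M}{\sigma}$ is \ssi; (b) \revsh, which as noted is automatic; and (c) either \psurj{} (meaning $\sigma$ is surjective) or $R$ commutative. Under these, the corollary asserts that the four conditions of Corollary~\ref{S-prime module for R*G second} are equivalent, closing the loop to give $(1) \iff (4) \iff (5)$ in combination with what we already have.

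The main obstacle is bookkeeping rather than mathematics: one must be careful that the quantifier ``for all $h \in G$'' in the crossed product conditions translates to ``for all $q \ge p$'' in condition~(2), rather than to ``for all $q \in \bbN$'', and that the iterates $\sigma^g \circ \sigma^h$ collapse to $\sigma^{g+h}$ because the cocycle $c$ is trivial (so no inner automorphism appears). Once these translations are verified, no new argument beyond the crossed product theorems is needed.
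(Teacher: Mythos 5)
Your proposal is correct and is essentially the paper's own proof: the paper likewise derives this proposition as a special case of Proposition~\ref{S-prime module for R*G} and Corollaries~\ref{S-prime module for R*G second} and~\ref{S-prime module R*G comm or surj}, with the same observations that $G=\bbN$ is commutative (so reversibility is automatic) and the cocycle is trivial. Your careful matching of the quantifier ``for all $h\in G$'' to ``for all $q\ge p$'' is exactly the right bookkeeping.
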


The next result follows from Corollary~\ref{S-prime module R*G acc}.

\begin{cor}\label{S-prime module poly acc}
Let $S=R[x;\sigma]$ where $\sigma$ is an automorphism of $R$ and let
$M$ be a nonzero $R$-module.  If $R$ \hasacc, then the following
conditions are equivalent.
\begin{enumerate}
\item
$M$ is $S$-prime.
\item
If $m\in M$ and $B$ is a \ssi{} ideal with $mB=0$, then $m=0$ or
$MB=0$.
\item
If $N$ is a nonzero submodule of $M$, then $\st{\ann
N}{\sigma}=\st{\ann M}{\sigma}$.
\qedsymbol
\end{enumerate}
\end{cor}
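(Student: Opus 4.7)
The plan is to deduce this from Corollary~\ref{S-prime module R*G acc} by specializing to the monoid $G = \bbN$. First I would note, as the introduction to this section already explains, that the skew polynomial ring $R[x;\sigma]$ is a crossed product $\skewm{R}{\bbN}$ with $\sigma(n) = \sigma^n$ and trivial cocycle $c \equiv 1$, so the machinery of Section~\ref{The crossed product case} applies directly. Since $\sigma$ is assumed to be a ring automorphism, every power $\sigma^n$ is surjective, which is exactly the condition that $\sigma$ is pointwise surjective in the sense of that section. Combined with the a.c.c.\ hypothesis on $R$, this places us squarely in the hypotheses of Corollary~\ref{S-prime module R*G acc}.

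Next I would match up the terminology. A \GGi{} ideal in the crossed product sense means $\sigma^{-n}(B) = B$ for every $n \in \bbN$; for $n = 1$ this is precisely that $B$ is \ssi{} in the sense defined at the start of this section, and the condition for higher $n$ then follows by iterating $\sigma^{-1}$. Similarly, the ideal $\st{\ann N}{\bbN} = \bigcap_{n \ge 0}\sigma^{-n}(\ann N)$ defined in Section~\ref{The crossed product case} agrees with $\st{\ann N}{\sigma}$ as defined here. Under these identifications, conditions (1), (2), and (3) of the present corollary correspond exactly to conditions (1), (3), and (4) of Corollary~\ref{S-prime module R*G acc}, whose equivalence has already been established. (Condition (2) of that corollary, which has no direct analog listed here, is not needed: the chain $(1)\implies(3)\implies(4)\implies(1)$ among the corresponding conditions closes up the equivalence.)

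There is essentially no obstacle: the content is all in Corollary~\ref{S-prime module R*G acc}, and the only work is the routine verification that the $\bbN$-action given by iterating the automorphism $\sigma$ satisfies every hypothesis of that corollary and that the notational conventions of the two sections translate as expected.
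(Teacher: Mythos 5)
Your proposal is correct and matches the paper's own proof, which simply cites Corollary~\ref{S-prime module R*G acc}; your identification of $R[x;\sigma]$ as the crossed product $\skewm{R}{\bbN}$ with pointwise surjective action, and of conditions (1), (2), (3) here with (1), (3), (4) there, is exactly the intended specialization.
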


\smallskip

The next corollary is the analog of Corollary~\ref{S-prime module from
prime module R*G} in the skew polynomial case; it relates primeness and
$S$-primeness. It will be used in some examples in
Section~\ref{Examples}.

\begin{cor}\label{S-prime module from prime module poly}
Let $M$ be a prime $R$-module with annihilator $I$ and let
$S=R[x;\sigma]$ where $\sigma$ is an endomorphism of $R$.  Suppose
that $k\in\bbN$ and either $R$ is commutative or $\sigma$ is surjective.
\begin{enumerate}
\item
$M$ is $S$-prime if and only if $\cap_{q\ge p}\,\sigma^{-q}(I)=
\st{I}{\sigma}$ for all $p\in\bbN$.
\item
$M_{\sigma^k}$ is $S$-prime if and only if $\cap_{q\ge
p}\,\sigma^{-q}(I)=\st{\sigma^{-k}(I)}{\sigma}$ for all $p\in\bbN$.
\end{enumerate}
\end{cor}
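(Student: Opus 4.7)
The plan is to deduce both parts from Corollary~\ref{S-prime module from prime module R*G} specialized to $G=\bbN$ with $\sigma(n)=\sigma^n$ and trivial cocycle; in particular, (2) will follow by applying (1) to the twisted module $M_{\sigma^k}$.

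For part~(1), I apply condition~(2) of Corollary~\ref{S-prime module from prime module R*G} to $M$: this asserts that $M$ is $S$-prime if and only if for every $b\in R$ and every $g\in\bbN$, the hypothesis $\sigma^{g+h}(b)\in I$ for all $h\in\bbN$ forces $b\in I$. Writing $T_p:=\bigcap_{q\ge p}\sigma^{-q}(I)$, this says precisely that $T_g\subseteq I$ for every $g\ge0$. A direct check gives the identity $\sigma^{-1}(T_p)=T_{p+1}$, from which each $T_p$ is seen to be \sss{}; since we also have $T_p\supseteq T_0=\st{I}{\sigma}$ automatically, $T_p\subseteq I$ is equivalent to $T_p\subseteq\st{I}{\sigma}$, and combining containments gives $T_p=\st{I}{\sigma}$, as claimed.

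For part~(2), I first verify that $M_{\sigma^k}$ is a prime $R$-module with annihilator $\sigma^{-k}(I)$: for any nonzero $m$, the twisted annihilator is $\{r\in R:m\sigma^k(r)=0\}=\sigma^{-k}(\ann_R m)$, and under the standing hypothesis that $R$ is commutative or $\sigma$ is surjective, primeness of $M$ forces $\ann_R m=I$ and makes this annihilator constant across all nonzero submodules, equal to $\sigma^{-k}(I)$. Applying part~(1) to $M_{\sigma^k}$ with annihilator $J=\sigma^{-k}(I)$ in place of $I$ now yields $\bigcap_{q\ge p}\sigma^{-q}(J)=\st{J}{\sigma}$ for all $p\in\bbN$; since $\sigma^{-q}(\sigma^{-k}(I))=\sigma^{-(q+k)}(I)$, reindexing the intersection produces the stated condition.

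The main obstacle is the reduction in~(2): one must carefully verify that the twisted action makes $M_{\sigma^k}$ into a prime $R$-module with the claimed annihilator, which is exactly where the commutativity-or-surjectivity hypothesis enters (without one of them, subsets closed under the twisted action need not even be $R$-submodules of $M$ in the original sense). Once that reduction is in place, the remaining work is bookkeeping on infinite intersections of preimages under powers of $\sigma$, and the conditions expressed via the operator $\st{\cdot}{\sigma}$ correspond cleanly to the largest-\sss-set-inside-$I$ interpretation used throughout the section.
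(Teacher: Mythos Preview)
Your argument for part~(1) is correct and matches the paper's: both reduce (via the $R*G$ or the skew-polynomial primeness criterion) to the containment $T_p\subseteq I$ for all $p$, and then use $\sigma$-stability of $T_p$ to upgrade this to $T_p=\st{I}{\sigma}$.

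For part~(2) your route is slightly different from the paper's. The paper simply repeats the analysis of~(1) with $M_{\sigma^k}$ in place of $M$, arriving directly at $T_p\subseteq\sigma^{-k}(I)$ for all $p$. You instead show $M_{\sigma^k}$ is prime with annihilator $\sigma^{-k}(I)$ and then invoke~(1). That reduction is a clean idea, but two points need attention.

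First, your verification that $M_{\sigma^k}$ is prime asserts that primeness of $M$ forces the \emph{element-wise} annihilator $\ann_R m$ to equal $I$ for every nonzero $m$. This is true when $R$ is commutative (since then $\ann_R m=\ann_R(mR)$), but it fails in the noncommutative surjective case: take $R=M_2(F)$, $M=R_R$, $m=E_{11}$, where $E_{21}\in\ann_R m$ but $\ann_R M=0$. The correct argument uses submodule annihilators: when $\sigma$ is surjective one has $m*R=mR$, whence $\ann_{M_{\sigma^k}}(m*R)=\sigma^{-k}(\ann_M(mR))=\sigma^{-k}(I)$, constant over all nonzero $m$.

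Second, your reindexing does not produce the literal stated condition. With $J=\sigma^{-k}(I)$ you get $\bigcap_{q\ge p}\sigma^{-q}(J)=T_{p+k}$, so part~(1) yields $T_{p'}=T_k$ for all $p'\ge k$, not for all $p'\in\bbN$. This is exactly what the paper's own proof yields (written there as $T_p\subseteq\sigma^{-k}(I)$), and it is the correct condition: the equality $T_p=T_k$ can fail for $p<k$, as Example~\ref{M prime but MS no ass primes example} with finite $n>0$ and $k=n$ shows. So the ``for all $p\in\bbN$'' in the statement should be read as ``for all $p\ge k$'' (equivalently, the equality as the containment $\subseteq$); your argument and the paper's agree on this corrected reading.
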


\begin{proof}
When $R$ is commutative or $\sigma$ is surjective, condition (2) in
Proposition~\ref{S-prime module poly} becomes: if $m\ne 0$ and
$mR\sigma^q(b)=0$ for all $q\ge p$, then $b\in I$. Since $M$ is prime,
$mR\sigma^q(b)=0$ if and only if $\sigma^q(b)\in I$. Thus $M$ is
$S$-prime if and only if $\cap_{q\ge p}\sigma^{-q}(I)\sub I$ for all $p$.
Likewise, $M_{\sigma^k}$ is $S$-prime if and only if $\cap_{q\ge
p}\sigma^{-q}(I)\sub\sigma^{-k}(I)$ for all $p$.
\end{proof}

\medskip

We now turn to the case $S=R[x^{\pm1};\sigma]$.  The following is a
special case of the corresponding result for strongly graded rings,
Proposition~\ref{S-prime module sg}.

\begin{prop}\label{S-prime module laurent}
Let $S=R[x^{\pm1};\sigma]$ where $\sigma$ is an automorphism of $R$
and let $M$ be an $R$-module.  Then the following conditions are
equivalent.
\begin{enumerate}
\item
$M$ is $S$-prime.
\item
If $m\in M$, $b\in R$, and $mR\sigma^p(b)=0$ for all $p\in\bbZ$,
then $m=0$ or $Mb=0$.
\item
If $m\in M$, $B$ is a \ssi{} ideal of $R$, and $mB=0$, then $m=0$ or
$MB=0$.
\item
For every nonzero submodule $N$ of $M$, we have $\st{\ann
N}{\sigma^{\pm}}=\st{\ann M}{\sigma^{\pm}}$.
\qedsymbol
\end{enumerate}
\end{prop}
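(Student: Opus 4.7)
The plan is to recognize Proposition~\ref{S-prime module laurent} as a direct specialization of Proposition~\ref{S-prime module sg}, applied to the strongly $\bbZ$-graded ring $S=R[x^{\pm1};\sigma]$ with identity component $R$. The only real work is to verify that the four conditions here translate exactly into the four conditions of Proposition~\ref{S-prime module sg} under the natural dictionary.

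First I would set up the dictionary: take $G=\bbZ$ with identity $0$, so $S_n=Rx^n$ and $S$ is strongly graded because $S_mS_n=Rx^mRx^n=R\sigma^m(R)x^{m+n}=Rx^{m+n}=S_{m+n}$. For an ideal $I\normal R$, the $G$-action coming from the strong grading is $\ssa{n}{I}=S_nIS_{-n}$, which (as already noted in the opening paragraph of this section) coincides with $\sigma^n(I)$. Consequently, \GGi{} ideals of $R$ are precisely the \ssi{} ideals, and $\st{I}{G}=\cap_{n\in\bbZ}\sigma^n(I)=\st{I}{\sigma^{\pm}}$. With this translation, condition~(1) is literal, condition~(3) is verbatim, and condition~(4) is verbatim.

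The only nontrivial translation is condition~(2). In Proposition~\ref{S-prime module sg}(2), the hypothesis is that $m(S_g b S_{g^{-1}})=0$ for all $g\in G$. I would compute
\[
S_n b S_{-n}=Rx^n\cdot b\cdot Rx^{-n}=R\sigma^n(b)x^nRx^{-n}=R\sigma^n(b)R,
\]
using $x^nb=\sigma^n(b)x^n$ and $x^nR=Rx^n$. Hence the hypothesis reads $mR\sigma^n(b)R=0$ for all $n\in\bbZ$. Because $1\in R$, this is equivalent to $mR\sigma^n(b)=0$ for all $n\in\bbZ$, which is precisely condition~(2) of the present proposition.

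With the dictionary verified, each of the four conditions here is identical to the corresponding condition in Proposition~\ref{S-prime module sg}, and that result provides the equivalences. There is no real obstacle; the only thing to be careful about is the small computation $S_nbS_{-n}=R\sigma^n(b)R$ and the harmless use of $1\in R$ to pass between $mR\sigma^n(b)$ and $mR\sigma^n(b)R$.
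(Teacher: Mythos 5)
Your proposal is correct and is exactly the paper's intended argument: the paper proves this proposition simply by declaring it a special case of Proposition~\ref{S-prime module sg} for the strongly $\bbZ$-graded ring $S=R[x^{\pm1};\sigma]$, and your dictionary (including the computation $S_nbS_{-n}=R\sigma^n(b)R$ and the reduction to $mR\sigma^n(b)=0$) is precisely the verification left to the reader.
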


\medskip

As in Section~\ref{The crossed product case}, for an $R$-module $N$,
$N_{\sigma^k}$ is  the module with $R$-action $n*r=n\sigma^k(r)$.  If
$L$ is a $\sigma^k(R)$-submodule of $M$ and we regard $Lx^k$ as a
subset of $M[x;\sigma]$, then it is an $R$-submodule and the map
$a\mapsto ax^k$ is an $R$-module isomorphism from $L_{\sigma^k}$
to $Lx^k$.

\smallskip

We are now ready to describe the associated prime ideals of
$M[x;\sigma]$.   (The skew laurent case is already covered in
Proposition~\ref{ass MS sg}.)  Proposition~\ref{prime submods of homog
mods for R*G} tells us that if $M$ is an $R$-module and $N$ is a prime
submodule of $M[x;\sigma]$, then there is a $k\in\bbN$ and a
$\sigma^k(R)$-submodule $L$ of $M$ such that $L_{\sigma^k}$ is
$S$-prime and such that if we set $J=\sigma^{-k}(\ann L)$ and
$I=\st{J}{\sigma}$, then $I$ is $S$-prime and $\ann N=I[x;\sigma]$.
This, together with Lemma~\ref{twisted S-prime implies S-prime for
R*G}, implies the next result, which is a special case of
Proposition~\ref{ass MS poly for R*G}. Part (2) of the proposition is
Theorem~1.2 in Nordstrom \cite{Nor05} and part (3) is Corollary~1.5 in
that paper.

\begin{prop}\label{ass MS poly}
Let $S=R[x;\sigma]$ where $\sigma$ is an endomorphism of $R$ and let
$M$ be an $R$-module.
\begin{enumerate}
\item
$\Ass M[x;\sigma]=
\\
\{\,\st{\sigma^{-k}(J)}{\sigma}[x;\sigma]\mid k\in\bbN\text{ and
}J\text{ is the annihilator of an $S$-prime submodule of
}M_{\sigma^k}\,\}$.
\item
If $\sigma$ is surjective, then $\Ass M[x;\sigma]=
\\
\{\,\st{J}{\sigma}[x;\sigma]\mid
J\text{ is the annihilator of an $S$-prime submodule of }M\,\}$.
\item
If $R$ \hasacc{} and $\sigma$ is an automorphism, then $\Ass
M[x;\sigma]=
\\
\{\,\st{P}{\sigma}[x;\sigma]\mid P\in\Ass M\,\}$.
\qedsymbol
\end{enumerate}
\end{prop}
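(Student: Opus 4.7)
The plan is to derive all three parts directly from Proposition~\ref{ass MS poly for R*G}, using the fact (noted at the start of Section~\ref{The skew polynomial and skew laurent cases}) that $S=R[x;\sigma]$ is the crossed product $\skewm{R}{\bbN}$ with $\sigma(g)=\sigma^g$ and trivial cocycle $c\equiv 1$. Since $\bbN$ is an ordered monoid it is a \upm, so the general theory applies, and since $\bbN$ is commutative we have $gG=Gg$ for all $g\in G$, so the reversibility condition is automatically satisfied. Also, $\sigma(g)=\sigma^g$ is surjective for every $g\in\bbN$ if and only if $\sigma$ itself is surjective, so the hypothesis \psurj{} translates to ``$\sigma$ is surjective'' (and hence is automatic when $\sigma$ is an automorphism).

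For part~(1), I would simply transcribe Proposition~\ref{ass MS poly for R*G}(1) into skew polynomial notation: elements $g\in G$ become exponents $k\in\bbN$, $\sigma(g)$ becomes $\sigma^k$, and $\skewm{\cdot}{G}$ becomes $\cdot[x;\sigma]$. The twist $M_{\sigma(g)}$ is precisely $M_{\sigma^k}$ by definition.

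For part~(2), I would invoke Proposition~\ref{ass MS poly for R*G}(2). The two hypotheses needed there are \psurj{} and \revsh{}; the first is exactly surjectivity of $\sigma$, and the second is automatic because $\bbN$ is commutative. The conclusion becomes the stated description in which no twist appears. For part~(3), I would apply Proposition~\ref{ass MS poly for R*G}(3): since $\sigma$ is an automorphism, \psurj{} holds, and $R$ has a.c.c.\ by hypothesis, so we may identify the $S$-prime submodules appearing in~(2) with the classical prime submodules of $M$ via the maximal-annihilator argument already carried out in the proof of that proposition.

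There is no real obstacle here beyond bookkeeping: all of the hard work — handling twists through the Lemma~\ref{twisted S-prime implies S-prime for R*G} step, reducing to prime submodules under a.c.c., and tracking the stabilizer $\st{\cdot}{G}$ through the various reductions — has already been carried out in the general crossed product setting. The only verification is that the specializations of the hypotheses (``\upm'' for $\bbN$, ``\revsh'' from commutativity, ``\psurj'' from surjectivity of $\sigma$) all hold in the indicated cases and that the notation $\st{\cdot}{\sigma}$ introduced in this section agrees with $\st{\cdot}{G}$ under $G=\bbN$, $\sigma(g)=\sigma^g$, which is immediate from the definitions.
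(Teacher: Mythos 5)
Your proposal is correct and matches the paper's own treatment: the paper states this result without a separate proof, noting just before the proposition that it is a special case of Proposition~\ref{ass MS poly for R*G} (together with Proposition~\ref{prime submods of homog mods for R*G} and Lemma~\ref{twisted S-prime implies S-prime for R*G}), with exactly the specializations you identify ($\bbN$ a \upm, reversibility from commutativity of $\bbN$, \psurj{} equivalent to surjectivity of $\sigma$). Nothing further is needed.
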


\medskip

In the general case of Proposition~\ref{ass MS poly}, we may have to
check all submodules of each twist $M_{\sigma^k}$.  We can reduce
this work a little bit in some cases.

\begin{cor}\label{prime implies twisted prime poly}
Let $S=R[x;\sigma]$ where $\sigma$ is an endomorphism of $R$ and
let $M$ be an $R$-module such that $M_{\sigma^n}$ is $S$-prime for
some $n\in\bbN$.  Then there is a nonnegative integer $k$ such that
$M_{\sigma^n}$ is $S$-prime if and only if $n\ge k$.  In this case,
$\Ass M[x;\sigma]=
\{\,\st{\sigma^{-k}(\ann M)}{\sigma}[x;\sigma]\,\}$.
\end{cor}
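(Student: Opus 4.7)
My plan is as follows. The statement comprises two claims—the existence of the threshold $k$ and the description of $\Ass M[x;\sigma]$ as a singleton—which I address in turn by combining the ``upward propagation'' Lemma~\ref{prime implies twisted prime for R*G} with the structural description of prime submodules of an induced module in Proposition~\ref{prime submods of homog mods for R*G}.

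For the first claim, Lemma~\ref{prime implies twisted prime for R*G} (in the case $G=\bbN$) gives: if $M_{\sigma^n}$ is $S$-prime then so is $M_{\sigma^{n+h}}=(M_{\sigma^n})_{\sigma^h}$ for every $h\ge 0$. Hence $T:=\{\,n\in\bbN\mid M_{\sigma^n}\text{ is $S$-prime}\,\}$ is upward-closed, and it is nonempty by hypothesis, so $k:=\min T$ has the stated property. For the second claim, Proposition~\ref{prime submods of homog mods for R*G} tells me that every prime $S$-submodule $N$ of $M[x;\sigma]$ arises from a pair $(i,L)$, where $L\subseteq M$ is a $\sigma^i(R)$-submodule with $L_{\sigma^i}$ being $S$-prime, and
\[
\ann N=\st{\sigma^{-i}(\ann_R L)}{\sigma}[x;\sigma],
\]
which is also the annihilator of the submodule $L_{\sigma^i}[x;\sigma]\subseteq M[x;\sigma]$. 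Taking $(i,L)=(k,M)$ exhibits $P_0:=\st{\sigma^{-k}(\ann M)}{\sigma}[x;\sigma]$ as an associated prime; the task is to show $P_0$ is the only element of $\Ass M[x;\sigma]$.

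To that end, I will take an arbitrary valid $(i,L)$, pick any integer $m\ge\max(i,k)$, and use Lemma~\ref{prime implies twisted prime for R*G} again to obtain that $L_{\sigma^m}$ is $S$-prime. Using the identification $L_{\sigma^j}[x;\sigma]\cong\bigoplus_{\ell\ge 0}Lx^{j+\ell}$ as an $S$-submodule of $M[x;\sigma]$, I get the chain of nonzero $S$-submodule inclusions
\[
L_{\sigma^m}[x;\sigma]\,\subseteq\,L_{\sigma^i}[x;\sigma],\qquad L_{\sigma^m}[x;\sigma]\,\subseteq\,M_{\sigma^m}[x;\sigma]\,\subseteq\,M_{\sigma^k}[x;\sigma],
\]
in which each of the three ambient modules $L_{\sigma^i}[x;\sigma]$, $M_{\sigma^m}[x;\sigma]$, $M_{\sigma^k}[x;\sigma]$ is a prime $S$-module (by the $S$-primeness of $L_{\sigma^i}$, $M_{\sigma^m}$, and $M_{\sigma^k}$, respectively). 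Applying the defining property of a prime module (the annihilator of any nonzero submodule equals the annihilator of the whole) three times then yields
\[
\ann N\;=\;\ann L_{\sigma^i}[x;\sigma]\;=\;\ann L_{\sigma^m}[x;\sigma]\;=\;\ann M_{\sigma^m}[x;\sigma]\;=\;\ann M_{\sigma^k}[x;\sigma]\;=\;P_0.
\]
The main obstacle will be purely bookkeeping: choosing $m$ large enough for both the $L$-chain and the $M$-chain to be valid, and checking that every subinclusion is nonzero so that the prime-module defining property applies. The substantive input is the simultaneous primeness of all three ambient modules, which is exactly what the threshold hypothesis on $M$ and the assumed $S$-primeness of $L_{\sigma^i}$ provide.
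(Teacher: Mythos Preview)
Your proof is correct and follows essentially the same approach as the paper, which simply cites Lemma~\ref{prime implies twisted prime for R*G} and Proposition~\ref{ass MS poly} and leaves the details to the reader. You cite Proposition~\ref{prime submods of homog mods for R*G} rather than its packaged consequence Proposition~\ref{ass MS poly}, but this is the same content; your chain-of-annihilators argument via the common nonzero submodule $L_{\sigma^m}[x;\sigma]$ is exactly the kind of detail the paper's one-line proof is suppressing.
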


\begin{proof}
This follows from Lemma~\ref {prime implies twisted prime for R*G}
and Proposition~\ref{ass MS poly}.
\end{proof}

\section{Examples}\label{Examples}

In this section we give several examples, organized into subsections. In the
first subsection, Example~\ref{affine example} discusses the case where
$R$ is the coordinate ring of an affine algebraic set and $M$ is the simple
module corresponding to a point. In the second subsection, we give several
examples where associated primes come from submodules of twists of
$M$.  Example~\ref{prime doesn't imply S-prime} is of a prime ring $R$
that is not $S$-prime when $S=R[x;\sigma]$; it also provides an example
showing that conditions (4) and (5) in Proposition~\ref{S-prime module
poly} and  conditions (2) and (3) in Corollary~\ref{S-prime module for
R*G} are not always equivalent to $S$-primeness. Examples~\ref{M
prime but MS no ass primes example} and~\ref{MS prime but M no ass
primes example} show that one of $M$, $M[x;\sigma]$ may have
associated prime ideals while the other does not.  In the third subsection,
we consider \revsh{}.  Examples~\ref{reversibility necessary for
invariance simpler} and~\ref{reversibility necessary for invariance auto}
show that without \revsh, an $\skewm{R}{G}$-prime ideal need not be
\GGi.  Example~\ref{reversibility necessary for S-prime comm} shows that the
weaker conditions for $S$-primeness in the commutative case generally
fail without \revsh.

\bigskip

\bigskip

\textbf{An example from algebraic geometry}

\medskip

Our first example describes the case where $R=\cO(X)$ is the coordinate
ring of an affine algebraic set, $\sigma$ corresponds to a regular map from
$X$ to itself, and $M$ is the simple $R$-module corresponding to a point
$a\in X$.  The induced module $M[x;\sigma]$ always has a unique
associated prime, and it is always the annihilator of some twist
$M_{\sigma^k}$.  The $S$-primeness or twisted $S$-primeness of $M$,
as well as $\Ass M[x;\sigma]$ are controlled by the sequence
$\{\phi^n(a)\}_{n\in\bbN}$. If the sequence repeats, the starting point
of the repeating segment is the $k$ above and the length of the repeating
part determines the annihilator.  If the sequence never repeats, the value
of $k$ and the annihilator are determined by the Zariski closures of the
tails of the sequence. We close with a discussion of the special case
$R=F[t]$, $\sigma(t)=t^p$.

\begin{ex}\label{affine example}
Let $F$ be an algebraically closed field, let $X$ be an affine algebraic set
over $F$, let $R=\cO(X)$ be the coordinate ring of $X$, and let
$\phi:X\to X$ be a regular map.  There is an endomorphism $\sigma$ of
the $F$-algebra $R$ corresponding to $\phi$.  The maximal ideals of $R$
have the form $\fkm_a$ for points $a\in X$, and
$\sigma^{-1}(\fkm_a)=\fkm_{\phi(a)}$.

Let $a\in X$ and set $M=R/\fkm_a$.  Define a sequence of points of $X$
by $a_0=a$ and $a_n=\phi^n(a)$.  The claims below follow from
Corollary~\ref{S-prime module from prime module poly}

If the sequence $a_0,a_1,\dots$ is finite, then it eventually becomes
periodic of period $l$, starting at some $a_k$, so it has the form
$a_0,\dots,a_k,a_{k+1},\dots,a_{k+l-1},a_k,\dots$.  Let $J$ be the
intersection of the ideals $\fkm_{a_k},\dots,\fkm_{a_{k+l-1}}$. Then
$M_{\sigma^k}$ is $S$-prime, and $\ann
M[x;\sigma]x^k=J[x;\sigma]$.  By Corollary~\ref{prime implies twisted
prime poly}, $\Ass M[x;\sigma]=\{J[x;\sigma]\}$.

If the sequence is infinite, let $Y_n$ be the Zariski-closure of
$\{\,a_n,a_{n+1},\dots\,\}$ for each $n\in\bbN$ and let
$Y=\cap_{n=0}^\infty Y_n$.  Let $J_n$ be the ideal determined by the
algebraic set $Y_n$, that is, $J_n=\cap_{q\ge n}\fkm_{a_q}$. Then
$\ann M[x;\sigma]x^n=J_n[x;\sigma]$ for all $n\in\bbN$.

The module $M$ is $S$-prime if and only if $J_n=J_0$ for all $n$.  This is
true if and only if the original sequence is contained in $Y$. Likewise,
$M_{\sigma^n}$ is $S$-prime if and only if $a_n,a_{n+1},\dots\in Y$.
Since $R$ is noetherian, there is an $n$ such that $Y=Y_n$.  Let $k$ be
the smallest such integer $n$.  If $k=0$, then $M$ is $S$-prime.  If
$k>0$, then $M$ is not $S$-prime, but $M_{\sigma^k}$ is $S$-prime.
By Corollary~\ref{prime implies twisted prime poly}, $\Ass
M[x;\sigma]=\{J_k[x;\sigma]\}$.

\bigskip

For a concrete example, let $p$ be a prime number, $X=\bbA^1$, and
$\phi(x)=x^p$ for $x\in X$.  Then $R$ is the polynomial ring $F[t]$ with
endomorphism $\sigma(t)=t^p$ and $S=R[x;\sigma]$.  Let $a\in F$ and
set $M=R/(t-a)$.

Four distinct cases occur, unless $p$ divides the characteristic of $F$,
when case (c) below cannot occur. We introduce the following notation for
use in cases (b) and (c). Let $r$ be a positive integer, $\omega$ a
primitive $r\Th$ root of unity, and $p$ a prime that does not divide $r$.
Let $\ell$ be the order of $p$ as an element of $GL_1(\bbZ_r)$.  We
define $f_{\omega,p}\in F[t]$ by $f_{\omega,p}=\prod_{i=0}^{\ell -1}
(t-\omega^{p^i})$.  If $p$ is a primitive root modulo $r$, then
$f_{\omega,p}$ is the $r\Th$ cyclotomic polynomial.

In each case, $\Ass M[x;\sigma]$ is a singleton, containing the annihilator
ideal we describe.

\begin{enumerate}[(a)]
\item
Suppose $a=0$.  The sequence determined by $a,\phi$ is $0,0,\dots$.
Thus $M$ is $S$-prime with $\ann M[x;\sigma]=tS$.
\item
Suppose $a$ is a primitive $r\Th$ root of unity where $r$ is not
divisible by $p$, and let $\ell$ be the order of $p$ as an element of
$GL_1(\bbZ_r)$.  The sequence determined by $a,\phi$ is
$a,a^p,a^{p^2},\dots,a^{p^{\ell-1}},a,\dots$.  Thus $M$ is $S$-prime
with $\ann M[x;\sigma]=f_{a,p}S$, where $f_{a,p}$ is defined above.
\item
Suppose $a$ is a primitive $p^kr\Th$ root of unity where $k\ge1$ and
$r$ is not divisible by $p$, and let $\ell$ be the order of $p$ as an
element of $GL_1(\bbZ_r)$.  Let $b=a^{p^k}$, so $b$ is a primitive
$r\Th$ root of unity.  The sequence determined by $a,\phi$ is
$a,a^p,a^{p^2},\dots,a^{p^{k-1}},
b,b^p,b^{p^2},\dots,b^{p^{\ell-1}},b,\dots$. Thus  $M$ is not
$S$-prime but $M_{\sigma^k}$ is $S$-prime with $\ann
M[x;\sigma]x^k=f_{b,p}S$, where $f_{b,p}$ is defined above.
\item
Suppose $a\ne0$ is not a root of unity.  The sequence determined by
$a,\phi$ is $a,a^p,a^{p^2},\dots$, which has no repetition.  Thus every
subsequence is infinite and hence Zariski-dense. In this case, $M$ is
$S$-prime with $\ann M[x;\sigma]=0$.
\end{enumerate}
\end{ex}

\bigskip

\textbf{Twisted submodules and the lack of a relationship between prime
and $S$-prime}

\medskip

Our first example shows that even a prime ideal need not be $S$-prime in
the skew polynomial case.  It includes an instance of an $S$-prime
submodule of $M_{\sigma}$ that is not an $R$-submodule. The example
also shows that an $R$-module $M$ can satisfy conditions (4) and (5) of
Proposition~\ref{S-prime module poly} or conditions (2) and (3) of
Corollary~\ref{S-prime module R*G comm or surj} without being
$S$-prime.

\begin{ex}\label{prime doesn't imply S-prime}
Let $F$ be a field and let $R$ be the $F$-algebra generated by variables
$s_0,s_1,\dots$, $t_0,t_1,\dots$ subject to the relations $s_is_j=0$ if
$i\le j$ and $s_it_j=0$ if $i<j$.  (We can also allow $t_it_j=t_jt_i$ for all
$i,j$ without affecting any of the results below.)

The ring $R$ is prime.  To see this, let $m_1,m_2$ be (nonzero)
monomials with $a$ the final letter of $m_1$ and $b$ the initial letter of
$m_2$, and suppose $m_1Rm_2=0$.  In particular $m_1t_0m_2=0$.
Since the relations of $R$ are of degree $2$, this implies that either
$at_0=0$ or $t_0b=0$.  But $t_0$ does not annihilate any generator on
either side, so this is impossible.

Now define $\sigma:R\to R$ by $\sigma(s_i)=s_{i+1}$ and
$\sigma(t_i)=t_{i+1}$ for all $i$.  This map preserves the relations, so it
yields a well-defined ring endomorphism.  It is easy to see that $\sigma$ is
injective.  We will show $R$ is not $S$-prime for $S=R[x;\sigma]$.  As a
matter of fact, $S$ is not even semiprime.

To see this, note that $s_0\sigma(R)=Fs_0$, so
$s_0\sigma(R)\sigma^q(s_0)=0$ for all $q\ge1$.  This shows $R$ is not
$S$-prime, and one easily sees that $(Ss_0xS)^2=0$.

\smallskip

Note that $L=Fs_0$ is not a right ideal, but $L$ is a
$\sigma(R)$-submodule of $R$. The $R$-module $L_{\sigma}$ is
$S$-prime, since $l*\sigma^p(R)=l\sigma^{p+1}(R)=L$ for all nonzero
$l\in L$ and all $p\ge 0$.  If $I$ is the ideal generated by all the $s$'s and
$t$'s, then $\ann L_{\sigma}=I$. Thus even though $R_R$ contains no
$S$-prime submodules, $S=R[x;\sigma]$ contains the prime submodule
$s_0xS$, and hence $S_S$ has $I[x;\sigma]$ as an associated ideal.

\smallskip

Since $R$ is prime and $\sigma$ is injective, and hence $\st{\ann
R_R}{\sigma}=0$ is \ssi, condition (4) of Proposition~\ref{S-prime ideal
poly} and conditions (4) and (5) of Proposition~\ref{S-prime module poly}
are certainly satisfied with $M=R_R$, as are conditions (2) and (3) of
Corollary~\ref{S-prime module R*G comm or surj}.  This shows that the
conditions in Propositions~\ref{S-prime ideal poly} and~\ref{S-prime
module poly} and Corollary~\ref{S-prime module R*G comm or surj} are
not equivalent in general.
\end{ex}

\bigskip

The next example, in the case $n=\infty$, gives an $R$-module $M$ that
is prime but has no $S$-prime submodules, with $S=R[x;\sigma]$,  where
$R$ commutative and $\sigma$ an automorphism.  When $n>0$ is finite,
the twist $M_{\sigma^n}$ is $S$-prime even though $M$ is not
$S$-prime. When $n=\infty$, no submodule of any twist of $M$ is
$S$-prime and $M[x;\sigma]$ has no associated primes. The case where
$n=\infty$ is Example 5.15 in Leroy-Matczuk \cite{LM04} and essentially
the same as Example~2.2 in Nordstrom \cite{Nor05}.

\begin{ex}\label{M prime but MS no ass primes example}
Let $F$ be a field and set $R_n=F[t_{-n},\dots,t_0,t_1,\dots]$ for
$n\in\bbN$ and $R_{\infty}=F[\dots,t_{-1},t_0,t_1,\dots]$. Define an
endomorphism $\sigma$ of $R_n$ (including $n=\infty$) by
$\sigma(t_i)=t_{i+1}$ for each $i$ and let $I$ be the ideal generated by
$t_0,t_1,\dots$.  Then $\sigma^{-j}(I)$ is the ideal generated by
$t_{-j},t_{-(j-1)},\dots$ if $j\le n$.  For $j\ge n$, we have
$\sigma^{-j}(I)=\sigma^{-n}(I)$.  Let $S=R_n[x;\sigma]$.

Set $M=R_n/I$ and note that $M$ is a prime $R_n$-module.  If $n$ is
finite, then $\cap_{q\ge p}\,\sigma^{-q}(I)=\sigma^{-n}(I)$ whenever
$p\ge n$. Thus $M$ is $S$-prime if $n=0$ by Corollary~\ref{S-prime
module from prime module poly}(1).  If $n>0$, then $M$ is not
$S$-prime, but $M_{\sigma^n}$ is $S$-prime, by
Corollary~\ref{S-prime module from prime module poly}(2).

If $n=\infty$, then $\sigma$ is an automorphism and the ideals
$\sigma^{-q}(I)$ strictly increase as $q$ increases. In particular,
$\cap_{q\ge p}\,\sigma^{-q}(I)=\sigma^{-p}(I)$ for all $p$, and there is
no $k$ with $\sigma^{-p}(I)\sub \sigma^{-k}(I)$ for all $p$.  Thus by
Corollary~\ref{S-prime module from prime module poly}, no
$M_{\sigma^k}$ can be $S$-prime. Since $M$ is prime and the
conditions of Corollary~\ref{S-prime module from prime module poly}
depend only on the annihilator of $M$, the same considerations apply to all
nonzero submodules.  Thus by Proposition~\ref{ass MS poly},
$M[x;\sigma]$ has no associated primes.

Since $M$ is prime, $I=\st{\ann M}{\sigma}=\st{\ann N}{\sigma}$ for
all nonzero submodules $N$ of $M$.  Also, if $mJ=0$ for a nonzero $m\in
M$ and an ideal $J$ of $R$, we must have $MJ=0$. However, if $n>0$,
the ideal $I$ is not \ssi{} and $M$ is not $S$-prime.  This show that the
hypotheses ``$\st{\ann M}{G}$ is \GGi'' and ``$\st{\ann M}{\sigma}$ is
\ssi'' are necessary in Corollary~\ref{S-prime module R*G comm or surj},
parts~(2) and~(3) and in Proposition~\ref{S-prime module poly}.
\end{ex}

The next example gives an $R$-module $M$ that is $S$-prime but
contains no prime submodules, with $S=R[x;\sigma]$, $R$ commutative
and $\sigma$ an automorphism.   This is essentially Example~2.3 in
Nordstrom \cite{Nor05}; we include it for completeness.

\begin{ex}\label{MS prime but M no ass primes example}
Let $F$ be a field and set
$R=F[\dots,t_{-1},t_0,t_1,\dots]/(\dots,t_{-1}^2,t_0^2,t_1^2,\dots)$.
We will write $t_i$ and not $\overline{t_i}$ for elements of $R$. Define
an automorphism $\sigma$ of $R$ by $\sigma(t_i)=t_{i+1}$ for each $i$
and let $S=R[x;\sigma]$.

First we note that the module $R_R$ contains no prime submodules and
so $\Ass R_R=\emptyset$. To see this, suppose $f\in R$ and suppose the
variable $t_n$ does not occur in $f$.  Then $t_n\notin\ann f$, but
$(ft_n)t_n=0$. Thus $\ann f\subsetneq \ann ft_n$.

On the other hand, $R_R$ is $S$-prime, i.e., $S$ is prime and $\Ass
S_S=\{0\}$. To see this, first note that if $f,g\in R$ are nonzero and have
no variables in common, then $fg\ne0$.  Given any nonzero $f,g$, there is
a $p$ such that $f,\sigma^q(g)$ have no variables in common for all $q\ge
p$.  Thus $f\sigma^q(g)\ne0$ for all $q\ge p$.  This proves $R_R$ is
$S$-prime by Proposition~\ref{S-prime module poly}.
\end{ex}

\bigskip

\textbf{\Revsh}

\medskip

The next examples concern \revsh{} that we introduced prior to
Lemma~\ref{s-good conditions for R*G}. The first two examples show
that Lemma~\ref{S-prime implies s-good for R*G} need not hold when
$G$ is the free monoid on two generators and $S=\skewm{R}{G}$, even if
the base ring $R$ is commutative. The first gives an example where $R$ is
noetherian; the second gives an example where each $\sigma(g)$ is an
automorphism.  Example~\ref{reversibility necessary for S-prime comm}
gives an example where $I$ and $M=S/I$ are not $S$-prime despite the
fact that $R$ is commutative and the conditions of
Corollaries~\ref{S-prime ideal R*G comm or surj} and~\ref{S-prime
module R*G comm or surj} hold, except for \revsh.

\smallskip

\begin{ex}\label{reversibility necessary for invariance simpler}
Let $G$ be the free monoid on $v,w$, let $F$ be a field of characteristic
$0$, and let $R=F[t]$.  Define $\sigma:G\to\End_{F-\text{Alg}}(R)$ by
$\ssa{v}{b(t)}=b(t+1)$ and $\ssa{w}{b(t)}=b(0)$ for $b(t)\in F[t]$.
Since every $\sigma(g)$ is the identity on constants, it is easy to see that
for all $j\in\bbN$ and $g\in G$, we have $\ssaa{g}{wv^j}{b}=b(j)$.

Thus if $\ssaa{g}{h}{b}=0$ for all $h\in G$, then $b$ has infinitely many
roots and hence is $0$.  By Corollary~\ref{S-prime module from prime
module R*G}, this implies the ideal $0$ of $R$ is $S$-prime.  However,
$0$ is not \GGi, since $\sigma(w)^{-1}(0)=tR$.
\end{ex}

\begin{ex}\label{reversibility necessary for invariance auto}
Let $G$ be the free monoid on $v,w$, let $F$ be a field, let
$R=F[\dots,t_{-1},t_0,t_1,\dots]$, and let $I$ be the ideal of $R$
generated by $t_0,t_1,\dots$.

We begin by defining permutations $\alpha,\beta$ of $\bbZ$ by
$\alpha(n)=n+2$ and
\[
\beta(n)=
\begin{cases}n&\text{ if $n\ge0$;}
\\
n-2&\text{ if $n\le-1$ and $n$ is odd;}
\\
-1&\text{ if $n=-2$;}
\\
n+2&\text{ if $n\le-4$ and $n$ is even;}
\end{cases}
\]
In cycle form (with infinite cycles),
$\alpha=(\dots\,{-3}\,{-1}\,1\,3\,\dots)(\dots\,{-2}\,0\,2\,\dots)$ and
$\beta=(\dots\,{-4}\,{-2}\,{-1}\,{-3}\,\dots)$.

Define $\sigma:G\to\Aut_{F-\text{Alg}}(R)$ by
$\ssaa{v}{}{t_n}=t_{\alpha(n)}$ and $\ssaa{w}{}{t_n}=t_{\beta(n)}$.

Clearly $I$ is \GGs{} but not \GGi, since $\ssa{v}{I}\subsetneq I$.  In
addition, $I$ is a prime ideal.  We will show $I$ is $S$-prime.  By
Corollary~\ref{S-prime module from prime module R*G}, it is enough to
show that for any $r\in R, g\in G$, if $\ssaa{g}{h}{r}\in I$ for all $h\in
G$, then $r\in I$.  Because $I$ is a monomial ideal and each $\sigma(g)$
takes distinct monomials to distinct monomials, it is enough to show this in
the case where $r$ is a monomial.

Thus we will assume that $g\in G, r\in R\smin I$, that $r$ is a product of
indeterminates $t_n$ with each $n<0$, and that $\ssaa{g}{h}{r}\in I$ for
every $h\in G$.  We will show below that for any negative integers $a,n$,
there are integers $m,b$ such that $m$ is positive, $b$ is odd,  $b<a$, and
$\ssaa{w^m}{}{t_n}=t_b$. Suppose we have proven this claim and
suppose $g$ has $k$ occurrences of $v$. For each $t_n$ that occurs in
$r$, let $m(n)$ be a positive integer as above with
$\ssaa{w^{m(n)}}{}{t_n}=t_{b(n)}$ where $b(n)<-2k$ is odd. Suppose
$c<0$ is odd.  If we apply $\sigma(w)$ to $t_c$, we get $t_{c-2}$. If we
apply $\sigma(v)$ to $t_c$, we get $t_{c+2}$. In either case, the
subscript is odd and no more than two larger than $c$. Thus when we
apply $\sigma(g)$ to $t_{b(n)}$, the result is $t_{d(n)}$ where $d(n)<0$
is odd. Hence if we let $m$ be the maximum of the $m(n)$, we see that
$\ssaa{g}{w^m}{r}$ is a monomial all of whose indeterminate factors
have negative subscripts. Thus $\ssaa{g}{w^m}{r}\notin I$, which
contradicts our choice of $r$.

To finish the proof, we need to verify the claim.  That is, we need to show
that if $n,a<0$, there exist a positive $m$ and an odd $b<a$ such that
$\beta^m(n)=b$.  If $n$ is odd, we may choose $m=\abs{a}$ and
$b=n-2m$. If $n=2k$ is even, we may choose $m=\abs{k}+\abs{a}$ and
$b=-1-2\abs{a}$.
\end{ex}

\medskip

The next example shows that Corollaries~\ref{S-prime ideal R*G comm
or surj} and~\ref{S-prime module R*G comm or surj} fail in the
commutative case if \revsh{{} does not hold. We construct an example
where $G$ is the free monoid on two generators, acting on a commutative
base ring $R$, and $I$ is a \GGi{} ideal of $R$ that is not $S$-prime, but
is such that whenever $A,B\normal R$, and $B$ is \GGs, if $AB\sub I$,
then either $A\sub I$ or $B\sub I$.

Note that Corollary~\ref{S-prime ideal R*G comm or surj} holds in the
case where \psurj{} without \revsh{} hypothesis.  Unfortunately, we do
not presently have an example of the necessity of \revsh{} in the ``\psurj''
case of Corollary~\ref{S-prime module R*G comm or surj}.  Nor do we
have an example that Corollaries~\ref{S-prime ideal R*G comm or surj}
and Corollary~\ref{S-prime module R*G comm or surj} fail when $R$ is
commutative noetherian.

\begin{ex}\label{reversibility necessary for S-prime comm}
Let $G$ be the free monoid on $v,w$, let $F$ be a field, let
$R=F[t_0,t_1,\dots]$, and let $I$ be the ideal of $R$ generated by all
$t_it_j$ with $i\ne j$.  (We could also allow all $t_j^2\in I$ without
affecting the result.)

We begin by defining almost permutations $\alpha,\beta$ of $\bbN$ by
$\alpha(n)=n+1$ and
\[
\beta(n)=
\begin{cases}1&\text{ if $n=0$;}
\\
n+2&\text{ if $n\ge 1$ and $n$ is odd;}
\\
n-2&\text{ if $n\ge 2$ and $n$ is even.}
\end{cases}
\]
In ``cycle form'' $\alpha=(0\,1\,2\,\dots)$ and
$\beta=(\dots\,{4}\,{2}\,0\,1\,3\,\dots)$.  Of course $\alpha$ is not a
permutation or true cycle; it is injective but not surjective.  The map
$\beta$ is a bijection.

Define $\sigma:G\to\Aut_{F-\text{Alg}}(R)$ by
$\ssaa{v}{}{t_n}=t_{\alpha(n)}$ and $\ssaa{w}{}{t_n}=t_{\beta(n)}$.

We will first show that $I$ is \GGi.  It is obvious that $I$ is \GGs. Suppose
$r$ is a nontrivial monomial and $\ssa{g}{r}\in I$ for some $g\in G$.
Clearly we cannot have $r=t_n^k$.  However, if $r$ is any other nontrivial
monomial, then $r$ must have at least two distinct indeterminate factors,
and thus $r\in I$.  This proves $I$ is \GGi.

Set $g=v$.  Then $\alpha(n)>0$ for all $n\in\bbN$, so $\ssa{g}{r}\in I$
for all indeterminates $r$.  If we set $r=t_0$, we have $\ssaa{g}{h}{r}\in
I$ for all $h\in G$.  Since $r\notin I$, Corollary~\ref{S-prime module
from prime module R*G} shows $I$ is not $S$-prime.
\end{ex}

\bigskip

\begin{bibdiv}
\begin{biblist}

\bib{Car13}{article}{
    author={Carter, William},
     title={New examples of torsion-free non-unique product groups},
    journal={Journal of Group Theory}
          date={2014},
    number={17},
     pages={445\ndash 464},
      issn={1435-4446}, review={\MR{3200369}},
}

\bib{Dad80}{article}{
    author={Dade, Everett C.},
     title={Group-graded rings and modules},
   journal={Math. Z.},
    volume={174},
      date={1980},
    number={3},
     pages={241\ndash 262},
      issn={0025-5874}, review={\MR{593823 (82c:16028)}},
}

\bib{Eis95}{book}{
    author={Eisenbud, David},
     title={Commutative algebra with a view toward algebraic geometry},
    series={Graduate Texts in Mathematics},
    volume={150},
 publisher={Springer-Verlag},
     place={New York},
      date={1995},
     pages={xvi+785},
      isbn={0-387-94268-8},
      isbn={0-387-94269-6}, review={\MR{1322960 (97a:13001)}},
}

\bib{GM74}{article}{
    author={Goldie, Alfred},
    author={Michler, Gerhard},
     title={Ore extensions and polycyclic group rings},
   journal={J. London Math. Soc. (2)},
    volume={9},
      date={1974/75},
     pages={337\ndash 345}, review={\MR{0357500 (50 \#9968)}},
}

\bib{Irv79}{article}{
    author={Irving, Ronald S.},
     title={Prime ideals of Ore extensions over commutative rings},
   journal={J. Algebra},
    volume={56},
      date={1979},
    number={2}, pages={315\ndash 342}, issn={0021-8693}, review={\MR{528579 (80m:16003)}}, }

\bib{JKP82}{article}{
  author = {Jespers, Eric}, author={Krempa, Jan},
  author={Puczy{\l}owski, Edmund R.},
  title = {On radicals of graded rings},
  journal = {Comm. Algebra},
volume = {10},
date = {1982},
  number = {17},
  pages={1849\ndash 1854},
  issn = {0092-7872},
  review={\MR{674695 (84c:16003b)}}, }

\bib{LM04}{article}{
    author = {Leroy, Andr{\'e}},
    author = {Matczuk, Jerzy},
     title = {On induced modules over {O}re extensions}, journal =
   {Comm. Algebra},
   volume = {32},
   date = {2004},
    number = {7},
    pages = {2743--2766},
    issn = {0092-7872},
     review={\MR{2099932 (2005g:16051)}}, }

\bib{MS99}{article}{
    author={Montgomery, S.},
    author={Schneider, H.-J.},
     title={Prime ideals in Hopf Galois extensions},
   journal={Israel J. Math.},
    volume={112},
      date={1999},
     pages={187\ndash 235},
      issn={0021-2172}, review={\MR{1715517 (2001e:16075)}},
}

\bib{Nor05}{article}{
    author={Nordstrom, H.},
     title={Associated primes over {O}re extensions},
   journal={J. Algebra},
    volume={286},
      date={2005},
    number={1},
     pages={69\ndash 75},
      issn={0021-8693}, review={\MR{2124809 (2006c:16049)}},
}

\bib{NvO82}{book}{
    author={N{\u{a}}st{\u{a}}sescu, C.},
    author={van Oystaeyen, F.},
     title={Graded ring theory},
    series={North-Holland Mathematical Library},
    volume={28},
 publisher={North-Holland Publishing Co.},
     place={Amsterdam},
      date={1982},
     pages={ix+340},
      isbn={0-444-86489-X}, review={\MR{676974 (84i:16002)}},
}

\bib{Pas84}{article}{
    author={Passman, D. S.},
     title={Infinite crossed products
and group-graded rings},
   journal={Trans. Amer. Math. Soc.},
    volume={284},
      date={1984},
    number={2},
     pages={707\ndash 727}, review={\MR{743740 (85j:16012)}},
}

\bib{Pas89}{book}{
    author={Passman, D. S.},
     title={Infinite crossed products},
    series={Pure and Applied Mathematics},
    volume={135},
 publisher={Academic Press},
     place={San Diego},
      date={1989},
     pages={xii+468},
      isbn={0-12-546390-1}, review={\MR{0979094 (90g:16002)}},
}

\bib{PS77}{article}{
    author={Pearson, K. R.},
    author={Stephenson, W.},
     title={A skew polynomial ring over a Jacobson ring need not be a
            Jacobson ring},
   journal={Comm. Algebra},
    volume={5},
      date={1977},
    number={8},
     pages={783\ndash 794},
     review={\MR{0439873 (55 \#12754)}},
}


\end{biblist}
\end{bibdiv}

\end{document}